\let\oldsquare\square 
\renewcommand{\square}{\oldsquare}
\newcommand{\NN}{\mathbb{N}}
\newcommand{\CC}{\mathbb{C}}
\newcommand{\sh}{\mathcal{H}}
\numberwithin{equation}{section}
\newtheorem{theorem}{Theorem}[section]
\newtheorem{corollary}[theorem]{Corollary}
\newtheorem{proposition}[theorem]{Proposition}
\newtheorem{lemma}[theorem]{Lemma}
\theoremstyle{definition}
\newtheorem{definition}[theorem]{Definition}
\newtheorem{remark}[theorem]{Remark}
\let\originalleft\left
\let\originalright\right
\renewcommand{\left}{\mathopen{}\mathclose\bgroup\originalleft}
\renewcommand{\right}{\aftergroup\egroup\originalright}
\newcommand{\vertiii}{\vert\kern-0.3ex\vert\kern-0.25ex\vert}
\newcommand*{\N}{\ensuremath{\mathbb{N}}}
\newcommand*{\Z}{\ensuremath{\mathbb{Z}}}
\newcommand*{\R}{\ensuremath{\mathbb{R}}}
\newcommand{\eps}{\varepsilon}
\renewcommand*{\Re}{\ensuremath{\mathrm{Re\,}}}
\renewcommand*{\hat}{\widehat}
\newcommand{\G}{\mathbf{G}}
\newcommand{\Li}{\mathrm{Li}_{\frac{1}{2}}\,}
\DeclareSymbolFont{boldoperators}{OT1}{cmr}{bx}{n}
\DeclareMathOperator*{\argmin}{argmin}
\newcommand{\RR}{\mathbb{R}}
\newcommand{\e}{\varepsilon}
\newcommand\avsuminner[2]{%
	{\sbox0{$\m@th#1\sum$}%
		\vphantom{\usebox0}%
		\ooalign{%
			\hidewidth
			\smash{\,\rule[.23em]{8.8pt}{1.1pt} \relax}%
			\hidewidth\cr
			$\m@th#1\sum$\cr
		}%
	}%
}
\newcommand\avsuminnerr[2]{%
	{\sbox0{$\m@th#1\sum$}%
		\vphantom{\usebox0}%
		\ooalign{%
			\hidewidth
			\smash{\,\rule[.23em]{6pt}{0.7pt} \relax}%
			\hidewidth\cr
			$\m@th#1\sum$\cr
		}%
	}%
}
\def\Xint#1{\mathchoice
	{\XXint\displaystyle\textstyle{#1}}%
	{\XXint\textstyle\scriptstyle{#1}}%
	{\XXint\scriptstyle\scriptscriptstyle{#1}}%
	{\XXint\scriptscriptstyle\scriptscriptstyle{#1}}%
	\!\int}
\def\XXint#1#2#3{{\setbox0=\hbox{$#1{#2#3}{\int}$}
		\vcenter{\hbox{$#2#3$}}\kern-.5\wd0}}
\def\fint{\Xint-}
\newcommand{\negphantom}{\v@true\h@true\negph@nt} 
\newcommand{\neghphantom}{\v@false\h@true\negph@nt} 
\newcommand{\negph@nt}{\ifmmode\expandafter\mathpalette 
	\expandafter\mathnegph@nt\else\expandafter\makenegph@nt\fi} 
\newcommand{\makenegph@nt}[1]{%
	\setbox\z@\hbox{\color@begingroup#1\color@endgroup}\finnegph@nt} 
\newcommand{\finnegph@nt}{%
	\setbox\tw@\null 
	\ifv@ \ht\tw@\ht\z@\dp\tw@\dp\z@\fi \ifh@\wd\tw@-\wd\z@\fi\box\tw@} 
\newcommand{\mathnegph@nt}[2]{%
	\setbox\z@\hbox{$\m@th #1{#2}$}\finnegph@nt} 
\newcommand{\red}[1]{\textcolor{blue}{#1}}
\newcommand{\addperiod}[1]{#1.}
\titleformat*{\subsection}{\normalfont\large}
\titleformat{\subsubsection}[runin]
{\bfseries}
{\thesubsubsection.}
{0.5em}
{\addperiod}
\titleformat*{\subsubsection}{\bfseries}
\titleformat*{\paragraph}{\bfseries}
\titleformat*{\subparagraph}{\large\bfseries}
\title{\bf \Large Interaction energies in paranematic colloids}
\author{Dmitry Golovaty
	\thanks{Department of Mathematics, University of Akron.
		{\footnotesize \href{mailto:dmitry@uakron.edu}{dmitry@uakron.edu}.}
	}
	\and 
	Jamie Taylor
	\thanks{Department of Mathematics, CUNEF University.
		{\footnotesize \href{mailto:jamie.taylor@cunef.edu}{jamie.taylor@cunef.edu}.}
	}
 \and
	Raghavendra Venkatraman
	\thanks{Courant Institute of Mathematical Sciences, New York University.
		{\footnotesize \href{mailto:raghav@cims.nyu.edu}{raghav@cims.nyu.edu}.}
	}\and 
Arghir Zarnescu
\thanks{Basque Center for Applied Mathematics, Ikerbasque Foundation and "Simion Stoilow" Institute of the Romanian Academy.
	{\footnotesize \href{mailto:azarnescu@bcamath.org}{azarnescu@bcamath.org}.}
}
}
\date{\today}
\begin{document}

\maketitle
	
\begin{abstract}
We consider a 2D system of colloidal particles embedded in a paranematic---an isotropic phase of a nematogenic medium above the temperature of the nematic-to-isotropic transition. In this state, the nematic order is induced by the boundary conditions in a narrow band around each particle and it decays exponentially in the bulk. 

We develop rigorous asymptotics of the linearization of the appropriate variational model that allow us to describe weak far-field interactions between the colloidal particles in two dimensional paranematic suspensions. We demonstrate analytically that decay rates of solutions to the full nonlinear and linear problems are similar and verify numerically that the interactions between the particles in these problems have similar dependence on the distance between the particles.  We go beyond the existing literature by considering the next order term in the expansion and discover that the interaction can be either repulsive or attractive. 
Finally, we perform Monte-Carlo simulations for a system of colloidal particles in a paranematic and describe the statistical properties of this system.
\end{abstract}
	
	\setcounter{tocdepth}{2}  
	\tableofcontents
		\section{Introduction and main results}
	
 
We aim to initiate the study of interaction energies between colloidal particles in a nematic liquid crystal environment. There exists a significant body of physics literature on this topic (see for instance \cite{smalyukh2018liquid,senyuk2019high,smalyukh2020knots}), mostly based on simulations of certain variational models. The analytical intuition behind these interactions follows ideas developed in the seminal paper \cite{poulin1997novel}. In this work, interactions between colloidal particles are established based on a suitable linearisation at infinity and on formal analogies with a classical theme, namely interactions of electrostatic multipoles. A rigorous understanding of these interactions is still missing in the case of several particles, while for the case of a single particle it was considered in the recent work \cite{alama2023far}.
	
The main goal in this paper is to provide a rigorous underpinning to the intuition developed in the physical literature, aiming to obtain explicit estimates quantifying the interaction in the case of several particles expressed in terms of the geometric and material parameters of the problem. The models typically used to describe this physical setting are nonlinear but following formal ideas in \cite{PhysRevE.60.4210,PhysRevE.66.041705}, we reduce the problem to the linearization around the isotropic state and discuss the precise analytical meaning in which the solution to the resulting linear problem approximate the minimizers of the nonlinear problem. Further, we conduct the detailed analytical study of the linear problem, and then show via numerical experiments that the nonlinear version of the problem shares a number of qualitative features with our linear analysis. 
	
In the long-term, we will be interested in understanding a Landau-de Gennes model of nematic liquid crystals. The main features of this model---based on a tensor-valued order parameter---are presented in Appendix~\ref{sec:LDGmodel}. In the current paper we focus on a simpler, vector-valued model in two dimensions that retains the relevant features of the Landau-de Gennes approach. To this end, suppose that the liquid crystal is described by  $u:\Omega\to\R^2$, where $\Omega\subset\R^2$ is  an open, smooth and not necessarily bounded domain that models the container occupied by the nematic liquid crystal. Let 
	\begin{equation}
		\label{eq:pot0}
		W(u)=k(T){|u|}^2-2{|u|}^4+{|u|}^6
	\end{equation}
be the bulk potential, the minima of which describe a physical system that may undergo a phase transition at some critical temperature $T=T^*$, i.e. in mathematical terms, the type and number of minima change at this temperature.  

An examination of $W$ reveals that it has exactly one minimum at the isotropic state $u=0$ when $k(T)>\frac{4}{3}.$ When $\frac{4}{3}\geq k(T)>1$ there is a global minimum at $u=0$ and a local minimum at $|u|=\alpha(T)$ that represents a metastable ordered state. When $k(T)=1$ both minima have equal depth, while $|u|=\alpha(T)$ and $u=0$ become the global minimum and a local minimum, respectively, when $1>k(T)>0.$ When $k(T)<0,$ the circle $|u|=\alpha(T)$ is the global minimal set, while $u=0$ is the local maximum of $W$. We call the temperature $T^*$ satisfying $k(T^*)=1$ the temperature of the phase transition between the isotropic and the ordered states. 
	
We are interested in the behavior of minimizers of the functional 
\begin{equation}
    \label{eq:pot}
	E_\e(u,\Omega)=\int_\Omega{\frac{1}{2}{|\nabla u|}^2+\frac{1}{4\e^4}W(u)},
\end{equation}
where we assume that $u\in H^1(\Omega)$ satisfies Dirichlet boundary data on $\partial\Omega$. We will be focused on domains $\Omega$ that are exterior to a collection of colloidal particles, and seek to understand the inter-particle interactions as mediated by the background ordered state. As a first step in this program, we are interested in a so-called \emph{paranematic} regime when $k(T)>\frac{4}{3}.$ In this regime, the potential $W$ is convex and has a single minimum at the isotropic state $u=0.$ Nonetheless, whilst the ground state is isotropic in the bulk, some residual nematic ordering may still be induced by the boundary conditions. {In the remainder of this paper we fix $k(T)>\frac{4}{3}$ and refer to \eqref{eq:pot} as the {\it paranematic energy functional.}}

Note that interactions between spherical particles immersed in an isotropic phase of a nematogenic fluid were investigated in the physical literature in the past by considering formal asymptotics \cite{PhysRevE.60.4210}-\nocite{PhysRevE.61.2831,PhysRevE.71.062701,PhysRevLett.86.3915,galatola2003interaction}\cite{PhysRevE.66.041705} in three dimensions. Here we will focus instead on rigorous understanding of the regime of two spherical colloids in $\R^2$ when the domain is the whole space. To fix ideas, we  suppose thus that there are two identical, spherical colloidal particles~$\hat{B}_1^\e$ and ~$\hat{B}^\e_2,$ each of radius~$1$, that are separated by distance $2b\e^2 > 0.$ We also take $\Omega=\R^2\setminus \overline{\hat{B}_1^\e \cup \hat{B}_2^\e}$ {  and assume that the admissible competitors satisfy the Dirichlet conditions $u = g_i$ on $\partial \hat{B}_i^\e$ for $i=1,2$. Under the paranematic interaction energy between the two particles, we understand the difference $$E_{int}:=\min E_\e(u,\Omega)-\min E_\e(u,\R^2 \setminus\hat{B}_1^\e)-\min E_\e(u, \R^2 \setminus\hat{B}_2^\e), $$
where the second and the third minima  taken among the competitors satisfying the boundary conditions $u = g_1$ on $\partial \hat{B}_1^\e$ and $u = g_2$ on $\partial \hat{B}_2^\e,$ respectively, represent the {\it self-energies} of the particles.}  
	
We show in Proposition~\ref{t.nonlinear} that in the paranematic regime, the unique solution to the nonlinear Euler-Lagrange problem
\begin{equation} \label{e.PDE.0}
	\begin{aligned}
			\Delta u &= \frac{1}{4\e^4}\nabla_uW(u), \quad u \in H^1\left(\R^2 \setminus \overline{\hat{B}_1^\e \cup \hat{B}_2^\e}\right)\\
			u & = g_1 \qquad \mbox{ on } \partial \hat{B}_1^\e\\
			u &= g_2 \qquad \mbox{ on } \partial \hat{B}_2^\e\,. 
	\end{aligned}
\end{equation}
for \eqref{eq:pot} {\em has the same rate of decay} as that of a solution of the corresponding linearization of \eqref{e.PDE.0} around the state $u\equiv0,$ namely
	\begin{equation} \label{e.PDE.1}
		\begin{aligned}
			\Delta u &= \frac{k(T)}{2\e^4}u, \quad u \in H^1\left(\R^2 \setminus \overline{\hat{B}_1^\e \cup \hat{B}_2^\e}\right)\\
			u & = g_1 \qquad \mbox{ on } \partial \hat{B}_1^\e\\
			u &= g_2 \qquad \mbox{ on } \partial \hat{B}_2^\e\,. 
		\end{aligned}
	\end{equation}
This observation allows us to conjecture that far-field paranematic-mediated interactions between two particles in a nematogenic medium should depend on the distance between the particles in a way similar to that for the particles in the corresponding linear problem.
 \begin{comment}
     To be precise, we prove
	\begin{theorem} \label{t.nonlinear}
		For every $\e>0,$ suppose that $u_\e$ and $v_\e$ solve \eqref{e.PDE.0} and \eqref{e.PDE.1}, respectively, subject to the same boundary conditions. Then there exists a constant $C>0,$ indepenent of $\e$, such that
		\begin{equation} \label{e.claim1}
			\left\|u_\e-v_\e\right\|_{L^2(\R^2 \setminus \hat{B}^\e_1 \cup \hat{B}^\e_2)}\leq C\e\left\|g\right\|_{L^2(\partial \hat{B}^\e_1 \cup \partial \hat{B}^\e_2)}\,,
		\end{equation}  
  and 
  \begin{equation}
      \label{e.claim2}
      \|\nabla (u_\e - v_\e)\|_{L^2(\R^2 \setminus \hat{B}^\e_1 \cup \hat{B}^\e_2) } \leqslant \frac{C}{\e}\|g\|_{L^2(\partial \hat{B}^\e_1 \cup \partial \hat{B}^\e_2)}\,.
  \end{equation}
	\end{theorem}
 \begin{remark}
		\label{r.general}
		As we will demonstrate in the sequel, the specifics of the geometry in \eqref{e.PDE.0} (or \eqref{e.PDE.1}) is not at all necessary, and is only for ease of stating the result. We will explain how the conclusions of the above theorem stay valid for more general geometries that permit for more than two, possibly non-spherical particles, of varying sizes, etc.
	\end{remark}
 \end{comment}
	
By rescaling, we can assume that $k(T)=2$ in \eqref{eq:pot0}, hence the linear PDE we will consider in the sequel is 
\begin{equation} \label{e.PDE}
	\begin{aligned}
			\Delta U &= \frac{1}{\e^4}U, \quad U \in H^1\left(\R^2 \setminus \overline{\hat{B}_1^\e \cup \hat{B}_2^\e}\right)\\U & = G_1 \qquad \mbox{ on } \partial \hat{B}_1^\e\\
			U &= G_2 \qquad \mbox{ on } \partial \hat{B}_2^\e\,. 
		\end{aligned}
\end{equation}
It is clear that for the equation~\eqref{e.PDE}, the vectorial nature of $U$ is unimportant and, therefore, whenever possible we will assume $U$ to be a scalar. 
	
{ For concreteness, we set $\hat{B}_1^\e$ and $\hat{B}_2^\e$ to be open unit disks centered at $(0, 1 + b\e^2)$ and $(0, -1 - b\e^2),$ respectively, so that the distance of separation between the disks is $2b \e^2 > 0.$ To understand this choice of geometry, note that the solutions of \eqref{e.PDE} satisfying the nonzero Dirichlet data on $\partial \hat{B}_1^\e \cup \partial \hat{B}_2^\e$, decay exponentially fast within the distance~$\sim\e^2$ away from the boundaries of the disks. 

Assuming that $\e\ll1$ and that two disks are on the distance~$O(1)$ from each other, e.g., when $b\sim\frac1{\e^2}$, the solutions of \eqref{e.PDE} are non-vanishing within two narrow ~$\e^2-$wide non-overlapping "coronas" surrounding the disks. In this regime, we can think of the disks as not paranematically interacting. Alternatively, when $b\sim1,$ the particles are close to touching, their coronas intersect and the energy of the resulting paranematic interaction is comparable to their self-energies.

In what follows we will focus on the intermediate regime when $\e^2\ll b\e^2\ll1$. In this case, the particles are almost touching and their exponentially decaying paranematic coronas overlap significantly (i.e.,~$b\e^2\ll1)$, but the overlap occurs at a lengthscale that is much larger than the screening lengthscale of each corona (i.e.,~$b\e^2 \gg \e^2)$. Our main interest will be on estimating the energy of paranematic interaction between the disks which in this case can be thought of as a lower order correction to the self-energy of the particles. Note that we are not interested in computing the limit of the energy as $\e\to0$ and $b\to\infty$.} 
	
It will often be convenient to rescale the problem via the change of variables $x' = \frac{x}{\e^2}, y' = \frac{y}{\e^2}$ and subsequently drop the primes. Then, setting $B_1^\e $ to be the disk of radius $\frac{1}{\e^2}$ centered at $(0,\frac{1}{\e^2} + b)$ and $B_2^\e$ to be the disk of radius $\frac{1}{\e^2}$ centered at $(0, - \frac{1}{\e^2} - b),$ with a slight abuse of notation, we have

\begin{equation} \label{e.PDEblownup}
		\begin{aligned}
			\Delta u &= u, \quad\quad u\in H^1\left(\R^2 \setminus \overline{B_1^\e \cup B_2^\e}\right)\\
			u & = g_1 \qquad \mbox{ on } \partial B_1^\e\\
			u &= g_2 \qquad \mbox{ on } \partial B_2^\e\,.
		\end{aligned}
	\end{equation}

In the above, functions with lower case letters $(u,g_1, g_2)$ represent scaled versions of their upper-case counterparts $(U,G_1,G_2)$. We point out that in the blown up variables, the separation between the disks is $2b > 0.$ 

	
For the rescaling as above, we observe that the natural quadratic energies associated to the two settings are equal: 
	\begin{equation*}
		\frac12\int_{\R^2 \setminus \overline{\hat{B}_1^\e \cup \hat{B}_2^\e}} \biggl(|\nabla U|^2 + \frac{1}{\e^4} U^2 \biggr)\,dx = \frac12\int_{\R^2 \setminus \overline{B_1^\e \cup B_2^\e}} \biggl(|\nabla u|^2 + u^2\biggr) \,dx\,.
	\end{equation*}
Note that, although formally these energies can be thought of as a leading order approximation of \eqref{eq:pot} when the supremum norm of $u$ is small, this is not true in the current case as the boundary data $g_i,$ $i=1,2$ is of order $1.$

It will be shown by direct energy comparison with a competitor, that the energy of the unique solution to~\eqref{e.PDEblownup} given by
	\begin{equation}\label{def:Feps}
		F_\e(u) := \frac12\int_{\Omega_\e} \Bigl(|\nabla u|^2 + u^2\Bigr) \,dx\,, \quad \Omega_\e := \R^2 \setminus (B_1^\e \cup B_2^\e)\,. 
	\end{equation}
satisfies 
	\begin{equation} \label{e.competitor}
		F_\e(u) \leqslant \frac{C}{\e^2}\|g\|_{L^2}^2\,. 
	\end{equation}
For the linear problem, the goal in this paper is to give a precise energy expansion of the first two terms of the minimum energy $E_\e(u),$ in terms of the parameter $\e,$ and quantify the expansion. To be precise, let us note that the problems \eqref{e.PDE} and \eqref{e.PDEblownup} are associated with variational principles, and the solutions to these PDEs arise as unique minimizers of strictly convex energies. Focusing on \eqref{e.PDEblownup} with the associated energy $F_\e(u),$ we set 
	\begin{equation}\label{def:keps}
		\kappa_\e (g_1,g_2) := \min \Bigl\{  F_\e(w) : w \in H^1(\Omega_\e),  w = g_i \,\, \mbox{ on } \partial B_i^\e\Bigr\}\,. 
	\end{equation}
Our first main result, to be provided in Section~\ref{sec:twoparticlesconst}	 concerns the first two terms in an asymptotic expansion of $\kappa_\e(g_1,g_2)$ in powers of $\e.$ and for constant boundary conditions $g_1,g_2\in\R$. The leading terms are of order $O(\e^{-2})$, and correspond to the energy of each individual particle. However, due to the presence of two particles (rather than one), the particles interact, and there is a correction to the leading order energy which occurs at order $O(\e^{-1}).$ Computing this interaction energy exactly is the main contribution of our work. More precisely, in Subsection~\ref{ss.two.const} we will establish

\begin{theorem}
		\label{t.2balls.constbc}
Let $B_1^\eps=\{x\in\R^2, d(x,(0,\eps^{-2}+b))<\eps^{-2}\}$ and $B_2^\eps=\{x\in\R^2, d(x,(0,-\eps^{-2}-b))<\eps^{-2}\}.$ For constant boundary conditions $g_1,g_2\in \R$ consider the energy $F_\eps$ defined as in \eqref{def:Feps} and its minimum $\kappa_\eps$ as defined in \eqref{def:keps}.  {Then, there exists a constant~$C$ independent of~$\e$ and~$b,$ such that for~$b,\e>0$ such that~$b \gg 1,\  b \e^2 \ll 1,$} we have the  estimate
		\begin{equation}
			\label{e.exp.2balls.constbc}
			\left| \kappa_\e (b) - { \frac{\pi}{\e^2} (g_1^2 + g_2^2) +  2g_1 g_2 \frac{e^{-2b}\sqrt{\pi}}{\e} } \right| \leqslant C \frac{e^{-4b} + b\e^2 e^{-2b}}{\e}=o\left(\frac{e^{-2b}}{\e}\right)
		\end{equation}
\end{theorem}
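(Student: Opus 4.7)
The approach is to construct an explicit near-minimizer based on superposed single-particle solutions, exploit the variational structure via Green's identity, and extract the interaction by Laplace-type asymptotics on Bessel kernels. For $i = 1, 2$, set $v_i(x) := g_i K_0(|x - c_i|)/K_0(1/\e^2)$, where $c_i$ denotes the center of $B_i^\e$ and $K_0$ is the modified Bessel function of the second kind. Each $v_i$ solves $\Delta v_i = v_i$ in $\R^2 \setminus \{c_i\}$, equals $g_i$ on $\partial B_i^\e$, and decays at infinity. Take $\tilde u := v_1 + v_2$ as the competitor. The boundary identity $F_\e(w) = \frac{1}{2}\int_{\partial \Omega_\e} w\, \partial_\nu w \, d\sigma$, valid whenever $\Delta w = w$ in $\Omega_\e$ and $w$ decays at infinity, converts the energy into a boundary integral. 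The diagonal (single-particle) contribution, computed using the Wronskian identity $(r K_0(r) K_1(r))' = -r(K_0(r)^2 + K_1(r)^2)$, gives $\pi R g_i^2 K_1(R)/K_0(R)$ per ball with $R := 1/\e^2$; invoking the large-argument asymptotics $K_n(R) \sim \sqrt{\pi/(2R)}\, e^{-R}$ yields the leading $\e^{-2}$ term in the theorem.

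The interaction cross-energy reduces, through Green's identity together with the divergence theorem $\int_{\partial B_1^\e} \partial_\nu v_2 \, d\sigma = -\int_{B_1^\e} \Delta v_2 \, dx = -\int_{B_1^\e} v_2 \, dx$ (using that $c_2 \notin \overline{B_1^\e}$, so $v_2$ is smooth there), to volume integrals of Bessel kernels over the opposite disc. I would then evaluate $\int_{B_1^\e} K_0(|x - c_2|)\, dx$ by Laplace's method. The integrand concentrates near the closest point $(0, b)$ of $\overline{B_1^\e}$ to $c_2$; parametrizing $x = (\xi, b + \eta)$ with the ball constraint $\eta \geq \xi^2 \e^2/2$ (capturing the curvature of $\partial B_1^\e$) and using $|x - c_2| = R + 2b + \eta + \xi^2 \e^2/2 + O(\e^4)$, the integral reduces to a Gaussian in $\xi$ together with an exponential in $\eta$. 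Both this volume integral and the analogous boundary integral $\int_{\partial B_1^\e} v_2\, d\sigma$ arising in the computation of $\int_{\partial \Omega_\e} u_\e \partial_\nu u_\e$ scale as $g_2 e^{-2b}\sqrt{\pi}/\e$; summing the symmetric contributions from both particles produces the interaction coefficient claimed in the theorem.

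For the two-sided bound, the upper inequality $\kappa_\e \leq F_\e(\tilde u)$ must be corrected because $\tilde u|_{\partial B_i^\e}$ differs from $g_i$ by the exponentially small quantity $v_j|_{\partial B_i^\e}$, $j \neq i$. Either by iterating the single-particle ansatz via Graf's addition theorem $K_0(r_2) = \sum_m K_m(D) I_m(r_1) \cos(m\theta_1)$, valid for $r_1 < D := |c_1 - c_2|$, or by adding a small Dirichlet correction supported near the contact point, one restores the boundary values at an energy cost of order $e^{-4b}/\e$. The matching lower bound then follows from the quadratic identity $F_\e(\tilde u) = F_\e(u_\e) + F_\e(\tilde u - u_\e)$, where $\tilde u - u_\e \in H^1_0(\Omega_\e)$ after the correction, and $F_\e(\tilde u - u_\e)$ is controlled by the $H^1$-norm of the residual, again of order $e^{-4b}/\e$. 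The main obstacle lies in sharpening these correction estimates to precisely the order $e^{-4b}/\e$: in particular, verifying that the higher angular multipoles in the Graf expansion, whose coefficients involve $I_n(R)K_n(D)/K_0(R)$ for $n \geq 1$, contribute only at $O(e^{-4b}/\e)$ requires careful bookkeeping of the asymptotics $I_n(R) \sim e^R/\sqrt{2\pi R}$ and of the $n$-dependent subleading corrections in the large-argument Bessel expansions.
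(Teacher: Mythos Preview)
Your overall framework parallels the paper's strategy---single-particle Bessel solutions, reduction to boundary integrals, Laplace asymptotics in the neck, and a competitor-based bound on the correction---and the error-control step (your Dirichlet correction $\eta$) is essentially the paper's Lemma~2.2. However, there is a genuine gap in how you extract the interaction term.

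You assert that the cross-energy of the superposition $\tilde u=v_1+v_2$ already carries the coefficient $4g_1g_2e^{-2b}\sqrt{\pi}/\e$, via the volume integral $\int_{B_1^\e}v_2$ together with the boundary integral $\int_{\partial B_1^\e}v_2$. In fact these two contributions \emph{cancel} to leading order. Integrating by parts in $\Omega_\e$ using $\Delta\Psi_2=\Psi_2$ gives
\[
\langle \Psi_1,\Psi_2\rangle_{H^1(\Omega_\e)}
=\int_{\partial B_1^\e}\partial_{\nu_1}\Psi_2
+\int_{\partial B_2^\e}\Psi_1\,\partial_{\nu_2}\Psi_2
=-\int_{B_1^\e}\Psi_2\;+\;\frac{K_1(\e^{-2})}{K_0(\e^{-2})}\int_{\partial B_2^\e}\Psi_1\,,
\]
and your own Laplace computation shows both terms on the right equal $e^{-2b}\sqrt{\pi}/\e$ with opposite signs (equivalently, in the neck one has $\nabla\Psi_1\cdot\nabla\Psi_2+\Psi_1\Psi_2\approx 0$ pointwise). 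Hence $F_\e(\tilde u)$ contains \emph{no} interaction at order $e^{-2b}/\e$. Your Pythagorean identity then only says $\kappa_\e=F_\e(\tilde u')-F_\e(\tilde u'-u_\e)$ with the last term $O(e^{-4b}/\e)$; but the interaction is hidden entirely in $F_\e(\tilde u')-F_\e(\tilde u)=\langle\tilde u,\eta\rangle_\e+F_\e(\eta)$, and the cross term $\langle\tilde u,\eta\rangle_\e=-\int_{\partial\Omega_\e}r\,\partial_\nu\tilde u$ (with $r$ the boundary residual) is itself of order $e^{-2b}/\e$ and must be computed, not discarded.

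The paper sidesteps this by working with the \emph{exact} decomposition $u_\e=\sum g_i(\Psi_i+Z_i)$, where $Z_i$ solves the PDE with $Z_i=0$ on $\partial B_i^\e$ and $Z_i=-\Psi_i$ on $\partial B_{\sigma(i)}^\e$. The off-diagonal energy $\langle\Psi_1+Z_1,\Psi_2+Z_2\rangle_\e$ is rewritten via Green's identity (Lemma~2.1, equation~\eqref{e.offdiag}) as $\int_{\partial B_2^\e}\partial_{\nu_2}\Psi_1-\int_{\partial B_2^\e}\Psi_1\partial_{\nu_2}\Psi_2-\int_{\partial B_1^\e}\Psi_2\partial_{\nu_1}Z_1$: here the first two boundary integrals \emph{reinforce} rather than cancel (note the minus sign, which arises because $Z_2$ enters with boundary value $-\Psi_2$), while the $Z_1$-term is $O(e^{-4b}/\e)$ by the competitor bound. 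Your route can be repaired by computing $\langle\tilde u,\eta\rangle_\e$ explicitly---this is where the $-2g_1g_2\int_{\partial B_i^\e}\Psi_j\,\partial_{\nu_i}\Psi_i$ terms appear and produce the interaction---but as written, the proposal locates the interaction in the wrong place.
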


\begin{remark} \label{r.dmitry}
In the other parameter regime that we will consider in Section~\ref{s.O1separation}, we let $\e \to 0^+$ for the fixed separation $b = O(1)$. Here we will only provide the formal asymptotics in order to keep the paper length manageable, although rigorous statements should be obtainable in this case as well. More specifically, in this case we have

\begin{multline*}
\kappa_\e=\frac{2\pi}{\e^2}\left(g_1^2+g_2^2\right)+\frac{\sqrt{\pi/2}}{\e}\left[2e^{-4b}+\frac{1}{\sqrt{2}}e^{-8b}\right](g_1^2+g_2^2)\\-\frac{4\sqrt{\pi}}{\e}\left[e^{-2b}+\frac{1}{\sqrt{3}}e^{-6b}\right]g_1g_2+O(1),
\end{multline*}

\end{remark}

Moving on to the more general case of non-constant boundary conditions, we will show in Section~\ref{ss.two.var} the following:

	\begin{theorem}
		\label{t.mt}

    Let $B_1^\eps=\{x\in\R^2, d(x,(0,\eps^{-2}+b))<\eps^{-2}\}$ and $B_2^\eps=\{x\in\R^2, d(x,(0,-\eps^{-2}-b))<\eps^{-2}\}.$  For smooth functions $\hat{g}_1,\hat{g}_2:\mathbb{S}^1\to{ \mathbb{R}},$ we define $g_i:\partial B_i^\eps\to{ \mathbb{R}}$ via $$ g_i(x):=\hat{g}_i \left(\e^2x-{(-1)}^{i+1}\left(1+\e^2b\right)\right),\quad i=1,2,$$and consider the energy $F_\eps$ defined as in \eqref{def:Feps} and its minimum $\kappa_\eps$ as defined in \eqref{def:keps}.  {Then, there exists~$C>0$ independent of~$\e,b$ such that for~$b,\e>0$ such that~$b \gg 1,\  b \e^2 \ll 1,$ we have}	
		\begin{align*}
			&\left| \kappa_\e(b) - { \frac{\pi}{\e^2} \Biggl(\fint_{\partial \mathbb{S}^1} |\hat{g}_1|^2 \,d\sh^1 + \fint_{\partial \mathbb{S}^1} |\hat{g}_2|^2\,d \sh^1\Biggr) +  \frac{2e^{-2b}\sqrt{\pi}}{\e}  g_1(p) g_2(q)}\right| \\
			&\quad \quad \leqslant C  \frac{(e^{-4b} +b\e^2e^{-2b} ) \left(\|\hat{g}_1\|_{H^{1} (\mathbb{S}^1)}^2 + \|\hat{g}_2\|_{H^1(\mathbb{S}^1)}^2\right)}{\e}\,,
		\end{align*}
	where $p \in \partial B_1^\e$ and $q \in \partial B_2^\e$ denote the points on the two respective circles that are closest to each other.
		\end{theorem}

Theorems~\ref{t.2balls.constbc} and \ref{t.mt} (and Remark~\ref{r.dmitry}) describe the fine asymptotic behavior at the level of energies, to the solutions to the simple linear PDE problem~\eqref{e.PDE.1}. The subtlety in~\eqref{e.PDE.1} lies in the fact that the (exterior) domain becomes singular when $\e \to 0^+$ and consists of two touching disks. In order to place our results in a broader context, let us note that linear PDE and systems with piecewise constant coefficients (with possibly high contrast) in domains exterior to two nearly touching obstacles has received a lot of attention \cite{t1,t2,t3,t4}. This literature concerns itself with gradient estimates on the solution to the PDE, and their character in the region of closest contact between the two touching inclusions. We think of these as describing the \emph{leading order} behavior in a pointwise sense, close to the contact points of the obstacles. In this paper we are instead interested in the \emph{next order interaction effects} that we capture through the careful asymptotic analysis of a more global quantity, namely, the energy associated to the PDE~\eqref{e.PDE.1}. We discover that the interaction can be either repulsive or attractive, unlike previously suggested in the physics literature where only attractive effects (flocculation) were predicted \cite{PhysRevE.60.4210,PhysRevE.61.2831,PhysRevE.71.062701}.

Further, in Section~\ref{sec:multiple}, we will briefly explain how the results for two-particle interactions can be extended to the case of several particles, under suitable assumptions.

In Section~\ref{sec:numerics} we will use numerics to explore the similarities and differences at the level of minimizers and energy scaling, between the solutions of the linear problem corresponding to the simple quadratic potential $\frac12 u^2$ in the energy $F_\e(u)$ and the solutions of the nonlinear problem corresponding to a more physical potential $W(u)$ in the paranematic regime.

 The results presented here provide first analytical steps towards rigorous understanding of multiple particles interactions. In order to offer a glimpse into the future explorations, we provide in Figure~\ref{fig.MHMC} some Monte Carlo simulations results based on the ideas developed here. These show configurations of several particles with boundary conditions having different topological degrees and the details are provided  in Section~\ref{sec:MonteCarlo}. 
 
\begin{figure}[ht]
\begin{subfigure}[h]{0.3\textwidth}
\begin{center}
\includegraphics[width=0.99\textwidth]{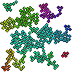}
\caption{Degree $2$}\label{subfig.MHMC.2}
\end{center}
\end{subfigure}\begin{subfigure}[h]{0.3\textwidth}
\begin{center}
\includegraphics[width=0.99\textwidth]{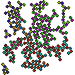}
\caption{Degree $3$}\label{subfig.MHMC.3}
\end{center}
\end{subfigure}
\begin{subfigure}[h]{0.3\textwidth}
\begin{center}
\includegraphics[width=0.99\textwidth]{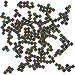}
\caption{Degree $5$}\label{subfig.MHMC.5}
\end{center}
\end{subfigure}
\caption{Some Monte-Carlo simulations for multiple particles}\label{fig.MHMC}
\end{figure} 

	\section{Two particles and constant boundary conditions}
\label{sec:twoparticlesconst}

 \label{ss.two.const}
	The focus of this section is on the case when the boundary conditions in \eqref{e.PDEblownup}  are constant, so that $g_1, g_2 \in \R.$ Recall that $\Omega_\e= \RR^2 \setminus \overline{B_1^\e \cup B_2^\e}$ is the domain exterior to two large balls of the radius $1/\e^2$ each and situated at distance $2b$ away from each other.  We will provide  an energy expansion in two cases:
	\begin{enumerate}
		\item that holds for a \emph{fixed} {$\e$ that is sufficiently small} and $b,$ {that is sufficiently large} {  provided~$b \e^2 \ll 1,$} and 
		\item that holds {for a fixed~$b > 0$, but} in the limit $\e \to 0^+,$ {so that the separation between the particles is of the order of~$\e^2.$ }
	\end{enumerate}

 The first case, to be treated in the next subsection will be studied rigorously, providing all the details, while for the other case, to be treated in Subsection~\ref{s.O1separation}  we will only provide the formal asymptotics, in order to shorten the presentation.
	
	\subsection{Interaction energies between particles when their separation satisfies: { $1 \ll b \ll \frac1{\e^2}$}}
	
	  The main result of this section is an expansion of the energy of $U$ in terms of $\e$, as stated in Theorem~\ref{t.2balls.constbc}.

The proof of Theorem \ref{t.2balls.constbc} is contained in a sequence of Lemmas. For $i=1,2,$ let 
	\begin{equation}
		\label{eq:psi}
		\Psi_i(x) := \frac{K_0 \big(|x - a_i^\e|  \big)}{K_0\big(\frac{1}{\e^2} \big)}\,, \quad x \in \Omega_\e
	\end{equation} where $K_0$ is a modified Bessel function of the second kind (see Appendix~\ref{s.app1} for details).
One can check that this is 	the solution of the single particle exterior problem
	\begin{equation}
	\label{eq:sipa}
		\begin{aligned}
			&\Delta \Psi_i =  \Psi_i \quad \quad \mbox{ in } \R^2\backslash B_i^\e,\\
			&\Psi_i = 1 \quad \quad \quad\  \mbox{ on } \partial B_i^\e
		\end{aligned}	
	\end{equation}

 Let us define $\sigma :\{1,2\} \to \{1,2\}$ via $\sigma(1) = 2, \sigma(2) = 1.$ Suppose that $Z_i\in H^1\left(\Omega_\e\right)$ solve 
	\begin{equation} \label{e.ZjPDE}
		\begin{aligned}
			&\Delta Z_i =  Z_i \quad \quad \mbox{ in } \Omega_\e,\\
			&Z_i = 0 \quad \ \quad \quad \mbox{ on } \partial B_i^\e, \\
			&Z_i = -\Psi_{i} \quad \quad \mbox{ on } \partial B_{\sigma(i)}^\e
		\end{aligned}
	\end{equation}
	for $i=1,2.$ Then it is easily seen that the unique solution to \eqref{e.PDEblownup} is given by 
	\begin{align} \label{e.sol}
		U = g_1 (\Psi_1 + Z_1) + g_2 (\Psi_2 + Z_2). 
	\end{align}
	 If for any $\xi_1, \xi_2 \in H^1(\Omega_\e)$ we define
	\begin{align*}
		&\langle \xi_1, \xi_2\rangle_\e := \langle \xi_1, \xi_2\rangle_{H^1(\Omega_\e)}= \int_{\Omega_\e} \xi_1 \xi_2 + \nabla \xi_1 \cdot \nabla \xi_2
	\end{align*}
	and denote $\|\xi\|_\e^2 := \langle \xi,\xi \rangle_\e,$ we observe that  
	\begin{align*}
		F_\e(U) = \frac 12\|U\|_{\e}^2.
	\end{align*}
Then, 
	\begin{equation} \label{e.decompofenergy}
		\begin{aligned}
			&2F_\e(U) = \|U\|_{\e}^2 = g_1^2 \|\Psi_1 + Z_1\|_\e^2 + 2g_1 g_2 \langle \Psi_1 + Z_1, \Psi_2 + Z_2\rangle_\e + g_2^2 \|\Psi_2 + Z_2\|_\e^2\\
			&\qquad = \begin{pmatrix}
				g_1 & g_2
			\end{pmatrix} \begin{pmatrix}
				\|\Psi_1 + Z_1\|_\e^2 & \langle \Psi_1 + Z_1 , \Psi_2 + Z_2\rangle_\e\\
				\langle \Psi_1 + Z_1 , \Psi_2 + Z_2\rangle_\e & \|\Psi_2 + Z_2\|_\e^2
			\end{pmatrix} \begin{pmatrix}
				g_1 \\
				g_2
			\end{pmatrix}.
		\end{aligned}
	\end{equation}
	
	Our first lemma expresses each of the terms in the above matrix in terms of certain boundary integrals. Naturally, this is done using integration by parts-- for this purpose we let $\nu_i,i = 1,2$ denote unit normals that point \emph{towards the centers of the discs} $B_i^\e.$ In particular, for the exterior domain $\Omega_\e,$ these represent outward unit normals. We will collectively refer to both these normals (i.e., as outward unit normal to $\partial \Omega_\e$) via $\nu$.\\
	\begin{lemma}
		\label{l.boundaryintegrals1}
		We have the following identities
		\begin{align} \label{e.diag}
			\|\Psi_2 + Z_2\|_\e^2 = \|\Psi_1 + Z_1\|_\e^2 = - \frac{2\pi}{\e^2} \frac{K_0^\prime \bigl( \frac{1}{\e^2} \bigr)}{K_0\bigl( \frac{1}{\e^2}\bigr)} - \int_{\partial B_2^\e} \Psi_1 \biggl( \frac{\partial \Psi_1}{\partial \nu_2} + \frac{\partial Z_1}{\partial \nu_2} \biggr)\,d \sh^1 ,
		\end{align}
		and  
		\begin{align} \label{e.offdiag}
			\langle \Psi_1 + Z_1, \Psi_2 + Z_2\rangle_\e =\int_{\partial B_2^\e} \frac{\partial \Psi_1}{\partial \nu_2}\,d \sh^1 - \int_{\partial B_2^\e} \Psi_1 \frac{\partial \Psi_2}{\partial \nu_2}\,d \sh^1 - \int_{\partial B_1^\e}\Psi_2  \frac{\partial Z_1}{\partial \nu_1}\,d \sh^1. 
		\end{align}
	\end{lemma}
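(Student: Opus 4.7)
The plan is to establish both identities by integrating by parts and exploiting the specific boundary conditions satisfied by $\Psi_i$ and $Z_i$, together with the fact that all four functions satisfy the screened PDE $\Delta w = w$ on $\Omega_\e$. The key computational identity is that, whenever $\Delta u = u$ in $\Omega_\e$ and $v \in H^1(\Omega_\e)$, integration by parts yields
\[
\langle u, v\rangle_\e = \int_{\Omega_\e} \bigl(uv - v\Delta u\bigr)\,dx + \int_{\partial \Omega_\e} v\, \partial_\nu u \, d\sh^1 = \int_{\partial \Omega_\e} v\, \partial_\nu u\, d\sh^1,
\]
with $\nu$ outward from $\Omega_\e$. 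The boundary terms at infinity vanish thanks to the exponential decay of $\Psi_i$ inherited from the asymptotics of $K_0$ (Appendix~\ref{s.app1}), and hence also of $Z_i$, which solves a Dirichlet problem for $-\Delta + 1$ with decaying interior forcing.

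For the diagonal identity \eqref{e.diag}, set $\xi := \Psi_1 + Z_1$. Then $\Delta \xi = \xi$ in $\Omega_\e$, with $\xi = 1$ on $\partial B_1^\e$ and $\xi = 0$ on $\partial B_2^\e$. Applying the identity above with $u = v = \xi$ reduces the norm to $\int_{\partial B_1^\e} \partial_{\nu_1}(\Psi_1 + Z_1)\,d\sh^1$. I would compute the $\Psi_1$ contribution directly from the explicit radial form \eqref{eq:psi}: since $\nu_1$ points toward the center of $B_1^\e$, $\partial_{\nu_1}\Psi_1 = -K_0^\prime(1/\e^2)/K_0(1/\e^2)$, and integration over the circle of circumference $2\pi/\e^2$ yields the first term on the right of \eqref{e.diag}. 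For the $Z_1$ contribution I would apply Green's second identity to $\Psi_1$ and $Z_1$ on $\Omega_\e$; since both satisfy $\Delta = $ itself, the interior integrand vanishes, giving
\[
\int_{\partial \Omega_\e}\bigl(Z_1\,\partial_\nu \Psi_1 - \Psi_1\,\partial_\nu Z_1\bigr)\,d\sh^1 = 0.
\]
Plugging in $Z_1 = 0$ on $\partial B_1^\e$, $Z_1 = -\Psi_1$ on $\partial B_2^\e$, and $\Psi_1 = 1$ on $\partial B_1^\e$ rearranges to $\int_{\partial B_1^\e} \partial_{\nu_1} Z_1\,d\sh^1 = -\int_{\partial B_2^\e} \Psi_1(\partial_{\nu_2}\Psi_1 + \partial_{\nu_2}Z_1)\,d\sh^1$, which is the second term in \eqref{e.diag}. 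The equality $\|\Psi_1 + Z_1\|_\e^2 = \|\Psi_2 + Z_2\|_\e^2$ then follows from the reflection symmetry $y \mapsto -y$, which swaps $B_1^\e$ with $B_2^\e$ and $(\Psi_1, Z_1)$ with $(\Psi_2, Z_2)$.

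For the off-diagonal identity \eqref{e.offdiag}, I would bilinearly expand
\[
\langle \Psi_1 + Z_1, \Psi_2 + Z_2\rangle_\e = \langle \Psi_1,\Psi_2\rangle_\e + \langle \Psi_1,Z_2\rangle_\e + \langle Z_1,\Psi_2\rangle_\e + \langle Z_1,Z_2\rangle_\e,
\]
and for each term choose which function plays the role of $u$ in the boundary-integral identity so that the \emph{other} function has the simplest possible boundary values. Concretely, for $\langle \Psi_1,\Psi_2\rangle_\e$ I would integrate against $\partial_\nu \Psi_1$, using $\Psi_2 = 1$ on $\partial B_2^\e$; for $\langle \Psi_1,Z_2\rangle_\e$ I would integrate against $\partial_\nu \Psi_1$, using $Z_2 = 0$ on $\partial B_2^\e$ and $Z_2 = -\Psi_2$ on $\partial B_1^\e$; for $\langle Z_1,\Psi_2\rangle_\e$ I would integrate against $\partial_\nu \Psi_2$ and use the boundary values of $Z_1$; and for $\langle Z_1, Z_2\rangle_\e$ I would integrate against $\partial_\nu Z_1$ and use $Z_2$'s boundary values. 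Summing the four contributions, the pair of terms $\pm \int_{\partial B_1^\e} \Psi_2\,\partial_{\nu_1}\Psi_1\,d\sh^1$ cancel, leaving precisely the three terms on the right of \eqref{e.offdiag}.

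The only genuine technical point is the justification of integration by parts on the unbounded domain $\Omega_\e$, which follows from the uniform exponential decay of $\Psi_i$ and $Z_i$ at infinity. Beyond that, the proof is bookkeeping of signs (the normals $\nu_i$ point \emph{into} each ball) and careful choice of which factor to integrate against in each bilinear term.
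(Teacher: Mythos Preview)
Your proposal is correct and follows essentially the same approach as the paper: expand bilinearly, integrate by parts using $\Delta w = w$ to kill the bulk terms, and track the boundary contributions using the specific boundary values of $\Psi_i$ and $Z_i$. The only cosmetic difference is in the diagonal term: you first reduce $\|\Psi_1+Z_1\|_\e^2$ to $\int_{\partial B_1^\e}\partial_{\nu_1}(\Psi_1+Z_1)$ in one shot and then invoke Green's second identity to convert $\int_{\partial B_1^\e}\partial_{\nu_1}Z_1$, whereas the paper expands $\|\Psi_1\|_\e^2 + 2\langle\Psi_1,Z_1\rangle_\e + \|Z_1\|_\e^2$ and integrates by parts term by term; both routes land on the same boundary integrals after the same cancellations.
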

	\begin{proof}
		{To prove \eqref{e.diag},} we compute $\|\Psi_1 + Z_1\|_\e^2,$ with the other term being symmetrical. Computing, and using the PDE and boundary conditions satisfied by $\Psi_1 $ and $Z_1,$ we observe
		\begin{equation} \label{e.diag1}
			\begin{aligned}
				&\| \Psi_1 + Z_1\|_\e^2 = \|\Psi_1\|_\e^2 + 2 \langle \Psi_1 , Z_1\rangle_\e + \|Z_1\|_\e^2 \\
				&\quad \quad = \int_{\Omega_\e} \left(\Psi_1^2 + |\nabla \Psi_1|^2 + 2 \Psi_1 Z_1 + 2\nabla \Psi_1 \cdot \nabla Z_1 + |Z_1|^2 + |\nabla Z_1|^2 \right)\,dx\\
				&\quad \quad = \int_{\Omega_\e} \Psi_1 \left( \Psi_1 - \Delta \Psi_1 \right)\,dx + \int_{\partial \Omega_\e} \Psi_1 \frac{\partial \Psi_1}{\partial \nu} \,d \sh^1\\
				&\quad \quad \ + 2 \int_{\Omega_\e} Z_1 \left( \Psi_1 - \Delta \Psi_1 \right) \,dx + 2 \int_{\partial \Omega_\e} Z_1 \frac{\partial \Psi_1}{\partial \nu}\,d \sh^1\\
				&\quad \quad \  + \int_{\Omega_\e} Z_1 (Z_1 - \Delta Z_1)\,dx + \int_{\partial \Omega_\e} Z_1 \frac{\partial Z_1}{\partial \nu}\,d \sh^1\\
				&\quad \quad =\int_{\partial B_1^\e} \frac{\partial \Psi_1}{\partial \nu_1}\,d \sh^1 + \int_{\partial B_2^\e}\Psi_1 \frac{\partial \Psi_1}{\partial \nu_2}\,d \sh^1 - 2 \int_{\partial B_2^\e} \Psi_1 \frac{\partial \Psi_1}{\partial \nu_2} \,d \sh^1 - \int_{\partial B_2^\e} \Psi_1 \frac{\partial Z_1}{\partial \nu_2}\,d \sh^1\\
				&\quad \quad = \int_{\partial B_1^\e} \frac{\partial \Psi_1}{\partial \nu_1}\,d \sh^1 -\int_{\partial B_2^\e}\Psi_1 \left( \frac{\partial \Psi_1}{\partial \nu_2} + \frac{\partial Z_1}{\partial \nu_2}\right)\,d \sh^1\\
				&\quad \quad = -\frac{2\pi}{\e^2} \frac{K_0^{\prime} \left(\frac{1}{\e^2} \right)}{K_0\left( \frac{1}{\e^2}\right)}-\int_{\partial B_2^\e}\Psi_1 \left( \frac{\partial \Psi_1}{\partial \nu_2} + \frac{\partial Z_1}{\partial \nu_2}\right)\,d \sh^1. 
			\end{aligned}
		\end{equation}
		Similarly, {to prove \eqref{e.offdiag},} we notice
		\begin{equation}
			\begin{aligned}
				&\langle \Psi_1 + Z_1,\Psi_2 + Z_2\rangle_\e = \int_{\Omega_\e} (\Psi_1 + Z_1) (\Psi_2 + Z_2) + (\nabla \Psi_1 + \nabla Z_1)\cdot (\nabla \Psi_2 + \nabla Z_2)\,dx \\
				&\quad = \int_{\Omega_\e}  (\Psi_1 - \Delta \Psi_1) \Psi_2 \,dx + \int_{\partial \Omega_\e} \frac{\partial \Psi_1}{\partial \nu}\Psi_2 \,d\sh^1 \\
				&\quad \quad + \int_{\Omega_\e} (\Psi_1 - \Delta \Psi_1)Z_2
				\,dx + \int_{\partial \Omega_\e} Z_2\frac{\partial \Psi_1}{\partial \nu}\, \sh^1    \\
				&\quad \quad + \int_{\Omega_\e} (\Psi_2 - \Delta \Psi_2)Z_1 \,dx + \int_{\partial \Omega_\e} Z_1 \frac{\partial \Psi_2}{\partial \nu}\,d \sh^1 \\
				&\quad \quad + \int_{\Omega_\e} (Z_1 - \Delta Z_1)Z_2 \,dx + \int_{\partial \Omega_\e} Z_2 \frac{\partial Z_1}{\partial \nu}\,d \sh^1\\
				&\quad = \int_{\partial B_1^\e} \frac{\partial \Psi_1}{\partial \nu_1}\Psi_2 \,d \sh^1 + \int_{\partial B_2^\e} \frac{\partial \Psi_1}{\partial \nu_2}\,d \sh^1 \\
				&\quad \quad - \int_{\partial B_1^\e} \Psi_2 \frac{\partial \Psi_1}{\partial \nu_1} \,d \sh^1 - \int_{\partial B_2^\e} \Psi_1 \frac{\partial \Psi_2}{\partial \nu_2}\,d \sh^1 - \int_{\partial B_1^\e}\Psi_2  \frac{\partial Z_1}{\partial \nu_1}\,d \sh^1\\
				&\quad = \int_{\partial B_2^\e} \frac{\partial \Psi_1}{\partial \nu_2}\,d \sh^1 - \int_{\partial B_2^\e} \Psi_1 \frac{\partial \Psi_2}{\partial \nu_2}\,d \sh^1 - \int_{\partial B_1^\e}\Psi_2  \frac{\partial Z_1}{\partial \nu_1}\,d \sh^1.
			\end{aligned}
		\end{equation}
	\end{proof}
	Next, we have a lemma that controls the normal derivative of the function $Z$ in $H^{-1/2}(\partial \Omega_\e)$ by the energy. The underlying subtlety, is of course, that the domain $\Omega_\e$ varies in $\e,$ and we must obtain estimates that are uniform in $\e.$ 
	\begin{lemma} \label{l.normal.der}
		The functions $Z_j$ have $H^1$ norms bounded by 
		\begin{align}
			\biggl\{ \int_{\Omega_\e} |Z_j|^2 + |\nabla Z_j|^2 \,dx \biggr\}^{\sfrac{1}{2}}{\lesssim \frac{1}{\sqrt{\e}}K_0(2b) {\lesssim \frac{e^{-2b}}{b\sqrt{\e}}}},
		\end{align}
		 where the last inequality holds for $b \gg 1$  and $\e^2 b\ll 1$.
	\end{lemma}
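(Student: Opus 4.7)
The plan is to appeal to the variational characterization of $Z_j$: since $Z_j$ is the unique minimizer of the convex energy $F_\e(u) = \frac{1}{2}\int_{\Omega_\e}(u^2 + |\nabla u|^2)\,dx$ over $H^1(\Omega_\e)$ functions subject to the Dirichlet data $u=0$ on $\partial B_j^\e$ and $u=-\Psi_j$ on $\partial B_{\sigma(j)}^\e$, minimality gives $\|Z_j\|_\e^2 \leq \|w\|_\e^2$ for every admissible competitor $w$. It therefore suffices to produce such a $w$ whose squared $H^1$-norm does not exceed $C \e^{-1} K_0(2b)^2$.

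A natural candidate is $w(x) := -\eta(x)\Psi_j(x)$, where $\eta \in C^\infty(\overline{\Omega_\e};[0,1])$ depends only on the distance to $\partial B_{\sigma(j)}^\e$, equals $1$ in a neighborhood of $\partial B_{\sigma(j)}^\e$, vanishes outside a tubular neighborhood of $\partial B_{\sigma(j)}^\e$ of fixed width (which is disjoint from $\partial B_j^\e$ because $2b \gg 1$), and has $\|\nabla \eta\|_\infty$ bounded independently of $\e$ and $b$. Then $w=0$ on $\partial B_j^\e$ and $w=-\Psi_j$ on $\partial B_{\sigma(j)}^\e$, so $w$ is admissible.

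The energy is then bounded by $\|w\|_\e^2 \leq C\int_{\supp \eta}(\Psi_j^2 + |\nabla \Psi_j|^2)\,dx$. Using $K_0' = -K_1$ together with the asymptotic $K_1(r)/K_0(r) \to 1$ as $r \to \infty$ (cf.~Appendix~\ref{s.app1}), we obtain $|\nabla \Psi_j| \leq C\Psi_j$ throughout $\supp \eta$, so the estimate reduces to controlling $\int_{\supp \eta} \Psi_j^2\,dx$. In local coordinates near $\partial B_{\sigma(j)}^\e$ (normal distance $r$ to $\partial B_{\sigma(j)}^\e$ and arc length along it), Taylor-expanding $|x-a_j^\e|$ near the closest point of $\partial B_{\sigma(j)}^\e$ to $a_j^\e$ shows that $\Psi_j$ is concentrated in an angular sector of width $O(\e)$ around that closest point, and the sharp asymptotic $K_0(\rho) \sim \sqrt{\pi/(2\rho)}\,e^{-\rho}$ converts the resulting integral into a Gaussian computation producing the factor $K_0(2b)^2/\e$.

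The main technical obstacle is to carry out the Taylor expansion and the accompanying Gaussian integration carefully enough that the geometric corrections (which depend on the curvature $\e^2$ of $\partial B_{\sigma(j)}^\e$ and on the separation $b$) can be absorbed into a constant uniform over the regime $\e \in (0,1/2]$ and $b \gg 1$; in particular, the algebraic prefactor in the asymptotic of $K_0$ must be retained in order to produce $K_0(2b)$ rather than the cruder bound $e^{-2b}$.
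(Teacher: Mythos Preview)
Your approach is correct and essentially the same as the paper's: both exploit the variational characterization of $Z_j$ and construct a competitor of the form $-\eta\,\Psi_j$ with a cutoff $\eta$ localized in a tubular neighborhood of $\partial B_{\sigma(j)}^\e$. The only differences are technical---the paper takes $\eta$ of transition width $\sim b$ as a function of $|x-a_{\sigma(j)}^\e|$ and then reduces the bulk integral to boundary integrals via $\Delta\Psi_j=\Psi_j$, whereas you use a fixed-width cutoff and the pointwise bound $|\nabla\Psi_j|\lesssim \Psi_j$ before carrying out the Gaussian computation directly.
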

	\begin{proof}
		We first make the observation that the functions $Z_j$ satisfying the PDE \eqref{e.ZjPDE} are the unique minimizers of the $H^1$ norm, subject to their own boundary conditions. Therefore, the desired estimate follows by the construction of a competitor and comparing energies. 
		Without loss of generality, we fix $j = 1$. Our competitor $\zeta \in H^1(\Omega_\e)$ must be constructed satisfying the boundary conditions for $\zeta = Z_1$ on $\partial \Omega_\e,$ so that $\zeta = 0$ on $\partial B_1$ and $\zeta = - \Psi_1$ on $\partial B_2.$ We let $\eta: (0,\infty) \to [0,1]$ be a $C^1$ function that satisfies $\eta(t) \equiv 1$ for $t \in \bigl[ \sfrac{1}{\e^2}, \sfrac{1}{\e^2} + \sfrac{b}{2}\bigr],$ $\eta(t) \equiv 0$ when $t \geqslant \sfrac{1}{\e^2} + b,$ and $|\eta^\prime| \leqslant \frac{2}{b},$ and set 
		\begin{align*}
			\zeta(x) := - \Psi_1(x) \eta(|x - a_2^\e|)\,,
		\end{align*}
  where we recall that $a_2^\e$ is the center of $B_2^\e.$
		Then 
		\begin{align*}
			\nabla \zeta = - \eta(|x-a_2^\e|) \nabla \Psi_1(x) - \Psi_1(x) \eta^\prime(|x-a_2^\e|)\frac{x-a_2^\e}{|x-a_2^\e|},
		\end{align*}
		so that, pointwise, we have the bound 
		\begin{align*}
			|\nabla \zeta(x)| \leqslant  |\nabla \Psi_1(x)| + \frac{2}{b} |\Psi_1(x)|,
		\end{align*}
		with support in the set $\sfrac{1}{\e^2}\leqslant |x-a_2^\e| \leqslant \sfrac{1}{\e^2} + b.$ Then, the energy of $\zeta$ is easily calculated: 
		\begin{equation} \label{e.energyofzeta}
			\begin{aligned}
				&F_\e(\zeta) \lesssim \int_{\sfrac{1}{\e^2}\leqslant |x-a_2^\e| \leqslant \sfrac{1}{\e^2} + b} \biggl( \frac{C}{b^2}|\Psi_1|^2 + |\nabla \Psi_1|^2\biggr) \,dx \\
				&\quad =  \biggl|\frac{C}{b^2} \int_{\sfrac{1}{\e^2}\leqslant |x-a_2^\e| \leqslant \sfrac{1}{\e^2} + b}|\Psi_1|^2 \,dx + \int_{|x-a_2^\e| = \sfrac{1}{\e^2}} \Psi_1(x) \frac{\partial \Psi_1(x)}{\partial \nu} \,d \sh^1\\
				&\quad \quad \quad \quad  - \int_{|x-a_2^\e| = \sfrac{1}{\e^2} + b} \Psi_1(x) \frac{\partial \Psi_1(x)}{\partial \nu} \,d \sh^1 \biggr|,
			\end{aligned}
		\end{equation}
		where we plugged in the PDE satisfied by $\Psi_1$ and integrated by parts as before; the signs in front of the boundary integrals reflect our choice that the corresponding unit normals point towards $a_2^\e.$ Each of these integrals are  explicitly estimated {using properties of Bessel functions~$K_0$}, and the triangle inequality then implies that 
		\begin{equation*}
			|F_\e(\zeta)| \lesssim \frac{K_0^2(2b)}{\e}\,.
		\end{equation*}
		This completes the proof  {using the large argument asymptotics of~$K_0$ (see \eqref{eq.besseldecay}).}  
	\end{proof}
	In the next Lemma, we use Lemma \ref{l.normal.der} to control the boundary integral on the right-hand side of \eqref{e.diag}. 
	\begin{lemma}
		\label{l.diag.terms} For all $b \gg 1$  and $\e^2 b\ll 1$, we have the estimate 
		\begin{equation} \label{e.quadtermbnd.diag}
			\begin{aligned}
				\biggl|  \int_{\partial B_2^\e} \Psi_1 \biggl( \frac{\partial \Psi_1}{\partial \nu_2} + \frac{\partial Z_1}{\partial \nu_2}\biggr)\biggr| {\lesssim\frac{K_0^2(2b)}{\e}  {\lesssim \frac{e^{-4b}}{b\e}}}\,.
			\end{aligned}
		\end{equation}
	\end{lemma}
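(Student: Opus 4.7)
The plan is to convert the boundary integral on $\partial B_2^\e$ into a volume integral on $\Omega_\e$ by integrating against a test function that matches $\Psi_1$ on $\partial B_2^\e$ and vanishes on $\partial B_1^\e$, and then to apply the Cauchy--Schwarz inequality, making essential use of the $H^1$ bound on $Z_1$ provided by Lemma~\ref{l.normal.der}.

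Concretely, I would recycle the cutoff from the proof of Lemma~\ref{l.normal.der}: with $\eta$ a radial function equal to $1$ on $[\e^{-2}, \e^{-2}+b/2]$, supported in $[\e^{-2},\e^{-2}+b]$ and with $|\eta^\prime|\leqslant 2/b$, set
\begin{equation*}
\phi(x) := \Psi_1(x)\,\eta(|x - a_2^\e|).
\end{equation*}
Then $\phi = \Psi_1$ on $\partial B_2^\e$, $\phi = 0$ on $\partial B_1^\e$, and the computation already carried out in~\eqref{e.energyofzeta} applies to $\phi$ verbatim to yield $\|\phi\|_{H^1(\Omega_\e)}^2 \leqslant CK_0^2(2b)/\e$. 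Using the PDEs $\Delta\Psi_1=\Psi_1$ and $\Delta Z_1 = Z_1$ together with the boundary values of $\phi$, integration by parts then transforms the boundary integral into a volume integral:
\begin{equation*}
\int_{\partial B_2^\e}\Psi_1\Bigl(\partial_{\nu_2}\Psi_1+\partial_{\nu_2}Z_1\Bigr)\,d\sh^1 = \int_{\Omega_\e} \phi\,(\Psi_1+Z_1) + \nabla\phi\cdot\nabla(\Psi_1+Z_1)\,dx.
\end{equation*}
Cauchy--Schwarz in $H^1$ bounds the absolute value of the right-hand side by $\|\phi\|_{H^1(\Omega_\e)}\bigl(\|\Psi_1\|_{H^1(\supp\phi)} + \|Z_1\|_{H^1(\Omega_\e)}\bigr)$.

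The second factor is in hand: Lemma~\ref{l.normal.der} gives $\|Z_1\|_{H^1(\Omega_\e)}\leqslant C K_0(2b)/\sqrt{\e}$, while the local norm $\|\Psi_1\|_{H^1(\supp\phi)}$ can be shown to satisfy a bound of the same order by applying the identity $\int_U(|\nabla\Psi_1|^2+\Psi_1^2) = \int_{\partial U}\Psi_1\,\partial_\nu\Psi_1$ (a consequence of $\Delta\Psi_1=\Psi_1$) on the annulus $U = \supp\phi$ and estimating the two resulting boundary integrals via the asymptotics of $K_0$ exactly as in~\eqref{e.energyofzeta}. Multiplying the two factors through gives the claimed bound $CK_0^2(2b)/\e$.

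The main technical subtlety will be this local energy estimate on $\Psi_1$: $\supp\phi$ is an annulus of radial thickness $b$ but circumference of order $\e^{-2}$, so any estimate with the correct $\e$-scaling must exploit both the exponential decay of $K_0(|x-a_1^\e|)$ away from $a_1^\e$ and the Gaussian-type concentration of $\Psi_1$ near the unique point of $\partial B_2^\e$ closest to $a_1^\e$; a naive $L^\infty\cdot\text{area}$ bound would be too lossy by polynomial factors in $\e^{-1}$. Once this anisotropic local energy estimate is in place---and the reduction to boundary integrals via $\Delta\Psi_1=\Psi_1$ makes it very similar in spirit to the calculation already performed in Lemma~\ref{l.normal.der}---the rest of the argument above is routine.
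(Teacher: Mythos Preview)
Your argument is correct. The integration-by-parts identity with the test function $\phi=\Psi_1\,\eta(|x-a_2^\e|)$ is valid, the Cauchy--Schwarz step is sound, and the local energy bound $\|\Psi_1\|_{H^1(\supp\phi)}^2\leqslant CK_0^2(2b)/\e$ does follow from the boundary-integral computation in~\eqref{e.energyofzeta} (indeed, since $\Delta\Psi_1=\Psi_1$, the full $H^1$ norm on the annulus equals exactly the difference of the two circular boundary integrals there, so no Gaussian concentration argument beyond what is already in that proof is needed).

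The paper takes a shorter route. It splits the integral by the triangle inequality into $R_1=\bigl|\int_{\partial B_2^\e}\Psi_1\,\partial_{\nu_2}\Psi_1\bigr|$ and $R_2=\bigl|\int_{\partial B_2^\e}\Psi_1\,\partial_{\nu_2}Z_1\bigr|$. The term $R_1$ is one of the boundary integrals already estimated in~\eqref{e.energyofzeta}. For $R_2$ the paper exploits the boundary condition $Z_1=-\Psi_1$ on $\partial B_2^\e$ (and $Z_1=0$ on $\partial B_1^\e$) to rewrite $\int_{\partial B_2^\e}\Psi_1\,\partial_{\nu_2}Z_1=-\int_{\partial\Omega_\e}Z_1\,\partial_\nu Z_1=-\|Z_1\|_{H^1(\Omega_\e)}^2$ exactly, and then invokes Lemma~\ref{l.normal.der}. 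This avoids introducing a test function and the separate local estimate on $\Psi_1$; on the other hand, your approach treats both normal derivatives in one stroke and would generalize more readily to situations where the boundary values of the corrector do not match $-\Psi_1$ so cleanly.
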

	\begin{proof}
		\emph{Step 1.} By the triangle inequality, 
		\begin{equation*}
			\begin{aligned}
				\biggl|  \int_{\partial B_2^\e} \Psi_1 \biggl( \frac{\partial \Psi_1}{\partial \nu_2} + \frac{\partial Z_1}{\partial \nu_2}\biggr)\biggr| \leqslant \biggl| \int_{\partial B_2^\e} \Psi_1 \frac{\partial \Psi_1}{\partial \nu_2}\,d \sh^1\biggr| + \biggl| \int_{\partial B_2^\e} \Psi_1 \frac{\partial Z_1}{\partial \nu_2}\,d\sh^1 \biggr| =: R_1 + R_2. 
			\end{aligned}
		\end{equation*}
		The previous lemma shows that the term $R_1$ is controlled by $C\frac{K_0^2(2b)}{\e}, $ so that the proof of the Lemma is completed if we show the same bound for the term $R_2$. 
		
		\emph{Step 2.} First we make the observation that the prescribed boundary conditions on $Z_1$ imply that 
		\begin{align*}
			\int_{\partial B_2^\e} \Psi_1 \frac{\partial Z_1}{\partial \nu_2}\,d\sh^1=\int_{\partial B_2^\e} -Z_1 \frac{\partial Z_1}{\partial \nu_2}\,d\sh^1=-\int_{\partial \Omega_\e} Z_1 \frac{\partial Z_1}{\partial \nu_2}\,d\sh^1
		\end{align*}
		Then as $\Delta Z_1=Z_1$, integrating by parts we see that 
		\begin{align*}
			\int_{\partial \Omega^\e} Z_1 \frac{\partial Z_1}{\partial \nu_2}\,d\sh^1=\int_{\Omega^\e}|\nabla Z_1|^2+|Z_1|^2\,dx= \|Z_1\|_{H^1(\Omega^\e)}^2.
		\end{align*}
		Thus $R_2= \|Z_1\|_{H^1(\Omega^\e)}^2$, and {by} Lemma \ref{l.normal.der} we thus have that 
		
		\begin{align*}
			R_2= \|Z_1\|_{H^1(\Omega^\e)}^2{\lesssim \frac{K_0(2b)^2}{\e} {\lesssim\frac{e^{-4b}}{b\e}}}\,.
		\end{align*}
         {This completes the proof using large argument asymptotics of~$K_0$ \eqref{eq.besseldecay}.}


		
	\end{proof}
	\begin{lemma}
		\label{l.offdiag.terms}
		 {Assume $b\gg 1$ and $b\e^2 \ll 1$. }The off-diagonal terms from \eqref{e.offdiag} have the asymptotic expansion
		\begin{multline} \label{l.statementoffdiag}
				\Biggl|	\int_{\partial B_2^\e} \frac{\partial \Psi_1}{\partial \nu_2}\,d \sh^1 - \int_{\partial B_2^\e} \Psi_1 \frac{\partial \Psi_2}{\partial \nu_2}\,d \sh^1 - \int_{\partial B_1^\e}\Psi_2  \frac{\partial Z_1}{\partial \nu_1}\,d \sh^1 - \frac{2e^{-2b} \sqrt{\pi}}{\e}\Biggr| \\
                {\lesssim \frac{e^{-4b}}{b \e} {+b\e^2 \frac{e^{-2b}}{\e} + \e}}\,. 
		\end{multline}
	\end{lemma}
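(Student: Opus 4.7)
Denote the three boundary integrals by
$I_1 := \int_{\partial B_2^\e} \partial_{\nu_2}\Psi_1\,d\sh^1$, $I_2 := \int_{\partial B_2^\e}\Psi_1\,\partial_{\nu_2}\Psi_2\,d\sh^1$, and $I_3 := \int_{\partial B_1^\e}\Psi_2\,\partial_{\nu_1}Z_1\,d\sh^1$. The first two are explicit Bessel-function integrals that I would evaluate by Laplace's method, exploiting the exponential decay of $K_\nu$ and the concentration of the integrand at the point of $\partial B_2^\e$ closest to $a_1^\e$; the third is a contribution involving $Z_1$, which should be absorbed into the error $O(K_0^2(2b)/\e)$ by invoking Lemma \ref{l.normal.der}. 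To set up, parametrize $\partial B_2^\e$ by $x(\theta) = a_2^\e + \e^{-2}(\cos\theta, \sin\theta)$ with $\theta = \pi/2$ corresponding to the closest point $(0,-b)$, so that $|x(\theta)-a_1^\e|^2 = \e^{-4} - 2d\e^{-2}\sin\theta + d^2$ with $d = 2\e^{-2}+2b$, and Taylor expand around $\theta = \pi/2$: $|x-a_1^\e| = (\e^{-2}+2b) + \e^{-2}(\theta - \pi/2)^2 + O(\e^{-2}(\theta-\pi/2)^4)$.

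For $I_1$, I write $\partial_{\nu_2}\Psi_1 = -\frac{K_1(|x-a_1^\e|)}{K_0(\e^{-2})}\cdot \frac{(x-a_1^\e)\cdot \nu_2}{|x-a_1^\e|}$ and note that the amplitude factor equals $\pm 1$ at the saddle point $\theta = \pi/2$. Applying the asymptotic $K_\nu(r) \sim \sqrt{\pi/(2r)}\,e^{-r}$ in the exponential factor and performing a Gaussian integration over $\theta$ (with arc-length element $\e^{-2}\,d\theta$), the $\e^{-2}\cdot\sqrt{\pi}\e$ factors combine to give a net $\sqrt{\pi}/\e$, so that
\[
I_1 = \mp \frac{\sqrt{\pi}}{\e}\cdot \frac{K_0(\e^{-2}+2b)}{K_0(\e^{-2})}\bigl(1 + O(\e^2 + e^{-2b})\bigr) = \mp \frac{\sqrt{\pi}\,e^{-2b}}{\e} + O\!\left(\frac{K_0^2(2b)}{\e}\right),
\]
using $K_0(\e^{-2}+2b)/K_0(\e^{-2}) = e^{-2b}(1+O(\e^2))$. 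For $I_2$, radial symmetry of $\Psi_2$ around $a_2^\e$ renders $\partial_{\nu_2}\Psi_2 = K_1(\e^{-2})/K_0(\e^{-2})$ constant on $\partial B_2^\e$, so that $I_2 = \frac{K_1(\e^{-2})}{K_0(\e^{-2})}\int_{\partial B_2^\e} \Psi_1\,d\sh^1$, and an identical Laplace analysis on $\int_{\partial B_2^\e}\Psi_1\,d\sh^1$ yields $I_2 = \pm\sqrt{\pi}\,e^{-2b}/\e + O(K_0^2(2b)/\e)$. Subtracting, the combination $I_1 - I_2$ produces the announced leading term $\mp 2\sqrt{\pi}\,e^{-2b}/\e$, up to the overall sign dictated by the orientation convention.

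For the remainder term $I_3$, the goal is to show $|I_3| \leq CK_0^2(2b)/\e$. The strategy is to invoke the $H^1$ bound $\|Z_1\|_{H^1(\Omega_\e)} \leq CK_0(2b)/\sqrt{\e}$ from Lemma \ref{l.normal.der}, pair it with Cauchy--Schwarz against the trace of $\Psi_2$ on $\partial B_1^\e$, and exploit the fact that $\Psi_2|_{\partial B_1^\e}$ is itself concentrated near the point of $\partial B_1^\e$ closest to $a_2^\e$. By the symmetric Laplace-method computation one obtains $\|\Psi_2\|_{L^2(\partial B_1^\e)} \sim K_0(2b)/\sqrt{\e}$, and a compatible $H^{1/2}$ bound of the same order; pairing in duality with $\|\partial_{\nu_1}Z_1\|_{H^{-1/2}(\partial B_1^\e)} \leq C\|Z_1\|_{H^1(\Omega_\e)}$ then gives the required $|I_3| \lesssim K_0^2(2b)/\e$. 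Alternatively, one may apply Green's second identity with the test function $\phi := \Psi_2+Z_2$ (which solves $\Delta\phi = \phi$ with $\phi = 1$ on $\partial B_2^\e$ and $\phi = 0$ on $\partial B_1^\e$) to rewrite $I_3$ as boundary integrals on $\partial B_2^\e$ alone, where the concentration and Laplace calculations are cleaner.

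The main obstacle is the bound on $I_3$: a direct application of the trace theorem on the inflated domain $\Omega_\e$ yields the suboptimal estimate $|I_3| \leq CK_0(2b)/\e^{3/2}$, which dominates the desired $K_0^2(2b)/\e = O(e^{-4b}/\e)$ precisely when $b \gg \tfrac{1}{2}\log(1/\e)$. The extra factor $K_0(2b)\,\sqrt{\e}$ must be recovered by exploiting the spatial concentration of both the trace of $\Psi_2$ and the normal derivative $\partial_{\nu_1}Z_1$ near the point of $\partial B_1^\e$ nearest to $B_2^\e$---essentially the same saddle-point effect that gives the $\sqrt{\pi}/\e$ in the Laplace computations of $I_1$ and $I_2$---and it is this sharpened trace/concentration estimate that carries the main technical weight of the lemma.
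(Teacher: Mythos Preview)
Your treatment of $I_1$ and $I_2$ via Laplace's method is correct and is exactly what the paper does (Steps~2--4 of its proof), so no issue there.

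The discrepancy is in your handling of $I_3$. The ``main obstacle'' you identify---recovering an extra factor $K_0(2b)\sqrt{\e}$ by a concentration argument for $\partial_{\nu_1}Z_1$---is not how the paper proceeds, and is in fact unnecessary. The paper's observation (its Step~1) is precisely the one you mention in passing as an ``alternative'': since $Z_2 = -\Psi_2$ on $\partial B_1^\e$ and $Z_2=0$ on $\partial B_2^\e$, one has
\[
I_3 = \int_{\partial B_1^\e}\Psi_2\,\partial_{\nu_1}Z_1\,d\sh^1 = -\int_{\partial B_1^\e}Z_2\,\partial_{\nu_1}Z_1\,d\sh^1 = -\int_{\partial\Omega_\e}Z_2\,\partial_\nu Z_1\,d\sh^1\,.
\]
Now both $Z_1$ and $Z_2$ solve $\Delta Z=Z$ in $\Omega_\e$, and the paper's trace norms (Appendix~\ref{sec:Sobolev}) are defined \emph{intrinsically} so that for such solutions $\|Z_2\|_{H^{1/2}(\partial\Omega_\e)}=\|Z_2\|_{H^1(\Omega_\e)}$ and $\|\partial_\nu Z_1\|_{H^{-1/2}(\partial\Omega_\e)}=\|Z_1\|_{H^1(\Omega_\e)}$ (Proposition~\ref{prop.normal.deriv.equal.h1}), with \emph{no} $\e$-dependent constants. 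Duality and Lemma~\ref{l.normal.der} then give
\[
|I_3|\leq \|Z_2\|_{H^1(\Omega_\e)}\|Z_1\|_{H^1(\Omega_\e)}\leq C\frac{K_0^2(2b)}{\e}
\]
directly. Your concern about a suboptimal $K_0(2b)/\e^{3/2}$ from the trace theorem stems from thinking of standard (Gagliardo-type) trace norms on the $\e$-dependent domain; the paper's definitions are designed precisely to sidestep this, and once you use them the substitution $\Psi_2\to -Z_2$ is the entire argument---no saddle-point analysis of $I_3$ is required.
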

	\begin{proof}
		\emph{Step 1.} We proceed by a similar argument to Step 2 of Lemma \ref{l.diag.terms}. First we note that \begin{align*}
			\int_{\partial B_1^\e} \Psi_2 \frac{\partial Z_1}{\partial \nu_1}\,d \sh^1 = \int_{\partial B_1^\e} -Z_2 \frac{\partial Z_1}{\partial \nu_1}\,d \sh^1=\int_{\partial \Omega_\e} -Z_2 \frac{\partial Z_1}{\partial \nu_1}\,d \sh^1
		\end{align*}
		Thus we may estimate this via 
		\begin{align*}
			\left|\int_{\partial B_1^\e} \Psi_2 \frac{\partial Z_1}{\partial \nu_1}\,d \sh^1\right|\leqslant \|Z_2\|_{H^{\sfrac{1}{2}}(\partial \Omega)_\e} \Bigl\|\frac{\partial Z_1}{\partial \nu}\Bigr\|_{H^{-\sfrac12}(\partial \Omega_\e)} .
		\end{align*}
		
		We now utilise the fact that $\Delta Z_i = Z_i$ and the definition of the $H^{\sfrac{1}{2}}$ norm to conclude that $\|Z_2\|_{H^{\sfrac{1}{2}}(\partial \Omega_\e)}=\|Z_2\|_{H^{1}(\Omega_\e)}$, and we use Proposition \ref{prop.normal.deriv.equal.h1} to conclude that $\Bigl\|\frac{\partial Z_1}{\partial \nu}\Bigr\|_{H^{-\sfrac12}(\partial \Omega)}=\|Z_1\|_{H^{1}(\Omega_\e)}$. Thus by taking the estimations of the $H^1$ norms of $Z_1,Z_2$ from Lemma \ref{l.normal.der}, we have that 
		\begin{align*}
			\left|\int_{\partial B_1^\e} \Psi_2 \frac{\partial Z_1}{\partial \nu_1}\,d \sh^1\right|{\lesssim \frac{K_0(2b)^2}{\e} {\lesssim\frac{e^{-4b}}{b\e}}}.
		\end{align*}
		\emph{Step 2.} Towards evaluating the first two terms, first we write $\partial B_2^\e = \partial B_2^{\e+} \cup \partial B_2^{\e-},$ where the $\pm$ respectively denote the upper/lower hemispheres (i.e., $y > -b - \frac{1}{\e^2}$ and $y < -b -\frac{1}{\e^2}$ respectively). It is clear that the contribution of $\partial B_2^{\e-}$ is exponentially small by prior arguments, so we focus on the contribution of $\partial B_2^{\e+}$ from the first two terms. We parameterize $\partial B_2^{\e+}$ as a graph over the $x-$ axis: 
		\begin{align*}
			y = - b - \frac{1}{\e^2} + \sqrt{\frac{1}{\e^4} - x^2}, \quad \quad |x| \leqslant \frac{1}{\e^2},
		\end{align*}
		where we choose the positive square root since we want the upper semicircle of $\partial B_2^\e.$ We note that for $(x,y) \in \partial B_2^\e,$
		\begin{align*}
			\nu_2 = -\e^2 \biggl(x,y + b + \frac{1}{\e^2}\biggr).
		\end{align*}
		As a sign check, we note that  at $x = 0, y = -b$ the normal $\nu_2 = -\e^2 (0, \frac{1}{\e^2}) = (0,-1).$
		Since 
		\[\nabla \Psi_1(x,y) = \frac{1}{K_0 \bigl( \frac{1}{\e^2}\bigr)} K_0^\prime\biggl( \sqrt{x^2 + \bigl(y - b - \frac{1}{\e^2})^2\bigr)}\biggr) \frac{(x,y - b - \frac{1}{\e^2})}{\sqrt{x^2 + \bigl(y - b - \frac{1}{\e^2}\bigr)^2}}\,,\]
		we arrive at
		\begin{equation} \label{e.nromalder}
			\begin{aligned}
				\nu_2 \cdot \nabla \Psi_1 & = -\frac{\e^2}{K_0( \tfrac{1}{\e^2})} K_0^\prime\biggl( \biggl\{x^2 + \biggl\{2b + \frac{2}{\e^2} -  \sqrt{\frac{1}{\e^4} - x^2} \biggr\}^2\biggr\}^{\sfrac12} \biggr)\\
				& \quad \quad \times \left[ \frac{ x^2 + \biggl(b + \frac{1}{\e^2} - \sqrt{\frac{1}{\e^4} - x^2} \biggr)^2 - (b+ \frac{1}{\e^2})^2}{ \biggl\{x^2 + \biggl\{2b + \frac{2}{\e^2} -  \sqrt{\frac{1}{\e^4} - x^2} \biggr\}^2\biggr\}^{\sfrac{1}{2}} } \right],
			\end{aligned}
		\end{equation}
		holding for all $(x,y) \in \partial B_2^\e.$ Note, with the parametrization $x \mapsto \bigl(x, -b - \frac{1}{\e^2} + \sqrt{\frac{1}{\e^4} - x^2}\bigr),$ the speed of the curve $\partial B_2^{\e+}$ is $\sqrt{1 +  \bigl(\frac{x}{\sqrt{\frac{1}{\e^4}-x^2}}\bigr)^2} = \bigl( \frac{1}{1-\e^4 x^2}\bigr)^{\sfrac12}.$ 
		
		\smallskip
		
		\emph{Step 3.} We will split the integral as $[0,\frac{M}{\e}],$  and between $[\frac{M}{\e},\frac{1}{\e^2}],$ for some $M \leqslant \frac{\sqrt{3}}{2} \frac1{\e} < \frac1{\e},$ to be fixed later. We compute each of these contributions separately.  For the first integral in \eqref{l.statementoffdiag} we obtain 
		\begin{equation} \label{e.firstterm}
			\int_{\partial B_2^{\e+}} \frac{\partial \Psi_1}{\partial \nu_2}\,d \sh^1 = \biggl(2\int_{0}^{\frac{M}{\e}} + \int_{|x| \geqslant \frac{M}{\e}} \biggr) \frac{\partial \Psi_1}{\partial \nu_2}\,d \sh^1\,.
		\end{equation}
		Towards computing it we note by the binomial theorem that for $|x| \leqslant \frac{M}{\e}$,
		\begin{align*}
			\biggl\{x^2 + \biggl\{2b + \frac{2}{\e^2} -  \sqrt{\frac{1}{\e^4} - x^2} \biggr\}^2\biggr\}^{\sfrac12} &= \biggl\{ \frac{1}{\e^4}  +  \frac{4}{\e^4}(1 + b \e^2)\Bigl( 1 + b \e^2 - \sqrt{1 - \e^4 x^2} \Bigr)  \biggr\}^{\sfrac12} \\
			&=: \biggl\{ \frac1{\e^4} + C_M(x) \biggr\}^{\sfrac12}
		\end{align*}
		where the function $C_M$  satisfies 
		\begin{enumerate}
			\item[(i)] $C_M(0) = \frac{4 b}{\e^2}(1 + b\e^2),$ 
			\item[(ii)] $C_M(\frac{\sqrt{3}}{2\e }) \geqslant \frac{3}{\e^4},$ so that $\sqrt{\e^{-4} + C_M(x)} \geqslant \frac{2}{\e^2}$ for $x =  \frac{\sqrt{3}}{2\e },$ 
			and \item[(iii)] $C_M(x)$ is an increasing function for all $x \in [0,\frac1{\e^2}],$ so that if $x \geqslant  \frac{\sqrt{3}}{2\e },$ then the preceding quantity, i.e., $\sqrt{\e^{-4} + C_M(x)} \geqslant \frac{2}{\e^2}.$ 
		\end{enumerate}
		In addition, we have 
		\begin{align*}
			&\frac{ x^2 + \biggl(b + \frac{1}{\e^2} - \sqrt{\frac{1}{\e^4} - x^2} \biggr)^2 - (b+ \frac{1}{\e^2})^2}{ \biggl\{x^2 + \biggl\{2b + \frac{2}{\e^2} -  \sqrt{\frac{1}{\e^4} - x^2} \biggr\}^2\biggr\}^{\sfrac{1}{2}} } \frac{1}{\sqrt{1 - \e^4 x^2}} \\
			&\quad \quad = \frac{- \frac{1}{\e^4} + \frac{2}{\e^4}\bigl(1 - (1 + b\e^2) \sqrt{1 - \e^4 x^2} \bigr)}{ \sqrt{\frac1{\e^4} + C_M(x)}}  \frac{1}{\sqrt{1 - \e^4 x^2}}\,.
		\end{align*}
		It follows then that the first term in \eqref{e.firstterm} contributes (see \eqref{e.nromalder})
		\begin{align*}
			&2 \int_0^{\frac{M}{\e}} \frac{\partial \Psi_1}{\partial \nu_2} \,d \sh^1 \\ &\quad = - 2\e^2 \int_0^{\frac{M}{\e}} \frac{1}{K_0(\frac{1}{\e^2})}K_0^\prime \biggl( \sqrt{\frac{1}{\e^4} + C_M(x)}\biggr) \frac{- \frac{1}{\e^4} + \frac{2}{\e^4}\bigl(1 - (1 + b\e^2) \sqrt{1 - \e^4 x^2} \bigr)}{ \sqrt{\frac1{\e^4} + C_M(x)}}  \frac{1}{\sqrt{1 - \e^4 x^2}}\,dx\\
			&\quad = - 2 \int_0^{\frac{M}{\e}} \frac{1}{K_0(\frac{1}{\e^2})} K_0^\prime \biggl( \frac{1}{\e^2}\sqrt{1 + \e^4 C_M(x)}\biggr) \frac{-1 + 2\bigl(1 - (1 + b\e^2) \sqrt{1 - \e^4 x^2} \bigr)}{\sqrt{1 + \e^4 C_M(x)}} \frac{1}{\sqrt{1 - \e^4 x^2}}\,dx \\
			&\quad = 2 \int_0^{\frac{M}{\e}}  \frac{1}{K_0(\frac{1}{\e^2})} K_0^\prime \biggl( \frac{1}{\e^2}\sqrt{1 + \e^4 C_M(x)}\biggr) \,dx + R_1,
		\end{align*}
		where the remainder $R_1$ satisfies 
		\begin{equation*}
			|R_1| \leqslant 2 b\e^2 \Biggl|\int_0^{\frac{M}{\e}}   \frac{1}{K_0(\frac{1}{\e^2})} K_0^\prime \biggl( \frac{1}{\e^2}\sqrt{1 + \e^4 C_M(x)}\biggr) \,dx\Biggr| \,.
		\end{equation*}
		 {As~$b \e^2 \ll 1,$ this is a remainder term. } Finally, {by using~\eqref{e.besselderivatives} and Lemma~\ref{l.besselfct1}} , we obtain 
		\begin{align*}
			&2\biggl| \int_0^{\frac{M}{\e}} \frac{1}{K_0(\frac{1}{\e^2})} K_0^\prime \biggl( \frac{1}{\e^2}\sqrt{1 + \e^4 C_M(x)}\biggr) \,dx\biggr|\\
			&\quad \approx 2 \biggl| \int_0^{\frac{M}{\e}} \exp \Bigl( \frac{1}{\e^2} \bigl( 1 - \sqrt{1 + \e^4 C_M(x)}\bigr) \Bigr)\sqrt{\frac{1}{\sqrt{1 + \e^4C_M(x)}}}\,dx \biggr|\,.
		\end{align*}
		Inserting the definition of $C_M(x) = \frac{4}{\e^4} (1 + b\e^2) \bigl( 1 + b\e^2 - \sqrt{1 - \e^4 x^2} \bigr),$ and making the change of variables $s = \e^2 x,$ we find that the last integral simplifies to
		\begin{align*}
			&	\frac{2}{\e^2} \int_0^{M \e} \!\!\exp \Bigl( \frac{- \frac{4}{\e^2}(1 + b\e^2) (1 + b\e^2 - \sqrt{1 - s^2})}{1 + \sqrt{1 + 4(1 + b\e^2)(1 + b\e^2 - \sqrt{1 - s^2})}}\Bigr)   \Bigl( 1 {+} 4 (1 {+} b\e^2)(1 {+} b\e^2 {-} \sqrt{1 {-} s^2})\Bigr)^{-\frac14}       \,ds\\
			&\quad \approx \frac{2}{\e^2}\int_0^{M\e} \exp\Bigl( - \frac{2}{\e^2}(b\e^2  + \frac12 s^2)\Bigr)\,ds \\
			&\quad = \frac{2e^{-2b}}{\e^2}\int_0^{M\e} \exp \Bigl(- \frac{s^2}{\e^2}\Bigr)\,ds \\
			&\quad = \frac{2e^{-2b}}{\e} \int_0^{M} \exp(-s^2)\,ds = \frac{e^{-2b}\sqrt{\pi}}{\e} \Bigl[ 1 - \frac{2}{\sqrt{\pi}} \int_M^\infty \exp(-s^2)\,ds \Bigr] = \frac{e^{-2b}\sqrt{\pi}}{\e} \bigl(1 - \mathrm{erfc} (M) \bigr) \,.
		\end{align*}
		In each of the preceding two displays, the $\approx$ (approximate) sign means that the left-hand and right-hand sides differ by $O(\e).$
		To conclude the computation of the leading term in \eqref{e.firstterm}, we observe that the complimentary error function $\mathrm{erfc}$ satisfies the asymptotics 
		\begin{equation*}
			\Bigl| \mathrm{erfc}(M)  - \frac{e^{-M^2}}{M \sqrt{\pi}} \Bigr| \leqslant CM^{-3} e^{-M^2}\,.
		\end{equation*}
		At this point, we must choose $M$ so that $C M^{-3}e^{-M^2} \ll 1$ as $\e \to 0.$  This is, for example, guaranteed with the choice\footnote{the precise prefactor is not important, but is chosen to simplify the arithmetic in our computation of the tails}  $M = \frac{\sqrt{3}}{2} \frac{1}{\e},$ so that  combining the preceding displays we find 
		\begin{equation*}
			\Biggl|	2 \int_0^{\frac{M}{\e}} \frac{\partial \Psi_1}{\partial \nu_2} \,d \sh^1 - \frac{\sqrt{\pi} e^{-2b}}{\e} \Biggr| {\lesssim \e +   \frac{e^{-2b}}{\e^4}e^{-\frac{C}{\e^2}} \lesssim \e}\,. 
		\end{equation*}
		For the tail term in \eqref{e.firstterm}, noting from the properties of the function $C_M,$ that $$\sqrt{\e^{-4} + C_M(\frac{\sqrt{3}}{2\e})} \geqslant \frac{2}{\e^2},$$ and that $C_M$ is increasing, we find 
		\begin{equation*}
			\biggl| \int_{|x| \geqslant \frac{M}{\e}} \frac{\partial \Psi_1}{\partial \nu_2}\,d \sh^1\biggr| \leqslant C \int_{\frac{M}{\e}}^{\frac{1}{\e^2}} \exp \bigl( \frac1{\e^2} (1 - \sqrt{1 + \e^4 C_M(x)})\bigr)\,dx \leqslant \frac{1}{\e^2} \exp\biggl( - \frac{1}{\e^2} \biggr) \leqslant C \e\,,
		\end{equation*}
		for all $\e$ small enough. 
		
		\smallskip 
		
		\emph{Step 4.} The last step in the proof is to evaluate the asymptotics of the term 
		\begin{equation*}
			\int_{\partial B_2^\e} \Psi_1 \frac{\partial \Psi_2}{\partial \nu_2}\,d \sh^1\,.
		\end{equation*}
		This is easier than Step 3, since 
		\begin{equation*}
			\frac{\partial \Psi_2}{\partial \nu_2}= -\frac{1}{K_0\bigl(\frac{1}{\e^2}\bigr)} K_0^\prime \Bigl(\frac{1}{\e^2}\Bigr) \,.
		\end{equation*}
		Consequently, we find, {using~\eqref{e.besselderivatives} and Lemma~\ref{l.besselfct1}}, that
		\begin{align*}
			&\int_{\partial B_2^\e} \Psi_1 \frac{\partial \Psi_2}{\partial \nu_2}\,d \sh^1 \\
			&\quad = -2\frac{1}{K_0\bigl(\frac{1}{\e^2}\bigr)} K_0^\prime \Bigl(\frac{1}{\e^2}\Bigr) \int_0^{\frac{1}{\e^2}} \frac{1}{K_0\bigl(\frac{1}{\e^2}\bigr)} K_0 \Bigl(\frac{1}{\e^2}\sqrt{1 + \e^4 C_M(x)} \Bigr) \frac{1}{\sqrt{1 - \e^4 x^2}}\,dx  + \frac{1}{\e^2}\exp \Bigl( - \frac{1}{\e^2}\Bigr)\\
			&\approx \frac{e^{-2b}\sqrt{\pi}}{\e} + C\e + \frac{1}{\e^2}\exp \Bigl( - \frac{1}{\e^2}\Bigr)\,.
		\end{align*}
		The proof of the proposition is completed by combining Steps 1 through 4. 
	\end{proof}

	\begin{proof}[Proof of Theorem \ref{t.2balls.constbc}]
		 {Theorem \ref{t.2balls.constbc} follows by taking the representation formula of \eqref{e.decompofenergy}, with the asymptotics of the diagonal terms obtained in Lemma \ref{l.diag.terms} under the observation that $b\gg 1$ and $b\e^2\ll 1$, $\left|\frac{K_0'(\e^{-2})}{K_0(\e^{-2})}+1\right|\leqslant C\e^2$. The off-diagonal terms are obtained similarly in Lemmas \ref{l.boundaryintegrals1} and \ref{l.offdiag.terms}.}
	\end{proof}

	\subsection{Interaction energies for O(1) separation between particles}
 \label{s.O1separation} 
In this section we use formal asymptotics to compute the energy of interaction between two particles when $b=O(1)$ and $\e\to0^{+}$. This amounts to computing various terms in \eqref{e.diag} and \eqref{e.offdiag}. Since the problem is rotationally invariant, in this section we find it convenient to orient the particles horizontally (Fig.~\ref{fig:setup}), rather than vertically. Given $b,\e>0,$ consider two disk-like particles $B_1^\e$ and $B_2^\e$ of radius $1/\e^2$ where the first particle is centered at the origin and the distance between the particles is equal $2b$ as shown in Fig.~\ref{fig:setup}.

 \begin{figure}[ht]
    \centering
    \includegraphics[width=3.5in]{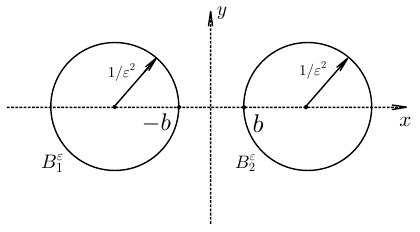}
    \caption{Geometry of the problem}
    \label{fig:setup}
\end{figure}

We begin by introducing polar coordinates associated with the center of the particle $B_1^\e$ so that
\[x=-\e^{-2}-b+\rho\cos{\theta},\quad y=\rho\sin{\theta},\]
then
\[\partial B_1^\e=\left\{(\rho,\theta)\left|\,\rho=\e^{-2},\,\theta\in[0,2\pi)\right.\right\}\] and \[\partial B_2^\e=\left\{(\rho,\theta)\left|\,\rho^2-4(\e^{-2}+b)\rho\cos{\theta}+4{(\e^{-2}+b)}^2-\e^{-4}=0,\,\theta\in[0,2\pi)\right.\right\}.\]
Now suppose that $\e\ll1$ and let $t:=\e^{-1}\,\theta.$ Then, if $t=O(1),$ we have $\theta=O\left(\e\right)$ so that \[\cos{\theta}=1-\frac{\e^2t^2}{2}+O\left(\e^{4}\right)\]
and the equation for $\partial B_2^\e$ is 
\[\rho^2-4(\e^{-2}+b)\rho\left(1-\frac{\e^2t^2}{2}\right)+4{(\e^{-2}+b)}^2-\e^{-4}=0,\]
up to the order $O\left(\e\right).$ Solving this equation for $\rho$, gives an asymptotic expression for the boundary of the right disk, i.e.,
\begin{equation}
\label{eq:stw}
\rho|_{\partial B_2^\e}=\e^{-2}+2b+t^2,
\end{equation}
valid up to $O\left(\e\right),$ while the boundary of the left disk is given by
\[\rho|_{\partial B_1^\e}=\e^{-2}.\]
We now solve the problem \eqref{eq:sipa} for the left disk
\begin{equation}
\label{eq:as1}
\left\{\begin{aligned}&-\Delta \Psi_1+\Psi_1=0\mbox{ in }\mathbb{R}^2\backslash B_1^\e,\\&\Psi_1|_{\partial B_1^\e}=1.\end{aligned}\right.
\end{equation}
Using the radially symmetric ansatz $\Psi_1=\Psi_1(\rho)$ and the fact that $\rho>\e^{-2}\gg1,$ we find that 
\begin{equation}
\label{eq:psi1}
\Psi_1(\rho)\sim e^{\e^{-2}-\rho}.
\end{equation}
to leading order. Note that this result matches the expression \eqref{eq:psi} for small $\e.$ Therefore
\begin{equation}
\label{eq:psi1b2}
\Psi_1|_{\partial B_2^\e}=\Psi_1\left(\e^{-2}+2b+t^2\right)=e^{-2b-t^2}
\end{equation}
and this expression decays exponentially fast in $t$ as one moves away from the point on $\partial B_2^\e$ that is the closest to $\partial B_1^\e.$

Now we can solve the problem \eqref{e.ZjPDE}. We have
\begin{equation}
\label{eq:3.8bis}
\left\{
\begin{array}{ll}
-Z_{1,\rho\rho}-\frac{1}{\rho}Z_{1,\rho}-\frac{1}{\e^2\rho^2}Z_{1,tt}+Z_1=0,&(\rho,t)\in\Omega_\e\\Z_1\left(\rho,\e t\right)=0,&\rho=\e^{-2},\\Z_1\left(\rho,\e t\right)=-e^{-2b-t^2},&(\rho,t)\in\partial B_2^\e.
\end{array}
\right.
\end{equation}
Because $\rho>\e^{-2},$ to leading order the equation in \eqref{eq:3.8bis} takes the form
\[-Z_{1,\rho\rho}+Z_1=0,\]
with the general solution
\[Z_1=C_1e^{-\rho}+C_2e^\rho.\]
Substituting this $Z_1$ into the boundary conditions from \eqref{eq:3.8bis} gives
\[\left\{\begin{array}{l}C_1e^{-\e^{-2}}+C_2e^{\e^{-2}}=0,\\C_1e^{-\e^{-2}-2b-t^2}+C_2e^{\e^{-2}+2b+t^2}=-e^{-2b-t^2},\end{array}\right.\]
so that
\[C_1=\frac{e^{\e^{-2}}}{e^{4b+2t^2}-1},\quad C_2=-\frac{e^{-\e^{-2}}}{e^{4b+2t^2}-1},\]
hence
\begin{equation}
\label{eq:spq0}
Z_1=\frac{2\sinh{(\e^{-2}-\rho)}}{e^{4b+2t^2}-1}
\end{equation}
solves \eqref{eq:3.8bis} to leading order in $\e.$ It then follows that
\begin{equation}
\label{eq:spq}
\left.\frac{\partial Z_1}{\partial \nu_{1}}\right|_{\partial B_1^\e}=-\left.\frac{\partial Z_1}{\partial \rho}\right|_{\partial B_1^\e}=\frac{2}{e^{4b+2t^2}-1}.
\end{equation}

Next, we observe that 
\[\Psi_1\left(\rho(\theta)\right)=\Psi_2\left(\bar\rho(\bar\theta)\right)\]
(cf. Fig.~\ref{fig:setup1}).
\begin{figure}
    \centering
    \includegraphics[width=3.5in]{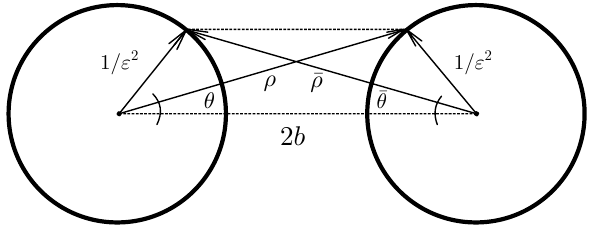}
    \caption{Setup for the calculation of $z$}
    \label{fig:setup1}
\end{figure}
Recalling that $\theta=\e t,$ we set $\bar\theta=\e\bar{t},$ use the law of sines and \eqref{eq:stw} to find that
\[\frac{\e^{-2}+2b+t^2}{\sin{(\e\bar{t})}}=\frac{1}{\e^2\sin{(\e t)}},\]
hence
\[\left(\e^{-2}+2b+t^2\right)t=\e^{-2}\bar{t},\]
because $\e^{-2}t,\e^{-2}\bar{t}\ll1.$ We conclude that
\[\bar{t}=t+O\left(\e^2\right)\]
and
\begin{equation}
\label{eq:spr}
\Psi_2|_{\partial B_1^\e}=\Psi_1\left(\rho\left(\e t\right)\right)+O\left(\e^2\right)=e^{-2b-t^2}+O\left(\e^2\right).
\end{equation}

We are now ready to determine the $\e\to0^{+}$ asymptotics of terms that appear in (\ref{e.diag}-\ref{e.offdiag}) in a series of six Steps:

\smallskip\noindent {\it Step (1).} Using \eqref{eq:psi1}, we obtain 
\[\int_{\partial B_1}\frac{\partial\Psi_1}{\partial\nu_1}\sim{\frac{2\pi}{\e^2}}\]

\smallskip\noindent {\it Step(2).} From \eqref{eq:spq} and \eqref{eq:spr}, we deduce
\begin{multline}
\label{eq:psi2dz1dnu}
\int_{\partial B_1^\e}\Psi_2\frac{\partial Z_1}{\partial\nu_1}\,d \sh^1=\e^{-2}\int_{-\pi}^\pi\Psi_2\frac{\partial Z_1}{\partial\nu_1}\,d\theta=\e^{-1}\int_{-\pi/\e}^{\pi/\e}\Psi_2\frac{\partial {Z}_1}{\partial\nu_1}\,dt\\\sim \frac{2}{\e}\int_{-\infty}^{\infty}\frac{e^{-6b}e^{-3t^2}}{1-e^{-4b}e^{-2t^2}}\,dt\sim\frac{\sqrt{2\pi}}{\e}\Theta_3\left(e^{-2b}\right),
\end{multline}
where
\begin{equation}
\label{eq:theta}
\Theta_k(x):=\sqrt{\frac2\pi}\int_{-\infty}^\infty\frac{x^ke^{-kt^2}}{1-x^2e^{-2t^2}}\,dt,
\end{equation}
where $x\in(0,1)$ and $k\in\mathbb{N}$. Note that ${\Theta_2(x)=\Li\left(x^2\right)},$ where $\Li$ is the polylogarithmic function~({see~\cite{AS,Leb}}) and
\begin{equation}
\label{eq:thelim}
\lim_{x\to0}{\frac{\Theta_k(x)}{x^k}}=\sqrt{\frac2k}.
\end{equation}

\smallskip\noindent{\it Step(3).} Using \eqref{eq:psi1b2}, we have
\begin{multline}
\label{eq:psi1dpsi2dnu}
\int_{\partial B_2^\e}\Psi_1\frac{\partial \Psi_2}{\partial\nu_2}\,d \sh^1=\e^{-2}\frac{\partial \Psi_2}{\partial\nu_2}(\e^{-2})\int_{-\pi}^\pi\Psi_1\,d\theta\\=\frac{1}{\e^2}\frac{\partial \Psi_2}{\partial\nu_2}(\e^{-2})\int_{-\pi/\e}^{\pi/\e}\Psi_1\,dt\sim \frac{e^{-2b}}{\e^{2}}\int_{-\infty}^{\infty}e^{-t^2}\,dt=\frac{\sqrt{\pi}e^{-2b}}{\e}.
\end{multline}

\begin{figure}
    \centering
    \includegraphics[width=3.5in]{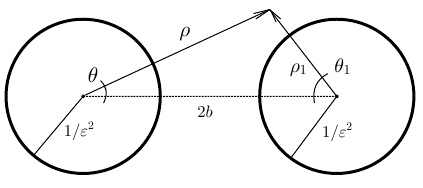}
    \caption{Setup for the calculation in (5).}
    \label{fig:setup2}
\end{figure}

\smallskip\noindent{\it Step (4).} Using Fig.~\ref{fig:setup2} and the law of cosines, we have
\begin{equation}
\label{eq:sph}
\rho^2=4{(\e^{-2}+b)}^2+\rho_1^2-4\rho_1(\e^{-2}+b)\cos{\theta_1},
\end{equation}
then $\Psi_1$ can be written as
\[
\bar{\Psi}_1(\rho_1,\theta_1)=\Psi_1(\rho(\rho_1,\theta_1),\theta(\rho_1,\theta_1))\sim e^{\e^{-2}-\sqrt{4{(\e^{-2}+b)}^2+\rho_1^2-4\rho_1(\e^{-2}+b)\cos{\theta_1}}}
\]
so that
\begin{multline*}
\frac{\partial\Psi_1}{\partial\nu_2}=-\left.\frac{\partial\bar{\Psi}_1}{\partial\rho_1}\right|_{\rho_1=\e^{-2}}\\\sim \frac{2(1+b\e^{2})\cos{\theta_1}-1}{\sqrt{4{(1+b\e^{2})}^2+1-4(1+b\e^{2})\cos{\theta_1}}}e^{\e^{-2}(1-\sqrt{4{(1+b\e^{2})}^2+1-4(1+b\e^{2})\cos{\theta_1}})}
\end{multline*}
and
\[\frac{\partial\Psi_1}{\partial\nu_2}\sim -e^{-2b-t_1^2}.
\]
Here the last step follows because
\[1-2\cos{\theta_1}\sim -1+\e^2 t_1^2\]
and
\[4{(\e^{-2}+b)}^2+\e^{-4}-4\e^{-2}(\e^{-2}+b)\cos{\theta_1}\sim \e^{-2}+2b+t_1^2,\]
where $t_1:=\theta_1/\e.$ Therefore
\begin{equation}
\label{eq:sps}
\int_{\partial B_2^\e}\frac{\partial\Psi_1}{\partial\nu_2}\,d \sh^1\sim-\frac{e^{-2b}}{\e}\int_{-\pi/\e}^{\pi/\e}e^{-t_1^2}\,dt_1\sim-\frac{\sqrt{\pi}}{\e}e^{-2b}
\end{equation}

\smallskip\noindent{\it Step(5).} Using \eqref{eq:psi1b2}, we find
\begin{equation}
\label{eq:spu}
\int_{\partial B_2^\e}\Psi_1\frac{\partial\Psi_1}{\partial\nu_2}\,d \sh^1\sim-\frac{e^{-4b}}{\e}\int_{-\pi/\e}^{\pi/\e}e^{-2t^2}\,dt\sim-\frac{\sqrt{\pi/2}}{\e}e^{-4b}
\end{equation}

\smallskip\noindent{\it Step(6).} The last term that we need to estimate is $\int_{\partial B_2^\e}\Psi_1\frac{\partial Z_1}{\partial\nu_2}.$ From \eqref{eq:spq0}, we have that
\[\bar{Z}_1(\rho_1,t_1)=Z_1\left(\rho(\rho_1,t_1),t(\rho_1,t_1)\right)\sim\frac{2\sinh{(\e^{-2}-\rho(\rho_1,t_1))}}{e^{4b+2t(\rho_1,t_1)^2}-1},\]
where $t_1=\theta_1/\e$ and $(\rho_1,\theta_1)$ are as in Fig.~\ref{fig:setup2}. Further, \[\rho(\rho_1,t_1)=\sqrt{4{(\e^{-2}+b)}^2+\rho_1^2-4\rho_1(\e^{-2}+b)\cos{\theta_1}}\] per \eqref{eq:sph} and
\[2(\e^{-2}+b)\sin{\theta}=\rho_1\sin{(\theta+\theta_1),}\]
by the law of sines, hence
\[\frac{\partial Z_1}{\partial \nu_2}=-\left.\frac{\partial \bar{Z}_1}{\partial \rho_1}\right|_{\rho_1=\e^{-2}}\sim-\frac{2e^{-4b-2t_1^2}}{1-e^{-4b-2t_1^2}}\cosh{(2b+t_1^2)}.\] It follows that
\begin{align}
\label{eq:spf}
\int_{\partial B_2^\e}\Psi_1\frac{\partial Z_1}{\partial\nu_2}\,d \sh^1&\sim -\frac{2}{\e}\int_{-\pi/\e}^{\pi/\e}\frac{e^{-6b-3t_1^2}}{1-e^{-4b-2t_1^2}}\cosh{(2b+t_1^2)}\\
&\sim-\frac{\sqrt{\pi/2}}{\e}\left(\Li\left(e^{-4b}\right)+\Theta_4\left(e^{-2b}\right)\right).
\end{align}
Now collecting the energy contributions in Steps (1) through (6) and using (\ref{e.diag}-\ref{e.offdiag}), we find that
\begin{multline}
\label{eq:expans}
	\kappa_\e=g_1^2{\|\Psi_1+z_1\|}^2+g_2^2{\|\Psi_2+z_2\|}^2+2g_1g_2\langle\Psi_1+z_1,\Psi_2+z_2\rangle\\=\frac{2\pi}{\e^2}\left(g_1^2+g_2^2\right)+\frac{\sqrt{\pi/2}}{\e}\left[\Li\left(e^{-4b}\right)+\Theta_4\left(e^{-2b}\right)+e^{-4b}\right](g_1^2+g_2^2)\\-\frac{4\sqrt{\pi}}{\e}\left[e^{-2b}+\frac{1}{\sqrt{2}}\Theta_3\left(e^{-2b}\right)\right]g_1g_2+O(1)\,.
\end{multline}
Note that, using \eqref{eq:thelim}, this expression reduces to
\begin{multline*}
\kappa_\e=\frac{2\pi}{\e^2}\left(g_1^2+g_2^2\right)+\frac{\sqrt{\pi/2}}{\e}\left[2e^{-4b}+\frac{1}{\sqrt{2}}e^{-8b}\right](g_1^2+g_2^2)\\-\frac{4\sqrt{\pi}}{\e}\left[e^{-2b}+\frac{1}{\sqrt{3}}e^{-6b}\right]g_1g_2+O(1),
\end{multline*}
when $b\gg1,$ matching the asymptotics of $\kappa_\e$ established in Theorem~\ref{t.2balls.constbc}.

\begin{remark}
Consider a single particle $B_{1/\e^2}$ of the radius $1/\e^2$ centered at the origin and let $k(T)=2$ in \eqref{eq:pot0} so that
\[W(u)=2{|u|}^2-2{|u|}^4+{|u|}^6.\]
We can use asymptotic arguments developed in this section to compare the rates of decay of solutions to the nonlinear
	\begin{equation} \label{e.PDE.0.1}
		\begin{aligned}
			\Delta u &= \frac{1}{4}\nabla_uW(u), \quad u \in H^1\left(\R^2 \setminus \overline{B}_{1/\e^2}\right)\\
			u & = 1 \qquad \mbox{ on } \partial B_{1/\e^2}\,. 
		\end{aligned}
	\end{equation}
 and linear
	\begin{equation} \label{e.PDE.1.1}
		\begin{aligned}
			\Delta v &= v, \quad v \in H^1\left(\R^2 \setminus \overline{B}_{1/\e^2} \right)\\
			v & = 1 \qquad \mbox{ on } \partial B_{1/\e^2} 
		\end{aligned}
	\end{equation}
problems when $\e$ is small. Indeed, suppose that $(\rho,\theta)$ are polar coordinates associated with the center of a particle $B_{1/\e^2}$ and $\e\ll1$. From the proof of Lemma \ref{l.singleKm} for $m=0$ and \eqref{eq:psi1}, we have that the solution of \eqref{e.PDE.1.1} is
\begin{equation}
    \label{eq:non1}
    v = \frac{K_0 (\rho)}{K_0 \big( \frac{1}{\e^2} \big)}\sim e^{1/\e^2-\rho},
\end{equation}
because $\rho>\frac{1}{\e^2}$ and $\e\ll1.$ Assuming that $u=u(\rho)$ in the same asymptotic regime $u$ satisfies
\begin{equation} \label{e.PDE.0.2}
    \begin{aligned}
			u_{\rho\rho} &= \frac{1}{4}\nabla_uW(u),\\
			u\left(\frac{1}{\e^2}\right) & = 1, \quad \lim_{\rho\to\infty}u=0\,. 
	\end{aligned}
\end{equation}
to leading order in $\e.$ This problem has an explicit solution
\[u=\frac{\sqrt{2}}{\sqrt{1+\cosh{2\left(\rho-\frac{1}{\e^2}\right)}+\sqrt{2}\sinh{2\left(\rho-\frac{1}{\e^2}\right)}}}\sim\frac{2}{\sqrt{1+\sqrt{2}}}e^{1/\e^2-\rho},\]
when $\rho\gg\frac{1}{\e^2}$ and it would be reasonable to expect that the interaction between two well-separated particles would be the same to leading order, up to a multiplicative constant.
 
\end{remark}

It is possible to quantify the tail behaviors of $u$ and $v$ rigorously for general boundary conditions. At its heart is a convexity argument. We begin by noticing that with $W$ as in~\eqref{eq:pot0}  and $k(T) > \frac{4}{3},$ the function $W$ is uniformly convex and $C^2,$ so that for any $p,q \in \R^2,$ we have 
 \begin{equation} \label{e.c0}
 	\bigl(\nabla W(p) - \nabla W(q)\bigr) \cdot (p-q) \geqslant c_0 |p - q|^2\,,
 \end{equation}
 for some $c_0 > 0$, that only depends on $k(T) > \frac43.$ Then we have, 

 \begin{proposition} \label{t.nonlinear}
     Let $v_\e$ denote the unique $H^1(\R^2 \setminus \overline{\hat{B}^\e_1 \cup \hat{B}^\e_2})$ solution to the nonlinear system ~\eqref{e.PDE.0}, and let $u_\e$ the corresponding unique solution to the linear system~\eqref{e.PDE.1}, both with the same boundary condition. Then, we claim that there exists {$0<\alpha\leq 1$ and $C>0$  } that only depend on $c_0,$ from~\eqref{e.c0} such that 
 \begin{equation}
     \label{e.claim}
     \int_{|x|\geqslant C_0|\log \e|} e^{2\alpha H_\e(x)}|u_\e-v_\e|^2\,dx \leqslant C{\e^{\frac{8}{2+\alpha}}} \,, 
 \end{equation}
 with  {$\frac{8}{2+\alpha}\geq\frac{8}{3}$ and}
 \begin{equation}
     \label{eq:he}
     H_\e(x) = \frac1{\e^2} \mathrm{dist}(x,\overline{\hat{B}^\e_1 \cup \hat{B}^\e_2}).
 \end{equation}
 \end{proposition}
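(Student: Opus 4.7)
The plan is to carry out a weighted energy estimate of Agmon type for $w:=u_\e-v_\e$. Since the two solutions share the same Dirichlet data, $w$ vanishes on $\partial\hat B_1^\e\cup\partial\hat B_2^\e$, and subtracting the two Euler--Lagrange equations (using $\nabla W(0)=0$ and $D^2W(0)=2k(T)I$) gives
\begin{equation*}
-\Delta w+\frac{k(T)}{2\e^4}\,w=\frac{1}{2\e^4}\,N(v_\e),\qquad N(p):=\tfrac12\nabla W(p)-k(T)p=-4|p|^2p+3|p|^4p.
\end{equation*}
A uniform $L^\infty$ bound $\|v_\e\|_\infty\le M$ follows from the maximum principle applied to the convex $W$, which in turn yields $|N(v_\e)|\lesssim|v_\e|^3$ pointwise.

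The first substantive step is an Agmon-type weighted decay estimate for $v_\e$ itself: for some $\alpha_0>0$ depending only on $c_0$,
\begin{equation*}
\int_{\R^2\setminus\overline{\hat B_1^\e\cup\hat B_2^\e}} e^{2\alpha_0 H_\e}\,v_\e^2\,dx\;\lesssim\;\e^2,
\end{equation*}
together with a matching pointwise bound $|v_\e(x)|\lesssim e^{-\alpha_0 H_\e(x)}$ uniformly in $\e$. The integral bound is proved by testing the nonlinear $v_\e$-equation against $e^{2\alpha_0 H_\e}v_\e$, integrating by parts, and using the monotonicity \eqref{e.c0} with $q=0$ to generate a coercive term of order $c_0/\e^4$. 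Because $|\nabla H_\e|=1/\e^2$ almost everywhere, the commutator from the weight is of order $\alpha_0^2/\e^4$ and is absorbed whenever $\alpha_0^2<c_0/4$; the boundary terms on $\partial\hat B_i^\e$ are controlled by standard boundary-layer estimates matching the Bessel profiles $\Psi_i$ of \eqref{eq:psi}. The pointwise bound can then be deduced by elliptic regularity, or alternatively by a direct supersolution comparison built from the $\Psi_i$, invoking the convexity of $W$.

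The second step is the corresponding localized estimate for $w$. Introduce a smooth cutoff $\phi_\e$ equal to $1$ on $\{|x|\ge C_0|\log\e|\}$, vanishing on $\{|x|\le\tfrac12 C_0|\log\e|\}$, with $|\nabla\phi_\e|\lesssim 1/|\log\e|$, and test the equation for $w$ against $\phi_\e^2\,e^{2\alpha H_\e}w$ for some $\alpha\in(0,\alpha_0)$. Integrating by parts and absorbing both the weight commutator ($\sim\alpha^2/\e^4$) and the cutoff commutator into the coercive term $\tfrac{k(T)}{2\e^4}\int\phi_\e^2 e^{2\alpha H_\e}w^2$ yields
\begin{equation*}
\int\phi_\e^2\,e^{2\alpha H_\e}\,w^2\,dx\;\lesssim\;\int_{\{|x|\ge\frac{C_0}{2}|\log\e|\}} e^{2\alpha H_\e}\,v_\e^6\,dx,
\end{equation*}
the factor $1/\e^4$ in the source being exactly compensated by the coercive term $1/\e^4$ in the energy. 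Invoking the pointwise bound from step one, $e^{2\alpha H_\e}v_\e^6\le C e^{(2\alpha-6\alpha_0)H_\e}$; on the support of $\phi_\e$ one has $H_\e\gtrsim C_0|\log\e|/\e^2$, so this is bounded by $\e^{A/\e^2}$ for some $A=A(\alpha,\alpha_0,C_0)>0$. Integrating over $\{|x|\ge\tfrac{C_0}{2}|\log\e|\}$ contributes an extra $\e^2|\log\e|$ from the volume element, so the total is $\lesssim\e^{A/\e^2}$, much smaller than $\e^4$ for $\e$ small.

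The principal obstacle is the joint calibration of $\alpha$, $\alpha_0$, and $C_0$: $\alpha_0$ is constrained by $\alpha_0^2<c_0/4$ to close step one, while $\alpha\in(0,\alpha_0)$ suffices both for the analogous absorption in step two and to guarantee $6\alpha_0-2\alpha>0$ so that $e^{(2\alpha-6\alpha_0)H_\e}$ genuinely decays; $C_0$ need only be chosen large enough that the cutoff support lies outside the balls, after which the quantitative decay is automatic. A secondary technical point is justifying the pointwise Agmon bound $v_\e\lesssim e^{-\alpha_0 H_\e}$ uniformly in $\e$: the cleanest route is a supersolution comparison argument built from the explicit linear profiles $\Psi_i$ from \eqref{eq:psi}, using the convexity of $W$ to apply the comparison principle.
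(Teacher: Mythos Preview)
Your strategy is the same Agmon-type weighted energy estimate as the paper's, and the calibration of the exponents is correct. The organizational difference is worth noting: you rewrite the equation for $w=u_\e-v_\e$ as a \emph{linear} equation with source $N(v_\e)$, so that the right-hand side of your final inequality carries $|v_\e|^6$, and you must therefore first prove exponential decay of the \emph{nonlinear} solution $v_\e$ via a separate Agmon argument and a comparison/supersolution step. The paper instead tests the identity
\[
\Delta u_\e-\tfrac{1}{4\e^4}\nabla W(u_\e)=\Delta(u_\e-v_\e)+\tfrac{1}{4\e^4}\bigl(\nabla W(v_\e)-\nabla W(u_\e)\bigr)
\]
against $\eta^2(v_\e-u_\e)$; the monotonicity \eqref{e.c0} supplies coercivity directly, and the source becomes $\tfrac{1}{4\e^4}(8|u_\e|^2-6|u_\e|^4)u_\e$, which involves only the \emph{linear} solution $u_\e$. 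Since $|u_\e|\lesssim e^{-H_\e}$ is immediate from the Green's function (Bessel) representation, the paper avoids your entire Step~1. Your route is valid but costs an extra lemma.

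One genuine imprecision: you write that the cutoff commutator $\int|\nabla\phi_\e|^2 e^{2\alpha H_\e}w^2$ is ``absorbed into the coercive term $\tfrac{k(T)}{2\e^4}\int\phi_\e^2 e^{2\alpha H_\e}w^2$''. This cannot be done directly, because $\phi_\e$ vanishes on part of the transition annulus while $|\nabla\phi_\e|$ does not. The paper handles this by a dyadic decomposition in $|x|$ followed by a buckling (absorption into neighboring annuli after summation). In your framework there is a cleaner fix: since you already have the pointwise bound $|u_\e|+|v_\e|\lesssim e^{-\alpha_0 H_\e}$ from Step~1, on the annulus $\{C_0|\log\e|/2\le|x|\le C_0|\log\e|\}$ you get $e^{2\alpha H_\e}w^2\lesssim e^{2(\alpha-\alpha_0)H_\e}$, which is already exponentially small in $1/\e^2$ and can simply be added to the right-hand side alongside the $v_\e^6$ term. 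You should state this explicitly rather than call it absorption.
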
 
 \smallskip 
 
      \begin{proof}[Proof of the Proposition~\ref{t.nonlinear}]
      Let $\eta$ be a positive smooth compactly supported test function that will be subsequently chosen. 
 	We compute that 
 	\begin{equation*}
 		\Delta u_\e - \frac{1}{2\e^4}\nabla_u W(u_\e) = \Delta (u_\e - v_\e) + \frac{1}{2\e^4}\bigl( \nabla_u W(v_\e) - \nabla_u W(u_\e) \bigr)\,,
 	\end{equation*}
 	so that taking the dot product of both sides with $\eta^2(v_\e - u_\e),$ integrating on $\R^2 \setminus \overline{\hat{B}^\e_1 \cup \hat{B}^\e_2}$, using the uniform convexity of $W,$ integrating by parts, and using that $u=v$ on the boundary, yields 
 	\begin{equation*}
  \begin{aligned}
     & C\int |\nabla \eta|^2 |u - v|^2+\int \eta^2 (v_\e - u_\e) \cdot \biggl( \Delta u_\e - \frac{1}{2\e^4}\nabla_u W(u_\e)\biggr)\,dx \\
      &\qquad \qquad \geqslant  \frac12 \int\eta^2 |\nabla u_\e - \nabla v_\e|^2 + \frac{c_0}{2\e^4} \int \eta^2 |v_\e-u_\e|^2\,dx\,. 
  \end{aligned}
 	\end{equation*}
 As~$|u_\e|\leqslant 1$, we estimate 
 	\begin{equation*}
 		\biggl|\Delta u_\e - \frac{1}{2\e^4} \nabla_u W(u_\e) \biggr| = \frac{1}{2\e^4}\bigl|\bigl( 8|u_\e|^2 - 6|u_\e|^4 \bigr)u_\e\bigr| \leqslant \frac{C}{\e^4}|u_\e|^3\,,
 	\end{equation*}
 	it follows by Cauchy-Schwarz that 
 	\begin{equation} \label{e.mainestimate}
  \begin{aligned}
     &\int\eta^2 |\nabla u_\e-\nabla v_\e|^2 + \frac{c_0}{2\e^4}\int \eta^2  |u_\e - v_\e|^2\,dx\\
     &\quad \quad \leqslant C\biggl( \frac1{\e^4}\int \eta^2|v_\e - u_\e|^2\biggr)^{\frac12} \biggl( \frac1{\e^4}\int \eta^2|u_\e|^6\biggr)^{\sfrac12} + C \int |\nabla \eta|^2|v_\e-u_\e|^2\,dx\\
     &\quad \quad \leqslant \frac{c_0}{4\e^4} \int \eta^2|u_\e - v_\e|^2 \,dx + \frac{C}{\e^4}\int \eta^2 |u_\e|^6\,dx + C \int |\nabla \eta|^2 |v_\e - u_\e|^2\,dx\,,
  \end{aligned}
 	\end{equation}
  so that upon rearranging, we get 
\begin{equation} \label{e.mainest2}
 \int\eta^2 |\nabla u_\e-\nabla v_\e|^2 +   \frac{c_0}{4\e^4}\int \eta^2 |u_\e - v_\e|^2\,dx \leqslant \frac{C}{2\e^4}\int \eta^2 |u_\e|^2 + C \int |\nabla \eta|^2 |v_\e - u_\e|^2\,dx \,.
\end{equation}
Let $\alpha > 0$ be a parameter that will be subsequently chosen. Further, we let $\chi\in C^\infty_c(\R^2),$ be a test function that will be chosen, which will satisfy  $0 \leqslant \chi\leqslant 1,$ and $\chi \equiv 0$ in the disk $B_4$ of radius $4$ centered at the origin. Finally, we set $\eta$ is defined by 
\begin{equation*}
    \eta(x) = e^{\alpha H_\e} \chi\,,
\end{equation*}
 where $H_\e$ is given in \eqref{eq:he}. As $|g| \leqslant 1,$ we note that $|u_\e(x)| \leqslant Ce^{-H_\e}$ (for example by examining the representation formula using the Green's function of the operator $\Delta - \e^{-4}$), and that on the support of $\chi,$ we have 
 \[
|\nabla \eta|\leqslant e^{\alpha H_\e} \bigl(\alpha  \chi |\nabla H_\e| + |\nabla \chi| \bigr) \leqslant e^{\alpha H_\e} \biggl( \alpha\frac{\chi}{\e^2} + |\nabla \chi|\biggr) = \frac{\alpha}{\e^2}\eta + e^{\alpha H_\e}|\nabla \chi|\,.
 \]
Inserting this in the estimate~\eqref{e.mainest2} we obtain 
\begin{equation*}
\begin{aligned}
    & \int \eta^2|\nabla u_\e - \nabla v_\e|^2 + \frac{c_0}{4\e^4}\eta^2 |u_\e-v_\e|^2\,dx  \lesssim \frac1{\e^4}\int \eta^2|u_\e|^6\,dx + \int \biggl(\frac{\alpha^2}{\e^4} \eta^2 + e^{2\alpha H_\e} |\nabla \chi|^2 \biggr)|v_\e-u_\e|^2\,dx\,.
\end{aligned}
\end{equation*}
Choosing $\alpha^2 = {\min}(1,\frac{c_0}{8}),$ we obtain 
\begin{equation*}
    \int e^{2\alpha H_\e} \chi^2 |\nabla u_\e- \nabla v_\e|^2 + \frac{c_0}{8\e^4}\int e^{2\alpha H_\e} \chi^2|u_\e - v_\e|^2 \lesssim \frac1{\e^4} \int e^{(2\alpha - 6) H_\e }\chi^2\,dx +  \int e^{2\alpha H_\e}|\nabla \chi|^2 |v_\e - u_\e|^2\,dx \,.
\end{equation*}
To conclude, we simply choose a sequence of dyadic annuli $\mathcal{A}_k := \{ |x| \in  [2^{k-1} R_0, 2^{k+2} R_0]\}$, and a corresponding sequence of choices $\chi = \chi_k$, with $\chi_k(x) \equiv 1$ for $|x| \in [2^k R_0, 2^{k+1} R_0],$ and $\chi_k(x) \equiv 0$ when $|x| \leqslant 2^{k-1}R_0$ or if $|x| \geqslant 2^{k+2} R_0,$ and $|\nabla \chi_k| \leqslant \frac{C}{R_0 2^k}.$ Summing over $k \in \mathbb{N},$ and buckling one last time, we find 
\begin{equation*}
    \int_{|x| \geqslant R_0} e^{2\alpha H_\e} \biggl( |\nabla u_\e - \nabla v_\e|^2 + \frac{c_0}{16 \e^4} |u_\e - v_\e|^2\biggr)\,dx \lesssim \frac{C}{\e^4} e^{-4R_0} +  \frac{C}{R_0^2} \int_{|x|\leqslant R_0} e^{2\alpha H_\e} |u_\e - v_\e|^2\,.
\end{equation*}
Finally, since $|u_\e| \leqslant 1$ and $|v_\e|\leqslant 1,$ we conclude by the triangle inequality and multiplying through by $\e^4$, that 
\begin{equation*}
    \int_{|x| \geqslant R_0} e^{2\alpha H_\e} |u_\e - v_\e|^2\,dx \lesssim  C e^{-4R_0} + C \e^4 e^{2\alpha R_0}\,. 
\end{equation*}
{It remains to make an optimal choice of~$R_0.$ Letting~$R_0 = k |\log \e|$ for a $k > 0$ to be chosen, the last estimate yields 
\begin{equation*}
    \int_{|x|\geqslant k |\log \e|} e^{2\alpha H_\e} |u_\e - v_\e|^2 \,dx \lesssim C\bigl( \e^{4k} + \e^{4 - 2\alpha k}\bigr)\,.
\end{equation*}
Balancing the two terms yields the optimal choice of~$k = \frac2{2+\alpha},$ and this choice, in turn, completes the proof of the proposition. 
}

\end{proof}

	\section{Two particles and nonconstant boundary conditions}
	
 \label{ss.two.var}
	In this section, we continue working with the geometry of Section \ref{ss.two.const}, but consider, instead, variable boundary conditions. In other words, we seek to obtain an expansion, in powers of $\e$ of the energy $F_\e$ for the problem \eqref{e.PDEblownup}, when $g_1,g_2$ are nonconstant. 
 Without loss of generality, we assume that the functions $g_i : \partial B_i^\e \to \RR^2$ have uniformly and absolutely summable Fourier developments with respect to local polar coordinates on the circles $\partial B_i^\e.$ To be precise, parametrizing~$\partial B_1$ by~$\{ (0, \sfrac{1}{\e^2} + b) + \sfrac{1}{\e^2}(\cos \theta, \sin \theta): \theta \in [0,2\pi)\},$ we assume that 
	\begin{align*}
		g_1 \bigl( (0, \sfrac{1}{\e^2} + b) + \sfrac{1}{\e^2}(\cos \theta, \sin \theta)\bigr) = \sum_{m\in \Z}g_m^{(1)} e^{im \theta}, \quad \quad \theta \in [0,2\pi),
	\end{align*}
	for some Fourier coefficients~$\{g_m^{(1)}\}_{m\in \Z} \in \ell^2 (\Z;\CC^2).$
	Similarly, parametrizing~$\partial B_2$ by $\{(0, - \sfrac{1}{\e^2}-b) + \sfrac{1}{\e^2}(\cos \theta, \sin \theta): \theta \in [0,2\pi)\},$ we assume that 
	\begin{align*}
		g_2 \bigl( (0, - \sfrac{1}{\e^2}-b) + \sfrac{1}{\e^2}(\cos \theta, \sin \theta)\bigr) = \sum_{m \in \Z} g_m^{(2)} e^{im\theta}, \quad \quad \theta \in [0,2\pi),
	\end{align*}
	for some Fourier coefficients $\{g_m^{(2)}\}_{m\in \Z} \in \ell^2(\Z;\CC^2).$
	
	As in the case of constant boundary conditions, we perform a splitting of the solution. To be precise, we introduce, for $j = 1,2,$ the functions 
	\begin{align*}
		\Psi_m^{(j)}(x) := \frac{K_m(|x-a_j|)}{K_m(\sfrac{1}{\e^2})} e^{im \theta_j}, \quad x \in \R^2 \setminus  \overline{B}_1 \cup \overline{B}_2
	\end{align*}
	where $\theta_j \in [0,2\pi)$ denotes the polar angle with pole $a_j$ so that for any $x \neq a_j,$ we have $(x-a_j) = |x-a_j| e^{i\theta_j}.$ Here, $K_m$ denotes the modified Bessel function of second kind of order $m,$ and by Lemma \ref{l.singleKm}, the function $\Psi_m^{(j)}$ captures the behavior of a single colloid. Finally, as in the case of the constant boundary conditions, we define $Z_m^{(j)}\in H^1\left(\Omega_\e\right),\ j=1,2,\ m\in\mathbb{N}$ to be the unique solutions to the problems 
	\begin{equation} \label{e.Zmjdef}
		\begin{aligned}
			\Delta Z_m^{(j)} &= Z_m^{(j)}  \ \ \quad \quad \mbox{ in } \Omega_\e,\\
			Z_m^{(j)} &= 0 \quad \ \ \ \quad \quad \mbox{ on } \partial B_j\\
			Z_m^{(j)} &= - \Psi^{(j)}_m \quad \quad \mbox{ on } \partial B_{\sigma(j)}\,.
		\end{aligned}
	\end{equation}
	where we recall that the transposition map~ $\sigma: \{1,2\} \to \{1,2\}$ is defined via~ $\sigma(1) = 2, \sigma(2) = 1.$ 
	
	Then, it is clear that the unique solution to \eqref{e.PDEblownup} is given by the formula 
	\begin{equation} \label{e.uexpansion}
		\begin{aligned}
			U = \sum_{j=1}^2 \sum_{m \in \Z} g_m^{(j)}(\Psi_m^{(j)} + Z_m^{(j)})
		\end{aligned}
	\end{equation}
	We will see shortly that the infinite sum in \eqref{e.uexpansion} does indeed converge in $H^1(\Omega_\e)$ and is therefore well-defined. In order to focus on the essential issues for the time being, let us suppose that there exists $M_0 \in \NN,$ such that 
	\begin{equation} \label{e.compactsptdata}
		g_m^{(j) } \equiv  0 \quad \quad \mbox{ for all } |m | \geqslant M_0, j = 1,2.
	\end{equation}
	This makes the infinite sums in \eqref{e.uexpansion}, in fact, finite. With this assumption, in what follows we will freely interchange various integrals and sums, keeping careful track of the dependence of errors on the tail parameter $M_0,$ and send $M_0 \to \infty$ at the end of the proof of Theorem \ref{t.mt} below. 
	
	Next, let us note that identical to \eqref{e.decompofenergy}, in this case, too, the energy associated to $u$ admits a splitting. To be precise, we have 
	\begin{lemma} \label{l.splitting.nonconst}
	Under the assumption \eqref{e.compactsptdata},	we have the decomposition 
		\begin{equation} \label{e.quadraticformdef}
			\begin{aligned}
			F_\e(U)= 
				\mathfrak{Re} \left[\begin{pmatrix}
					\ldots g_m^{(1)} \ldots | \ldots g_m^{(2)} \ldots 
				\end{pmatrix} \mathcal{A}_{M_0} \begin{pmatrix}
					\vdots\\ \overline{g_n^{(1)}} \vdots\\ ---\\  \vdots \overline{g_n^{(2)}} \vdots 
				\end{pmatrix} \right],
			\end{aligned}
		\end{equation}
	with the matrix $\mathcal{A}_{M_0} \in \R^{(4 M_0 + 1) \times (4M_0 + 1)}$ being given by 
	\begin{equation*}
		{ \frac{1}{2}}\begin{pmatrix}
			\begin{matrix}
				\ddots & \ddots & \ddots \\
				& & \\
				\ddots &  \int_{\partial B_1} e^{im \theta_1} \overline{\frac{\partial}{\partial \nu_1} (\Psi_n^{(1)} + Z_n^{(1)})} \,d \sh^1 & \ddots \\
				& & \\
				\ddots & \ddots & \ddots 
			\end{matrix}
			& \vline  & 
			\begin{matrix}
				\ddots & \ddots & \ddots \\
				& & \\
				\ddots &   \int_{\partial B_1} e^{im\theta_1} \overline{\frac{\partial }{\partial \nu_1} (\Psi_n^{(2)} + Z_n^{(2)})} \,d \sh^1 & \ddots \\
				& &\\
				\ddots & \ddots & \ddots 
			\end{matrix} \\ 
			\hline \\
			\begin{matrix}
				\ddots & \ddots & \ddots \\
				& & \\
				\ddots &  \int_{\partial B_2} e^{im \theta_2} \overline{\frac{\partial }{\partial \nu_2}(\Psi_n^{(1)} + Z_n^{(1)})} \,d \sh^1 & \ddots \\
				& & \\
				\ddots & \ddots & \ddots 
			\end{matrix}
			& \vline  & 
			\begin{matrix}
				\ddots & \ddots & \ddots \\
				& & \\
				\ddots &   \int_{\partial B_2} e^{im \theta_2} \overline{\frac{\partial}{\partial \nu_2} (\Psi_n^{(2)} + Z_n^{(2)})} \,d \sh^1& \ddots \\
				& & \\
				\ddots & \ddots & \ddots 
			\end{matrix} 
		\end{pmatrix} \,.
	\end{equation*}
	\end{lemma}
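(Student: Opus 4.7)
The plan is to reduce $F_\e(U) = \tfrac12 \|U\|_\e^2$ to a quadratic expression in the Fourier coefficients by combining bilinearity of the $H^1$ pairing with integration by parts and the PDE satisfied by each basis function. Write $\Phi_m^{(j)} := \Psi_m^{(j)} + Z_m^{(j)}$. A direct verification yields three properties: (i) $\Delta \Phi_m^{(j)} = \Phi_m^{(j)}$ in $\Omega_\e$, since $K_m(r)e^{im\theta}$ is a solution of the modified Helmholtz equation and $Z_m^{(j)}$ is defined so that it solves it as well; (ii) $\Phi_m^{(j)}|_{\partial B_j} = e^{im\theta_j}$, because the normalization of $\Psi_m^{(j)}$ by $K_m(1/\e^2)$ forces $\Psi_m^{(j)}|_{\partial B_j} = e^{im \theta_j}$ while $Z_m^{(j)}|_{\partial B_j} = 0$; and (iii) $\Phi_m^{(j)}|_{\partial B_{\sigma(j)}} = 0$ by the boundary condition in \eqref{e.Zmjdef}.

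Under the compact-support hypothesis \eqref{e.compactsptdata}, the sum \eqref{e.uexpansion} is finite, so bilinearity of the (Hermitian) $H^1$ pairing yields
\begin{equation*}
\|U\|_\e^2 = \sum_{j,k=1}^{2} \sum_{|m|,|n| \leqslant M_0} g_m^{(j)} \, \overline{g_n^{(k)}} \, \langle \Phi_m^{(j)}, \Phi_n^{(k)} \rangle_\e.
\end{equation*}
For each pair, I would apply Green's identity on $\Omega_\e$: since $\overline{\Delta \Phi_n^{(k)}} = \overline{\Phi_n^{(k)}}$, the bulk contributions cancel and
\begin{equation*}
\langle \Phi_m^{(j)}, \Phi_n^{(k)} \rangle_\e = \int_{\partial \Omega_\e} \Phi_m^{(j)} \, \overline{\tfrac{\partial \Phi_n^{(k)}}{\partial \nu}} \, d\sh^1.
\end{equation*}
Invoking properties (ii) and (iii), the integral over $\partial B_{\sigma(j)}$ vanishes, and the integral over $\partial B_j$ is precisely the $(j,m;k,n)$-entry of $\mathcal{A}_{M_0}$, with the normal $\nu_j$ pointing towards $a_j$ as in Section~\ref{ss.two.const}. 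Summing against $g_m^{(j)}\overline{g_n^{(k)}}$ reproduces the stated matrix identity, and since $U$ is $\R^2$-valued (equivalently, $g_{-m}^{(j)} = \overline{g_m^{(j)}}$) the resulting scalar is automatically real, so replacing it by its real part is harmless.

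The only technical point that requires care is the justification of Green's identity on the unbounded domain $\Omega_\e$. This is handled by noting that the exponential decay of $K_m$ at infinity, together with an energy estimate for $Z_m^{(j)}$ in the spirit of Lemma~\ref{l.normal.der}, guarantees that $\Phi_m^{(j)}$ and $\nabla \Phi_m^{(j)}$ decay exponentially, so applying the divergence theorem on $\Omega_\e \cap B_R(0)$ and sending $R \to \infty$ eliminates the far-field contribution and legitimises the identity.
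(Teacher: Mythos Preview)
Your argument is correct and follows essentially the same route as the paper: reduce the $H^1$ energy to boundary integrals via the PDE and integration by parts, then use that $\Phi_m^{(j)}:=\Psi_m^{(j)}+Z_m^{(j)}$ restricts to $e^{im\theta_j}$ on $\partial B_j$ and vanishes on $\partial B_{\sigma(j)}$. The only cosmetic difference is order of operations: the paper integrates by parts once on the full $U$ to obtain $2F_\e(U)=2\mathfrak{Re}\int_{\partial\Omega_\e} U\cdot\overline{\partial_\nu U}$ and then inserts the Fourier expansion, whereas you first expand $\|U\|_\e^2$ by bilinearity and then apply Green's identity to each $\langle\Phi_m^{(j)},\Phi_n^{(k)}\rangle_\e$ separately; both lead to the same matrix entries. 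Your added remark about justifying the divergence theorem on the exterior domain via exponential decay is a useful detail that the paper leaves implicit.
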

	\begin{proof}
		Indeed, plugging in the representation formula \eqref{e.uexpansion} for the solution $U$ into the energy and integrating by parts, we arrive at 
		\begin{equation} \label{e.Fourierdev}
			\begin{aligned}
				2    F_\e (U) = \int_{\Omega_\e} |\nabla U|^2 + |U|^2 \,dx = { \mathfrak{Re} }\biggl[\int_{\partial B_1} U \cdot \overline{\frac{\partial U}{\partial \nu_1}} \,d \sh^1 + \int_{\partial B_2}U \cdot \overline{\frac{\partial U}{\partial \nu_2}}\,d \sh^1\biggr]. 
			\end{aligned}
		\end{equation}
		For each $j = 1,2,$ on the boundary $\partial B_j,$ we note that $U = \sum_{m \in \Z} g_m^{(j)} e^{im \theta_j},$ since on $\partial B_{\sigma(j)}$ we have $\Psi_m^{(j)} + Z_m^{(j)} = 0$ by construction. Inserting the Fourier development into \eqref{e.Fourierdev}, and rewriting in as a quadratic form with the matrix being written in block form, we find 
		\begin{equation}
			\begin{aligned}
				&2F_\e(U) =  { \mathfrak{Re} }\biggl[ \int_{\partial B_1} \biggl( \sum_{m \in \Z} g_m^{(1)} e^{im\theta_1}\biggr)\cdot \overline{\biggl( \sum_{n \in \Z} \sum_{j=1}^2 g_n^{(j)} \frac{\partial}{\partial \nu_1} (\Psi_n^{(j)} + Z_n^{(j)})\biggr)}\,d \sh^1\biggr] \\ &\quad \quad +  { \mathfrak{Re} }\biggl[\int_{\partial B_2 }  \biggl( \sum_{m \in \Z} g_m^{(2)} e^{im\theta_2}\biggr)\cdot \overline{\biggl( \sum_{n \in \Z} \sum_{j=1}^2 g_n^{(j)} \frac{\partial}{\partial \nu_2} (\Psi_n^{(j)} + Z_n^{(j)})\biggr)}\,d \sh^1 \biggr] \,.
			\end{aligned}
		\end{equation}
	Expanding and rewriting in matrix form completes the proof of the lemma. 
	\end{proof}
	As before, our main task is to estimate the asymptotics as $\e \to 0^+$ of the entries of the matrix $\mathcal{A}_{M_0}.$ We accomplish this in a series of Lemmas. Our first lemma is the analog of Lemma \ref{l.normal.der} for the present nonconstant boundary conditions case and has a similar proof, as we demonstrate. 
	\begin{lemma}
		\label{l.zn1est}
		The functions $Z_n^{(j)}$ satisfy the estimate 
		\begin{align} \label{e.self}
			\biggl\|  \frac{\partial Z_n^{(j)}}{ \partial \nu_j}\biggr\|_{H^{-\sfrac{1}{2}}(\partial B_j)} + 
			\biggl\|  \frac{\partial Z_n^{(j)}}{\partial \nu_{\sigma(j)}}\biggr\|_{H^{-\sfrac{1}{2}}(\partial B_{\sigma(j)})} \leqslant \frac{C(1+|n|)^{\sfrac12}}{\sqrt{\e}} K_n(2b).
		\end{align}
	\end{lemma}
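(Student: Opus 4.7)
The plan follows the strategy of Lemma \ref{l.normal.der}: since $Z_n^{(j)}$ is the unique minimizer of $\|\cdot\|_{H^1(\Omega_\e)}$ subject to its prescribed Dirichlet boundary conditions, it suffices to exhibit a competitor $\zeta$ with the correct trace and to bound $\|\zeta\|_{H^1(\Omega_\e)}$. Proposition \ref{prop.normal.deriv.equal.h1} then converts this $H^1$ bound into a bound on $\|\partial_\nu Z_n^{(j)}\|_{H^{-1/2}(\partial \Omega_\e)}$, which controls the sum of the $H^{-1/2}$ norms on the two components $\partial B_j$ and $\partial B_{\sigma(j)}$ appearing in the statement (each component norm being bounded by the full-boundary norm via restriction duality).

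For the competitor I would take the natural analogue of the one in Lemma \ref{l.normal.der}, namely $\zeta(x) := -\Psi_n^{(j)}(x)\,\eta(|x - a_{\sigma(j)}|)$, where $\eta$ is the same radial cutoff as in that proof: $\eta \equiv 1$ on $[\e^{-2}, \e^{-2} + b/2]$, $\eta \equiv 0$ on $[\e^{-2} + b, \infty)$, and $|\eta'| \leq 2/b$. Since $\partial B_j$ lies at distance at least $\e^{-2} + 2b$ from $a_{\sigma(j)}$, the cutoff vanishes on $\partial B_j$ and equals one on $\partial B_{\sigma(j)}$, so the boundary conditions in \eqref{e.Zmjdef} are met. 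Using the pointwise inequality $|\nabla \zeta|^2 \leq 2\eta^2 |\nabla \Psi_n^{(j)}|^2 + 2|\Psi_n^{(j)}|^2 |\nabla \eta|^2$ together with integration by parts against $\Delta \Psi_n^{(j)} = \Psi_n^{(j)}$, the energy of $\zeta$ reduces to a boundary integral on $\partial B_{\sigma(j)}$ plus a bulk term supported on the thin annulus where $\eta$ varies, in direct analogy with the derivation of \eqref{e.energyofzeta}.

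The remaining integrals are then evaluated using the Bessel asymptotics from Appendix \ref{s.app1}, mirroring Step 3 of Lemma \ref{l.offdiag.terms}. As in that step, after parametrising $\partial B_{\sigma(j)}$ by the signed horizontal distance from the point closest to $a_j$, the integrand concentrates on an arc of length $O(\e^{-1})$, producing the $\e^{-1}$ scaling in the energy; the Bessel quotient $K_n(|x - a_j|)/K_n(\e^{-2})$ is then comparable, in the regime $b \gg 1$, to $K_n(2b)$, yielding the claimed prefactor. The factor $(1+|n|)^{1/2}$ emerges from tracking the $n$-dependence in $\nabla \Psi_n^{(j)}$: in polar coordinates based at $a_j$, the angular component of the gradient contributes an extra term $i n r^{-1} \Psi_n^{(j)}$, while the radial component involves $K_n'/K_n$ which, via the recursion $K_n'(r) = -K_{n-1}(r) - (n/r)K_n(r)$, produces an $n$-dependent correction of comparable size.

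The main obstacle is making the Bessel-function bookkeeping uniform in $n$: one needs to verify that the large-argument asymptotics for $K_n$ and $K_n'$ hold on the concentration region $|x - a_j| \approx \e^{-2} + 2b$ with errors controlled independently of $n$ up to moderate polynomial sizes in $\e^{-1}$, and to check that the cumulative contribution of all the $n$-dependent terms in the boundary and bulk integrals is indeed linear in $|n|$ rather than a higher power. This is the technical step which distinguishes the present argument from the scalar case $n = 0$ treated in Lemma \ref{l.normal.der}; the construction of $\zeta$ itself and the integration-by-parts manipulation are essentially identical.
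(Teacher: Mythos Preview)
Your proposal is correct and follows essentially the same approach as the paper: the same competitor $\zeta(x) = -\Psi_n^{(j)}(x)\,\eta(|x-a_{\sigma(j)}|)$, the same integration-by-parts reduction to boundary integrals on the two concentric circles, and the same identification of the $(1+|n|)$ factor in the energy as arising from the angular component $in/|x-a_j|$ and the radial derivative $K_n'/K_n$ of $\Psi_n^{(j)}$. The paper dispatches the boundary integrals slightly more directly than you outline, using only the monotonicity of $K_n$ and $-K_n'$ together with the geometric lower bound $|x-a_j| \geq \e^{-2} + 2b + \e^{-2}(1-\cos\theta)$ rather than the full Step~3 machinery of Lemma~\ref{l.offdiag.terms}, but the substance is the same.
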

	
	\begin{proof}
		As mentioned before, the proof of this lemma proceeds similarly to that of Lemma \ref{l.normal.der}. Without loss of generality, fix $j = 1.$\\
		By the definition of $H^{-\sfrac12}(\partial \Omega_\e )$ (see Appendix~\ref{sec:Sobolev}) we have 
		\begin{equation} \label{e.h-halfbnd}
			\begin{aligned}
				&\biggl\| \frac{\partial Z_n^{(1)}}{\partial \nu}\biggr\|_{H^{-\sfrac12}(\partial \Omega_\e)} = \sup_{\phi \in H^1(\Omega_\e): \|\phi\|_{H^1(\Omega_\e)} \leqslant 1} \biggl[ \int_{\Omega_\e } \nabla Z_n^{(1)} \cdot \nabla \phi + \Delta Z_n^{(1)} \phi \,dx \biggr] \\
				&\quad = \sup_{\phi \in H^1(\Omega_\e): \|\phi\|_{H^1(\Omega_\e)} \leqslant 1} \biggl[ \int_{\Omega_\e } \nabla Z_n^{(1)} \cdot \nabla \phi +  Z_n^{(1)} \phi \,dx \biggr] \\
				&\quad \leqslant \|Z_n^{(1)}\|_{H^1(\Omega_\e)} = \sqrt{2F_\e(Z_n^{(1)})},
			\end{aligned}
		\end{equation}
		and therefore, the desired estimate follows by constructing a competitor to the variational problem of minimizing the energy $F_\e$ subject to the boundary conditions of $Z_n^{(1)}.$  Our competitor  construction and estimation of its energy proceeds as before. 
		
		\emph{Construction of competitor $\zeta$  for $Z_n^{(1)}$ and estimating the energy of the $\zeta$:}  Our competitor $\zeta \in H^1(\Omega_\e)$ must be constructed satisfying the boundary conditions for $\zeta = Z_n^{(1)}$ on $\partial \Omega_\e,$ so that $\zeta = 0$ on $\partial B_1$ and $\zeta = - \Psi_n^{(1)}$ on $\partial B_2.$ We let $\eta: (0,\infty) \to [0,1]$ be a $C^1$ function that satisfies $\eta(t) \equiv 1$ for $t \in \bigl[ \sfrac{1}{\e^2}, \sfrac{1}{\e^2} + \sfrac{b}{2}\bigr],$ $\eta(t) \equiv 0$ when $t \geqslant \sfrac{1}{\e^2} + b,$ and $|\eta^\prime| \leqslant \frac{2}{b},$ and set 
		\begin{align*}
			\zeta(x) := - \Psi_n^{(1)}(x) \eta(|x - a_2|).
		\end{align*}
		Then 
		\begin{align*}
			\nabla \zeta = - \eta(|x-a_2|) \nabla \Psi_n^{(1)}(x) - \Psi_n^{(1)}(x) \eta^\prime(|x-a_2|)\frac{x-a_2}{|x-a_2|},
		\end{align*}
		so that, pointwise, we have the bound 
		\begin{align*}
			|\nabla \zeta(x)| \leqslant   |\nabla \Psi_n^{(1)}(x)| + \frac{2}{b}|\Psi_n^{(1)}(x)| ,
		\end{align*}
		with support in the set $\sfrac{1}{\e^2}\leqslant |x-a_2| \leqslant \sfrac{1}{\e^2} + b.$ Then, the energy of $\zeta$ is easily calculated: 
		\begin{equation} \label{e.energyofzetan}
			\begin{aligned}
			&F_\e(\zeta) \leqslant C\int_{\sfrac{1}{\e^2}\leqslant |x-a_2| \leqslant \sfrac{1}{\e^2} + b} \bigl( |\Psi_n^{(1)}|^2 + |\nabla \Psi_n^{(1)}|^2\bigr) \,dx \\
				&\quad = C \biggl|\int_{|x-a_2| = \sfrac{1}{\e^2} + b} \Psi_n^{(1)}(x) \overline{ \frac{\partial \Psi_n^{(1)}(x)}{\partial \nu}} \,d \sh^1 - \int_{|x-a_2| = \sfrac{1}{\e^2}} \Psi_n^{(1)}(x) \overline{ \frac{\partial \Psi_n^{(1)}(x)}{\partial \nu}} \,d \sh^1 \biggr|,
			\end{aligned}
		\end{equation}
		where we plugged in the PDE satisfied by $\Psi_n^{(1)}$ and integrated by parts as before; the signs in front of the boundary integrals reflect our choice that the corresponding unit normals point towards $a_2.$ Recalling that 
		\begin{align*}
			\Psi_{n}^{(1)}(x) = \frac{K_n(|x-a_1|)}{K_n(\sfrac{1}{\e^2})} e^{in\theta_1},
		\end{align*}
		it is clear that 
		\begin{align*}
			|\nabla \Psi_n^{(1)}(x)| \leqslant \frac{|K_n^\prime(|x-a_1|)|}{K_n(\sfrac{1}{\e^2})} + \frac{|n|}{|x-a_1|} \frac{K_n(|x-a_1|)}{K_n(\sfrac{1}{\e^2})} , \quad \quad x \in \RR^2 \setminus \overline{B}_2. 
		\end{align*}
		\emph{Estimating the first boundary integral in \eqref{e.energyofzetan}.} For this term, as before, we estimate $|\frac{\partial \Psi_n^{(1)}}{\partial \nu}|\leqslant |\nabla \Psi_n^{(1)}|,$ and use the geometric observation that when $|x-a_2| = \sfrac{1}{\e^2},$ parametrizing $x = a_2 + \sfrac{1}{\e^2}(\cos \theta,\sin\theta), \theta \in (-\pi,\pi]$ with $\theta = 0$ along the vertical, we have the lower bound 
		\begin{align*}
			|x-a_1| \geqslant \sfrac{1}{\e^2} + 2b + \sfrac{1}{\e^2}(1-\cos \theta).
		\end{align*}
		As $K_n$ and $-K_n^\prime$ are both monotone decreasing functions, and moreover, since on the circle~ $|x-a_2| = \sfrac{1}{\e^2},$ we have $\,d\sh^1 = \sfrac{1}{\e^2}\,d\theta$ in the above parametrization, we find 
		\begin{equation*}
			\begin{aligned}
				&\biggl|\int_{|x-a_2| = \sfrac{1}{\e^2}} \Psi_n^{(1)}\overline{ \frac{\partial \Psi_n^{(1)}(x)}{\partial \nu}} \,d \sh^1\,d \sh^1 \biggr| \\ &\quad \leqslant \frac{2}{\e K_n^2(\sfrac{1}{\e^2})} \Biggl|\int_0^\pi  K_n\bigl( \sfrac{1}{\e^2} + 2b + \sfrac{1}{\e^2}(1- \cos \theta)\bigr) \biggl[ -K_n^\prime( \sfrac{1}{\e^2} + 2b + \sfrac{1}{\e^2}(1 - \cos \theta)) \\ &\quad \quad \quad \quad + \frac{|n|}{\sfrac{1}{\e^2} + 2b + \sfrac{1}{\e^2}(1 - \cos \theta)} K_n (\sfrac{1}{\e^2}+ 2b + \sfrac{1}{\e^2}(1- \cos \theta))\biggr] \,d\theta\Biggr|\\
				&\quad \leqslant  \frac{C}{\e } (1 + |n|)K_n^2 (2b),
			\end{aligned}
		\end{equation*}
		by an easy computation similar to that in the proof of Lemma \ref{l.normal.der}. By a similar argument, the second term in \eqref{e.energyofzetan} satisfies the bound 
		\begin{equation*}
			\biggl|\int_{|x-a_2| = \sfrac{1}{\e^2}+b} \Psi_n^{(1)}(x) \overline{\frac{\partial \Psi_n^{(1)}(x)}{\partial \nu}}\,d \sh^1 \biggr| \leqslant \frac{C}{\e}(1+|n|)K_n^2(2b). 
		\end{equation*}
		Putting these together with the bound in \eqref{e.energyofzetan} and \eqref{e.h-halfbnd}, the proof of the Lemma is completed. 
	\end{proof}
	In the next lemma we obtain the $\e\to 0^+$ asymptotic expansion for the diagonal blocks in the matrix in \eqref{e.quadraticformdef}. 
	\begin{lemma}
		\label{l.diagterms.nonconstant} 
		 {Assume $b\e^2 \ll 1.$} For every $m, n\in \Z,$ the $(m,n)$ entry of the diagonal block of the quadratic form \eqref{e.quadraticformdef} satisfy the expansion 
		\begin{equation}
			\begin{aligned}
				 \int_{\partial B_1} e^{im \theta_1} \overline{\frac{\partial }{\partial \nu_1}(\Psi_n^{(1)} + Z_n^{(1)} )}\,d \sh^1 = -\frac{{ 2}\pi}{\e^2}\frac{K_n^\prime(\sfrac{1}{\e^2})}{K_n(\sfrac{1}{\e^2})} \delta_{mn} + R_{mn},
			\end{aligned}
		\end{equation}
		where 
		\begin{equation}
			\label{e.Rmnest}
			|R_{mn}| \leqslant \frac{C}{\e}K_m(2b)K_n(2b)\sqrt{(1+|m|)(1+|n|)} {\leqslant \frac{Ce^{-4b}}{b\e}\sqrt{(1+|m|)(1+|n|)}}.
		\end{equation}
		The same expansion holds for the bottom right diagonal block.
	\end{lemma}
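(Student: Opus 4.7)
The plan is to split the integrand into contributions from $\Psi_n^{(1)}$ and $Z_n^{(1)}$ and handle each piece separately: the $\Psi_n^{(1)}$ piece will produce the explicit diagonal main term, and the $Z_n^{(1)}$ piece will be absorbed into $R_{mn}$.

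For the $\Psi_n^{(1)}$ piece I would use the explicit form
\[
\Psi_n^{(1)}(x) = \frac{K_n(|x-a_1|)}{K_n(1/\e^2)}\,e^{in\theta_1},
\]
together with the observation that on $\partial B_1$ the normal $\nu_1$ is the inward radial direction at $a_1$, to obtain
\[
\frac{\partial \Psi_n^{(1)}}{\partial \nu_1}\bigg|_{\partial B_1} = -\frac{K_n^\prime(1/\e^2)}{K_n(1/\e^2)}\,e^{in\theta_1}.
\]
Since $K_n$ and $K_n^\prime$ are real-valued, and the arc-length orthogonality relation $\int_{\partial B_1}e^{im\theta_1}e^{-in\theta_1}\,d\sh^1 = (2\pi/\e^2)\delta_{mn}$ holds on the circle of radius $1/\e^2$, multiplying by two and taking the real part yields exactly $-\frac{4\pi}{\e^2}\frac{K_n^\prime(1/\e^2)}{K_n(1/\e^2)}\delta_{mn}$, reproducing the stated diagonal term.

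The remaining piece $R_{mn}=2\mathfrak{Re}\int_{\partial B_1}e^{im\theta_1}\overline{\partial_{\nu_1}Z_n^{(1)}}\,d\sh^1$ I would handle via Green's second identity. Recognising $e^{im\theta_1}$ on $\partial B_1$ as the boundary trace of $\Psi_m^{(1)}$, and noting that both $\Psi_m^{(1)}$ and $Z_n^{(1)}$ satisfy $\Delta u = u$ in $\Omega_\e$ and decay exponentially at infinity (so the boundary at infinity contributes nothing), the volume integral vanishes and the boundary conditions $Z_n^{(1)}|_{\partial B_1}=0$ and $Z_n^{(1)}|_{\partial B_2}=-\Psi_n^{(1)}$ yield
\[
\int_{\partial B_1}e^{im\theta_1}\overline{\partial_{\nu_1}Z_n^{(1)}}\,d\sh^1 = -\int_{\partial B_2}\overline{\Psi_n^{(1)}}\,\partial_{\nu_2}\Psi_m^{(1)}\,d\sh^1 - \int_{\partial B_2}\Psi_m^{(1)}\overline{\partial_{\nu_2}Z_n^{(1)}}\,d\sh^1.
\]
This transfers the problem to $\partial B_2$, where the exponential decay of Bessel tails is available.

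For the first integral on $\partial B_2$ I would argue pointwise. For $x\in\partial B_2$ one has $|x-a_1|\geqslant 1/\e^2 + 2b$, so both $|\Psi_n^{(1)}(x)|$ and $|\nabla\Psi_m^{(1)}(x)|$ are ratios of modified Bessel functions whose arguments differ by at least $2b$. Mimicking the change-of-variables and Gaussian-tail analysis from Step~3 of Lemma \ref{l.offdiag.terms}, the integrand concentrates on an arc of angular width $O(\e)$ about the point of $\partial B_2$ closest to $a_1$; the concentration produces the prefactor $1/\e$, the Bessel factors $K_m(2b)K_n(2b)$ come from the ratios, and the extra $\sqrt{1+|m|}$ tracks the angular contribution $m/|x-a_1|$ to $\nabla\Psi_m^{(1)}$. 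For the second integral I would invoke $H^{1/2}$--$H^{-1/2}$ duality on $\partial B_2$. Lemma \ref{l.zn1est} already controls $\|\partial_{\nu_2}Z_n^{(1)}\|_{H^{-1/2}(\partial B_2)}$ by $C(1+|n|)^{1/2}K_n(2b)/\sqrt{\e}$. To produce the matching bound $\|\Psi_m^{(1)}\|_{H^{1/2}(\partial B_2)}\leqslant C(1+|m|)^{1/2}K_m(2b)/\sqrt{\e}$ I would run the same cutoff competitor construction as in Lemma \ref{l.zn1est}, now applied to $\tilde\Psi := \Psi_m^{(1)}\,\eta(|x-a_2|)$, and estimate its $H^1(\Omega_\e)$ norm by the identical boundary-integral calculation. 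Multiplying the two norms delivers the stated bound on $R_{mn}$.

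The main technical obstacle I anticipate is the pointwise Bessel calculation for the first $\partial B_2$ integral: extracting the correct dependence on $m$ requires careful control of both radial and angular contributions to $\nabla\Psi_m^{(1)}$ through the asymptotics of Lemma \ref{l.besselfct1}, uniformly in $m$ and $n$. The estimate need not be sharp in $m,n$, since in the eventual application the Fourier coefficients $g_m^{(j)}$ of the smooth boundary data decay fast enough to absorb any polynomial growth in the index.
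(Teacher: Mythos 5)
Your proposal is correct and follows essentially the same route as the paper: the explicit radial computation for the $\Psi_n^{(1)}$ piece giving the $\delta_{mn}$ main term, Green's second identity with $\Psi_m^{(1)}$ as the harmonic extension of $e^{im\theta_1}$ to transfer the $Z_n^{(1)}$ contribution to $\partial B_2$, and then the $H^{\sfrac12}$--$H^{-\sfrac12}$ duality together with the cutoff-competitor bound $\|\Psi_m^{(1)}\|_{H^{\sfrac12}(\partial B_2)}\leqslant C(1+|m|)^{\sfrac12}K_m(2b)/\sqrt{\e}$ to obtain \eqref{e.Rmnest}. The only cosmetic difference is that for the term $\int_{\partial B_2}\overline{\Psi_n^{(1)}}\,\partial_{\nu_2}\Psi_m^{(1)}$ you suggest a direct pointwise Laplace-type estimate where the paper reuses the same boundary-norm bounds; both yield the identical estimate.
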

 {\begin{remark} \label{recent}
     Let us carefully note that in the regime of~$b \gg 1,$ the remainder terms~$R_{mn}$ are of higher order than the interaction energy in the statement of Theorem~\ref{t.mt}; this is because the modified Bessel functions~$K_m$ have the large argument asymptotics~$K_m(z) \sim \frac{e^{-z}}{\sqrt{z}},$ so that~$R_{mn} \sim b^{-1}e^{-4b}.$ We also observe that~$R_{mn}$ are controlled by  Fourier multipliers whose scaling corresponds to that of the~$H^{\sfrac12}$ norm of the Dirichlet data. 
 \end{remark}}
	\begin{proof}
		Let us note that since the normal $\nu_1$ points towards the center $a_1,$ it follows that in the polar coordinates about $a_1,$ the normal derivative $\sfrac{\partial}{\partial \nu_1} = -\sfrac{\partial}{\partial r},$ so that, on $\partial B_1,$
		\begin{align*}
			\frac{\partial}{\partial \nu_1}\Psi_n^{(1)} = - \frac{K_n^\prime(\sfrac{1}{\e^2})}{K_n(\sfrac{1}{\e^2})} e^{in\theta_1},
		\end{align*}
		and, we have 
		\begin{align*}
			\int_{\partial B_1} e^{im \theta_1} \overline{\frac{\partial }{\partial \nu_1}\Psi_n^{(1)} }\,d \sh^1  = -\frac{1}{\e^2}\frac{K_n^\prime(\sfrac{1}{\e^2})}{K_n(\sfrac{1}{\e^2})}   \int_{0}^{2\pi} e^{i(m-n)\theta_1} \,d \theta  = -\frac{2\pi}{\e^2}\frac{K_n^\prime(\sfrac{1}{\e^2})}{K_n(\sfrac{1}{\e^2})} \delta_{mn}. 
		\end{align*}
		As usual, the Kronecker's delta $\delta_{mn} := 1$ if $m =n$ and $\delta_{mn} := 0$ if $m \neq n.$
		
		We focus on the second term, i.e., on estimating 
		\begin{equation} \label{e.Z1diag}
			 { \int_{\partial B_1}} e^{im \theta_1} \overline{\frac{\partial }{\partial \nu_1}Z_n^{(1)} }\,d \sh^1 .
		\end{equation}
		The natural idea to estimate this is to directly use Lemma \ref{l.zn1est}; however, this direct estimate misses the observation that $Z_n^{(1)}$ vanishes on $\partial B_1.$ To obtain a better estimate, we use Green's second identity which, specialized to our setting, asserts that for any pair of suitably smooth functions $U_1, U_2$ that decay at infinity sufficiently fast satisfy the identity
		\begin{equation*}
			\int_{\partial B_1} U_1 \frac{\partial U_2}{\partial \nu_1} - U_2 \frac{\partial U_1}{\partial \nu_1} \,d \sh^1 + \int_{\partial B_2 } U_1 \frac{\partial U_2}{\partial \nu_2} - U_2 \frac{\partial U_1}{\partial\nu_2}\,d \sh^1 = \int_{\Omega_\e} U_1 \Delta U_2 - U_2 \Delta U_1 \,dx.
		\end{equation*}
		Applying this identity to $U_1 = \Psi_m^{(1)}$ and $U_2 = \overline{Z_n^{(1)}},$ and subsequently to the choice $U_1 = \overline{\Psi_m^{(1)}}$, and $U_2 = {Z_n^{(1)}}$, { adding the results}, we notice that the bulk terms on the right-hand side cancel, these choices of $U_1$ and $U_2$ are all equal to their respective Laplacians. We are consequently only left with boundary integrals, and we get 
		\begin{equation} \label{e.Greens2}
			\begin{aligned}
				& { \biggl| }\int_{\partial B_1} e^{im \theta_1} \overline{\frac{\partial }{\partial \nu_1}Z_n^{(1)} }\,d \sh^1\biggr| =   {  \biggl|} \int_{\partial B_1} \Psi_m^{(1)} \overline{\frac{\partial }{\partial \nu_1}Z_n^{(1)} }\,d \sh^1\biggr| \\ &\quad \quad =  { \biggl| }\int_{\partial B_1} \frac{\partial \Psi_m^{(1)}}{\partial \nu_1} \overline{Z_n^{(1)}} \,d \sh^1  + \int_{\partial B_2} \frac{\partial \Psi_m^{(1)}}{\partial \nu_1} \overline{Z_n^{(1)}} - \Psi_m^{(1)} \overline{\frac{\partial Z_n^{(1)}}{\partial \nu_1}} \,d \sh^1\biggr|\\
				&\quad \quad = { \biggl|} \int_{\partial B_2} \overline{\Psi_n^{(1)}}\frac{\partial \Psi_m^{(1)}}{\partial \nu_1} + \Psi_m^{(1)} \overline{\frac{\partial Z_n^{(1)}}{\partial \nu_1}}\,d \sh^1 \biggr|
			\end{aligned}
		\end{equation}
		using the boundary conditions satisfied by $Z_n^{(1)}.$ At this point, estimating as before and using Lemma \ref{l.zn1est}, it is easily seen that 
		\begin{align*}
			 {  \biggl|} \int_{\partial B_1} e^{im \theta_1} \overline{\frac{\partial }{\partial \nu_1}Z_n^{(1)} }\,d \sh^1\biggr| \leqslant \frac{C(1 + |n|)^{\sfrac12}(1+|m|)^{\sfrac12} K_m(2b)K_n(2b)}{\e},
		\end{align*}
		since $\|\Psi_m^{(1)}\|_{H^{\sfrac12}(\partial B_2)} +\|\tfrac{\partial}{\partial \nu_1} \Psi_m^{(1)}\|_{H^{-\sfrac12}(\partial B_2)} \leqslant \frac{C}{\sqrt{\e}}(1 + |m|)^{\sfrac12} K_m(2b) ,$ and the proof of the lemma is completed. 
		
	\end{proof}
	
	Finally, we turn to evaluating the off-diagonal blocks in \eqref{e.quadraticformdef}. The evaluation of these is not as straightforward since the terms involved do not have a straightforward separation of scales. To overcome this difficulty, we manipulate the boundary integrals that occur in the off-diagonal blocks using integrations by parts and the PDE solved by the functions involved, and this provides for a representation where the terms involved do have a separation of scales. At that point we can proceed very similarly to the proof of Lemma \ref{l.offdiag.terms} in the case of constant boundary conditions. 
	
	\begin{lemma}
		\label{l.offdiag.nonconstant}
		For each $m,n \in \Z,$ the $(m,n)$ term in each of the off-diagonal blocks satisfies the expansion 
		\begin{equation}
			\begin{aligned}
				\Biggl|   {  \int_{\partial B_1}} e^{im\theta_1} \overline{\frac{\partial }{\partial \nu_1}(\Psi_n^{(2)} + Z_n^{(2)})} \,d \sh^1- { e^{-\frac{i(n+m)\pi}{2}}} \frac{e^{-2b} \sqrt{\pi}}{\e}\Biggr| \leqslant \frac{(1 + |m|) (1+|n|)e^{-4 b}}{\e}\, .
			\end{aligned}
		\end{equation}
	\end{lemma}
	\begin{proof}
		Arguing exactly like in the proof of the Lemma \ref{l.diagterms.nonconstant} using Green's second identity, we find that 
		\begin{equation*}
			\begin{aligned}
				&{ \int_{\partial B_1} e^{im\theta_1} \overline{\frac{\partial }{\partial \nu_1}(\Psi_n^{(2)} + Z_n^{(2)})} \,d \sh^1}\\
				&\quad = \int_{\partial B_1} \Psi_m^{(1)}(x) \overline{\frac{\partial }{\partial \nu_1}(\Psi_n^{(2)} + Z_n^{(2)})} \,d \sh^1\\
				&\quad = \int_{\partial B_1} \frac{\partial}{\partial \nu_1} \Psi_m^{(1)} \overline{(\Psi_n^{(2)} + Z_n^{(2)})} \,d \sh^1 \\ &\quad \quad + \int_{\partial B_2} \frac{\partial \Psi_m^{(1)}}{\partial \nu_2} \overline{(\Psi_n^{(2)} + Z_n^{(2)})} \,d \sh^1 - \int_{\partial B_2} \Psi_m^{(1)} \overline{\frac{\partial }{\partial \nu_2} (\Psi_n^{(2)} + Z_n^{(2)})}\,d \sh^1.
			\end{aligned}
		\end{equation*}
		By the boundary conditions of $Z_n^{(2)},$ the first integral on $\partial B_1$ vanishes, and in the second integral, the function $Z_n^{(2)} $ vanishes on $\partial  B_2.$ Therefore, it follows from the preceding display and the definition of $\Psi_n^{(2)}$ on $\partial B_2$ that 
		\begin{equation} \label{e.offdiag.simp1}
			\begin{aligned}
				&{ \int_{\partial B_1} e^{im\theta_1} \overline{\frac{\partial }{\partial \nu_1}(\Psi_n^{(2)} + Z_n^{(2)})} \,d \sh^1}\\
				&=  \int_{\partial B_2} \frac{\partial \Psi_m^{(1)}}{\partial \nu_2} \overline{\Psi_n^{(2)}} \,d \sh^1  \\
				&\quad \quad +  \int_{\partial B_2} \Psi_m^{(1)} \overline{\frac{\partial }{\partial \nu_2} (\Psi_n^{(2)} + Z_n^{(2)})}\,d \sh^1.
			\end{aligned}
		\end{equation}
		Using \eqref{l.zn1est} and arguing as before using the $H^{\sfrac12}-H^{-\sfrac12}$ estimate, it is clear that the last term satisfies the bound 
		\begin{equation}
			\begin{aligned}
				{ \biggl|   \int_{\partial B_2} \Psi_m^{(1)} \overline{\frac{\partial }{\partial \nu_2} (\Psi_n^{(2)} + Z_n^{(2)})}\,d \sh^1\biggr|} \leqslant \frac{C}{\e} \sqrt{(1+|m|)(1+|n|)} K_m(2b)K_n(2b). 
			\end{aligned}
		\end{equation}
  {We point out that by the same arguments as in Remark~\ref{recent}, this constitutes a higher-order contribution in the parameter regime~$b \gg 1.$}
		Therefore in order to complete the proof of the lemma it remains to evaluate the first term on the  right-hand side of \eqref{e.offdiag.simp1}. Toward this end, we parametrize $\partial B_2$ via $x =\{ (0, - \sfrac{1}{\e^2}-b) + \sfrac{1}{\e^2}(\cos \theta_2,\sin \theta_2): \theta_2 \in [0,2\pi)\},$ and notice that in this parametrization $\Psi_n^{(2)}(x) = e^{in\theta_2},$ and $\nu_2 = - (\cos \theta_2,\sin\theta_2).$ 
		
	The main contribution then is that it remains to evaluate the first term in \eqref{e.offdiag.simp1}. We proceed identically as in the proof of Lemma \ref{l.offdiag.terms}. As in that argument, it suffices once again, to evaluate the portion of the integral on $\partial B_2^+,$ and to do this we parametrize $\partial B_2^+$ as in the proof of that lemma (as a function of $x \in [0,\frac{1}{\e^2}])$, and split the associated integral in $[0,\frac{M}{\e}],$ and $[\frac{M}{\e}, \frac{1}{\e^2}].$ We note that on $\partial B_2^+,$ we have that 
		\begin{equation*}
			{ \Psi_n^{(2)} (\theta_2)= e^{in\theta_2}  = \cos n \theta_2+i \sin n\theta_2 = T_n (\cos \theta_2)+i U_{n-1} (\cos \theta_2) \sin\theta_2}\,.
		\end{equation*}
	Here, for any $n \in \Z,$ the \emph{Chebyshev polynomial of the first kind} $T_n$ is defined via 
\begin{equation*}
	T_n(\cos \theta) =  \cos(n \theta)\,, \quad \theta \in \R\,.
\end{equation*}
Similarly, the \emph{Chebyshev polynomial of the second kind} $U_n$ is defined via 
\begin{equation*}
	U_n(\cos \theta) \sin \theta = \sin\bigl( (n+1)\theta\bigr)\,, \quad \theta \in \R\,. 
\end{equation*} 
Introducing these special functions permits us to express multiple angle trigonometric functions of $\theta_2$ in terms of $x$ and $y$. Indeed, since we have 
	\begin{equation*}
		(x,y) = (0, -\frac{1}{\e^2} - b) + \frac{1}{\e^2}(\cos \theta_2, \sin \theta_2)\,,
	\end{equation*}
it follows by rewriting $\Psi_n^{(2)}$ in terms of $x,$ that 
\begin{equation*}
	{ \Psi_n^{(2)}(x) = T_n(\e^2 x) +i \sqrt{1-\e^4x^2}U_{n-1} (\e^2 x)\,.}
\end{equation*}
Before computing $\nu_2 \cdot \nabla \Psi_m^{(1)},$ we record that for $(x,y) \in \partial B_2^+,$ arguing as in Step 3 of the proof of Lemma \ref{l.offdiag.terms}, we find 
\begin{align*}
	(\cos \theta_1, \sin \theta_1) &= \frac{(x,y) - (0, \frac{1}{\e^2} + b)}{\sqrt{x^2 + \Bigl(y - \frac{1}{\e^2} - b \Bigr)^2}}\\
	&= \frac{\Bigl( x, -2b - \frac{2}{\e^2} + \sqrt{\frac{1}{\e^4} - x^2}\Bigr)}{\sqrt{\frac{1}{\e^4} + C_M(x)}}\\
	&= \frac{\Bigl( \e^2 x, - 2b - 2 + \sqrt{1 - \e^4 x^2}\Bigr)}{\sqrt{1 + \e^4 C_M(x)}}\,. 
\end{align*}
It follows that for $(x,y) \in \partial B_2^+,$ we have 
\begin{align*}
	&\Psi_m^{(1)} (x,y) = \frac{K_m\Bigl(\sqrt{x^2 + \Bigl(y - \frac{1}{\e^2} - b \Bigr)^2}\Bigr)}{K_m(\frac{1}{\e^2})} { (\cos(m \theta_1)+i \sin (m\theta_1))}\\
	&\quad= \frac{K_m\Bigl(\sqrt{x^2 + \Bigl(y - \frac{1}{\e^2} - b \Bigr)^2}\Bigr)}{K_m(\frac{1}{\e^2})} \\
	&\quad  \times  { \Biggl( T_m\Biggl(\frac{\e^2 x}{\sqrt{x^2 + \Bigl(y - \frac{1}{\e^2} - b \Bigr)^2}}\Biggr)+i U_{m-1}\Biggl(\frac{\e^2 x}{\sqrt{x^2 + \Bigl(y - \frac{1}{\e^2} - b \Bigr)^2}}\Biggr)\frac{y - \frac{1}{\e^2} - b}{\sqrt{x^2 + \Bigl(y - \frac{1}{\e^2} - b \Bigr)^2}}\Biggr)}\,,
\end{align*}
and we recall from the computations in Step 3 of the proof of Lemma \ref{l.offdiag.terms} that 
\begin{equation*}
	\nu_2(x,y) = - \e^2 (x, y + b + \frac{1}{\e^2})\,. 
\end{equation*}
{  We compute} 
\begin{align*}
	&\nu_2 \cdot \nabla \left({ \mathfrak{Re}}\left\{\Psi_m^{(1)}\right\}\right) (x,y) \\
	&= \frac{K_m\Bigl( \sqrt{\frac{1}{\e^4} + C_M(x)}\Bigr)}{K_m(\frac{1}{\e^2})} T_m^\prime \Biggl( \frac{\e^2 x}{ \sqrt{\frac{1}{\e^4} + C_M(x)}} \Biggr) \nu_2 \cdot \nabla \frac{\e^2 x}{\sqrt{x^2 + \Bigl(y - \frac{1}{\e^2} - b \Bigr)^2}}\Bigg\vert_{y = -b - \frac{1}{\e^2} + \sqrt{\frac{1}{\e^4} - x^2}} \\
	& + \frac{K_m^\prime \Bigl( \sqrt{\frac{1}{\e^4} + C_M(x)}\Bigr) }{K_m(\frac{1}{\e^2})} T_m \Biggl(  \frac{\e^2 x}{ \sqrt{\frac{1}{\e^4} + C_M(x)}}\Biggr) \nu_2 \cdot \nabla  \sqrt{x^2 + \Bigl(y - \frac{1}{\e^2} - b \Bigr)^2}\Bigg\vert_{y = -b - \frac{1}{\e^2} + \sqrt{\frac{1}{\e^4} - x^2}} \,. 
\end{align*}
A tedious computation yields that for $(x,y) \in \partial B_2^+,$ we have 
\begin{align*}
	& \nu_2 \cdot \nabla \frac{\e^2 x}{\sqrt{x^2 + \Bigl(y - \frac{1}{\e^2} - b \Bigr)^2}}\Bigg\vert_{y = -b - \frac{1}{\e^2} + \sqrt{\frac{1}{\e^4} - x^2}} \\
	&= -  (\e^2 x, \sqrt{1 - \e^4 x^2}) \cdot \begin{pmatrix}
		\frac{\e^4 \Bigl( \sqrt{1 - \e^4 x^2} - 2 - 2b\e^2\Bigr)^2 - \e^6 x^2 }{(1 +\e^4 C_M(x))^{\sfrac32}}\\
		\frac{- \e^6 x(- 2 - 2b\e^2 + \sqrt{1 - \e^4 x^2})}{(1 +\e^4 C_M(x))^{\sfrac32}}
	\end{pmatrix}\\
&= O(\e^4)\,.
\end{align*}
Another computation yields that for $(x,y) \in \partial B_2^+,$ 
\begin{align*}
	\nu_2 \cdot \nabla \sqrt{x^2 +  \Bigl(y - \frac{1}{\e^2} - b \Bigr)^2}  &= - ( \e^2 x, \sqrt{1 - \e^4 x^2}) \cdot  \frac{(x, \sqrt{\frac{1}{\e^4} - x^2} - \frac{2}{\e^2} - 2b)}{\sqrt{\frac{1}{\e^4} + C_M(x)}}\\
	&= - (\e^2 x, \sqrt{1 - \e^4 x^2}) \cdot \frac{ (\e^2 x, \sqrt{1 - \e^4 x^2} - 2 - 2b\e^2)}{\sqrt{1 + \e^4 C_M(x)}}\\
	&= - \frac{1  - (2 + 2b\e^2) \sqrt{1 - \e^4 x^2} }{\sqrt{1 + \e^4 C_M(x)}} \approx 1\,, 
\end{align*}
when $|x| \leqslant \frac{C}{\e}.$ These approximations yield 
\begin{align*}
    \nu_2 \cdot \nabla \left({ \mathfrak{Re}}\left\{\Psi_m^{(1)}\right\}\right) (x,y)\approx & \frac{K_m^\prime \Bigl( \sqrt{\frac{1}{\e^4} + C_M(x)}\Bigr) }{K_m(\frac{1}{\e^2})} T_m \Biggl(  \frac{\e^2 x}{ \sqrt{\frac{1}{\e^4} + C_M(x)}}\Biggr)\\
    \approx & \frac{K_m^\prime \Bigl( \sqrt{\frac{1}{\e^4} + C_M(x)}\Bigr) }{K_m(\frac{1}{\e^2})} T_m (0)
\end{align*}

We turn to computing $\frac{\partial { \left(\Im\left\{\Psi_m^{(1)}\right\}\right)}}{\partial \nu_2}.$ For this we observe that 
\begin{align*}
\nu_2 \cdot 	{ \left(\Im\left\{\Psi_m^{(1)}\right\}\right)} (x,y) &=  \frac{1}{K_m(\frac{1}{\e^2})}\nu_2 \cdot \nabla \Biggl[ K_m \Bigl( \sqrt{x^2 + \bigl( y - \frac{1}{\e^2} - b \bigr)^2}\Bigr) U_{m-1} \Biggl(\frac{\e^2 x}{\sqrt{x^2 + \Bigl(y - \frac{1}{\e^2} - b \Bigr)^2}}\Biggr)\\
	&\quad \quad \times \frac{y - \frac{1}{\e^2} - b}{\sqrt{x^2 + \Bigl(y - \frac{1}{\e^2} - b \Bigr)^2}} \Biggr]\\
	& \quad \quad \approx -\frac{1}{K_m(\frac{1}{\e^2})} U_{m-1} \Biggl(\frac{\e^2 x}{\sqrt{\frac{1}{\e^4} + C_M(x)}}\Biggr)\frac{\sqrt{\frac{1}{\e^4} - x^2} - \frac{2}{\e^2} - 2b}{\sqrt{\frac{1}{\e^4} + C_M(x)}}\\
	&\quad \quad \times  K_m^\prime  \Bigl( \sqrt{\frac{1}{\e^4} + C_M(x)}\Bigr) \frac{(x, \sqrt{\frac{1}{\e^4} - x^2} - \frac{2}{\e^2} - 2b)}{  \sqrt{\frac{1}{\e^4} + C_M(x)}} \cdot  (\e^2 x, \sqrt{1 - \e^4 x^2})\\
	&\quad \quad = - \frac{1}{K_m ( \frac{1}{\e^2})} U_{m-1} \Biggl( \frac{\e^4 x}{\sqrt{1 + \e^4 C_M(x)}}\Biggr) \frac{\sqrt{1 - \e^4 x^2} - 2 - 2b \e^2}{\sqrt{1 + \e^4 C_M(x)}} \\
	&\quad \quad \quad  \times K_m^\prime  \Bigl( \sqrt{\frac{1}{\e^4} + C_M(x)}\Bigr) \frac{ 1  - (2 + 2b\e^2) \sqrt{1 - \e^4 x^2}}{\sqrt{1 + \e^4 C_M(x)}}\\
	&\quad \quad \approx - U_{m-1} (0) \frac{K_m^\prime \Bigl( \sqrt{\frac{1}{\e^4} + C_M(x)}\Bigr)}{K_m (\frac{1}{\e^2})}\,,
\end{align*}
and 
\begin{equation*}
	{ (\Psi_n^{(2)}) (x,y) =T_n(\e^2 x)+i  \sqrt{1 - \e^4x^2} U_{n-1}(\e^2 x)  \approx T_n(0)+i U_{n-1}(0)\,.} 
\end{equation*}
In the above, $\approx$ means that the left and right-hand sides of the equality differ by $O(\e^2)$ in magnitude, as can be checked from a straightforward calculation (by noting that $\nu_2 \approx (0,-1),$ and so only the derivatives along the ``radial" direction contribute)

\smallskip 
Therefore, we obtain 
\begin{align*}
&{   \int_{\partial B_2^+}} \frac{\partial \Psi_m^{(1)}}{\partial \nu_2} \overline{\Psi_n^{(2)}}\,d\sh^1 \\
&= { \int_0^{\frac{M}{\e}} \Bigl[ \frac{\partial \Psi_m^{(1)}}{\partial \nu_2} \overline{\Psi_n^{(2)}}  \Bigr] \frac{1}{\sqrt{1 - \e^4 x^2}}\,dx} + O\Bigl(\exp \bigl( - \frac{c}{\e^2} \bigr)\Bigr)\\
&\approx {  \Bigl( (T_m(0)-iU_{m-1}(0)) (T_n(0)-iU_{n-1}(0))\Bigr)} \int_0^{\frac{M}{\e}}\frac{K_m^\prime \Bigl( \sqrt{\frac{1}{\e^4} + C_M(x)}\Bigr) }{K_m(\frac{1}{\e^2})} \,dx \\
& \approx  e^{-\frac{i(n+m)\pi}{2}}. \frac{e^{-2b}\sqrt{\pi}}{\e} \,,
\end{align*}
where we argue exactly like in the proof of Lemma~\ref{l.offdiag.terms}. 

	\end{proof}
	With the foregoing lemmas at hand, just like in the constant boundary case, the proof of the main theorem of this section is then immediate.

	\begin{proof}[Proof of Theorem \ref{t.mt}]
		The proof is a combination of the preceding lemmas, and sending $M_0 \to \infty.$ We note that for fixed $M_0 < \infty,$ from \eqref{e.quadraticformdef}, expanding the quadratic form we obtain
		\begin{equation}
			\begin{aligned}
				&2F_\e(U) =  {\mathfrak{Re} }\biggl[ \int_{\partial B_1} \biggl( \sum_{m \in \Z} g_m^{(1)} e^{im\theta_1}\biggr)\cdot \overline{\biggl( \sum_{n \in \Z} \sum_{j=1}^2 g_n^{(j)} \frac{\partial}{\partial \nu_1} (\Psi_n^{(j)} + Z_n^{(j)})\biggr)}\,d \sh^1\biggr] \\ &\quad \quad +  {\mathfrak{Re} }\biggl[\int_{\partial B_2 }  \biggl( \sum_{m \in \Z} g_m^{(2)} e^{im\theta_2}\biggr)\cdot \overline{\biggl( \sum_{n \in \Z} \sum_{j=1}^2 g_n^{(j)} \frac{\partial}{\partial \nu_2} (\Psi_n^{(j)} + Z_n^{(j)})\biggr)}\,d \sh^1 \biggr] \,.
			\end{aligned}
		\end{equation} 
       \begin{comment}
            \begin{align*}
			\G^T \mathcal{A}_{M_0}G
   &{= - \sum_{i=1}^2 \sum_{m,n}  \frac{{ \pi}}{\e^2} \Bigl(\frac{K_n'(\sfrac{1}{\e^2})}{K_n(\sfrac{1}{\e^2})}\Bigr) \delta_{mn} g_m^{(i)}\overline{g_n^{(i)}}}
   \\
   &{+} 2\sum_{m,n} c_{m+n} g_m^{(1)}\overline{g_n^{(2)}} \\
			&{=}\mathfrak{R}\Bigl( 2 g_0^{(1)} \overline{g_0^{(2)}} + g_0^{(1)}\overline{\sum_{m = 0, n \neq 0} c_n g_n^{(2)}} + g_0^{(2)}\overline{\sum_{n=0, m \neq 0} c_m g_m^{(1)}} +  \sum_{m\neq 0, n \neq 0} c_{m+n} g_m^{(1)} \overline{g_n^{(2)}}\Bigr)\,.
		\end{align*} 
       \end{comment} 
       	where, since all the sums are finite (as~$|g_m^{(i)}| = 0$ if~$|m| > M_0$), we can freely rearrange terms in the summation, and carry out various differentiation and integration operations term-by-term. 
        Now we use the definition of Fourier coefficients: 
	\begin{equation*}
		g_m^{(i)} = \fint_0^{2\pi} e^{-im \theta} g_i(\theta)\,d\theta, \quad m \in \Z, i \in \{1,2\}\,. 
	\end{equation*}
Inserting this in the prior expression, and invoking Lemma 3.3 for the diagonal blocks, and Lemma 3.5 for the off-diagonal (i.e. interaction) terms,  we obtain, for the interaction terms 
\begin{align*}
\frac{2\sqrt{\pi}\,e^{-2b}}{\e}  \mathfrak{R}\biggl\{ \sum\limits_{n,m}e^{-\frac{i(n+m)\pi}{2}}g^1_{n}\overline{g^2_{m}}\biggr\} \,.
\end{align*}
 We compute, 
\begin{multline*}
      \sum\limits_{n,m}e^{-\frac{i(n+m)\pi}{2}}g^1_{n}\overline{g^2_{m}}=\left(\sum\limits_{n}e^{-\frac{in\pi}{2}}g^1_{n}\right)\overline{\left(\sum\limits_{m}e^{\frac{im\pi}{2}}g^2_{m}\right)}\\
      =g_1\left(\left(\left(0, \frac{1}{\e^2} + b\right) + \frac{1}{\e^2}\left(\cos \left(-\frac{\pi}{2}\right), \sin \left(-\frac{\pi}{2}\right)\right)\right)\right)\\\times{g_2\left(\left(\left(0, -\frac{1}{\e^2} - b\right) + \frac{1}{\e^2}\left(\cos \left(\frac{\pi}{2}\right), \sin \left(\frac{\pi}{2}\right)\right)\right)\right)}={  g_1(p)g_2(q)}\,,
\end{multline*}
 as the point $\theta = \frac{3\pi}2\sim-\frac{\pi}{2}$ corresponds to the bottom tip of the upper circle $\partial B_1$, and $\phi = -\frac{3\pi}2\sim\frac{\pi}{2}$ corresponds to the upper tip of the lower circle $\partial B_2$, and we recall that each $g_i$ is real-valued. The proof of the theorem is completed when boundary conditions $g_1 $ and $g_2$ have no more than the first $M_0$ modes in Fourier space. Sending $M_0 \to \infty$ completes the proof. 
	\end{proof}

\section{Interaction energies of multiple particles}

\label{sec:multiple}
In this subsection we demonstrate how the analysis of the present paper can be extended to multiple particles. We will also indicate how to modify the arguments to permit polydisperse collections of particles. As a first step toward these generalizations, we consider unit balls $\{B_i\}_{i=1}^N,$ with disjoint closures 
\begin{equation*}
    \overline{B}_i \cap \overline{B}_j = \emptyset\,.
\end{equation*}
Denoting the center of the disk $B_i$ via $a_i^\e,$ for any $i,j \in \{1,\cdots, N\}$ we define $b_{ij} >0$  via
\begin{equation*}
    b_{ij} := \frac{|a_i^\e - a_j^\e| - 2}{\e^2}\,.
\end{equation*}
We consider obtaining an energy expansion to the solution $u_\e \in H^1\bigl(\R^2 \setminus \bigcup_{i=1}^N \overline{B}_i\bigr)$ of the problem
\begin{equation} \label{e.manyparticles}
\begin{aligned}
     \Delta u_\e&=\frac{1}{\e^4} u_\e \quad \mbox{ in } \R^2 \setminus \bigcup_{i=1}^N \overline{B}_i\\
     u_\e&= g_i \quad \mbox{ on } \partial B_i\,.
\end{aligned}
\end{equation}
For simplicity, we focus on the case where the boundary conditions $g_\e^i$ are all constant; the generalization of the discussion here to nonconstant $g_\e^i$ can then be easily carried out. 

Following~\eqref{eq:sipa} we let 
\begin{equation*}
    \Psi_i(x) := \frac{K_0\bigl( \frac{|x-a_i^\e|}{\e^2} \bigr)}{K_0(\e^{-2})}\,, \quad i = 1, \cdots, N; x \in \R^2 \setminus \bigcup_{i=1}^N \overline{B}_i\,.
\end{equation*}
Here $a_i^\e$ is the center of the disk $B_i.$ We also introduce $R_i \in H^1\bigl(\R^2 \setminus \bigcup_{i=1}^N \overline{B}_i\bigr)$ denote the unique solution to 
\begin{equation*}
    \begin{aligned}
        \Delta R_i &= \frac1{\e^4} R_i \quad \mbox{ in } \R^2 \setminus \bigcup_{i=1}^N \overline{B}_i\\
        R_i &= 1 \qquad  \mbox{ on } \partial B_i\\
        R_i &= 0 \qquad \mbox{ on } \partial B_j, j \neq i\,.
    \end{aligned}
\end{equation*}
Finally, we define 
\begin{equation*}
    Z_i = R_i - \Psi_i\,.
\end{equation*}
Then, it is clear that the unique solution to~\eqref{e.manyparticles} is given by 
\begin{equation*}
    u_\e = \sum_{i=1}^N g_i R_i = \sum_{i=1}^N g_i (\Psi_i + Z_i)\,.
\end{equation*}
Then, by analogy with~$\eqref{e.ZjPDE},$ the function $Z_i$ introduced here is a solution to the linear PDE of interest, which vanishes on the $i$th disk, and is equal to the negative of the single particle solution $\Psi_i$ on all other balls. The analogy of~\eqref{e.sol} and~\eqref{e.decompofenergy}  is then apparent, and we find that 
\begin{equation*}
2F_\e (u_\e) = 
    \begin{pmatrix}
        g_1 & \cdots & g_N
    \end{pmatrix}\begin{pmatrix}
        \|\Psi_1 + Z_1\|_\e^2 & \cdots & \langle \Psi_1 + Z_1, \Psi_n + Z_n\rangle_\e \\
        \vdots & \ddots & \vdots \\
        \langle \Psi_n + Z_n , \Psi_1 + Z_1 \rangle_\e & \cdots & \|\Psi_n + Z_n\|_\e^2
    \end{pmatrix}\begin{pmatrix}
        g_1\\
        \vdots \\
        g_n
    \end{pmatrix}
\end{equation*}
From this, arguing as in the two particle case, it is clear that the energy of the minimiser is concentrated in the necks to first order in an energy expansion: the leading order is
\begin{equation*}
    \frac{2\pi}{\e^2} \sum_{i=1}^N \int_{\partial B_i}|g_i|^2\,d\sh^1\,,
\end{equation*}
the next order contribution is $O(K_0(2b)/\e)$ arises from nearest neighbors from the neck in between such neighbors. 

The case of polydisperse particles is also similar to handle: namely, if the particle radii vary between $[\rho_{min}, \rho_{max}]$ for some $\rho_{min}, \rho_{max} = O(1)$ in $\e,$ then one simply defines $\Psi_i$ and $R_i$ as above, accordingly. 


	\section{Numerics and comparison to nonlinear models }
 \label{sec:numerics}

In this section we use numerical simulations in COMSOL, \cite{comsol}, to verify the asymptotics established in the previous sections. We begin by considering the asymptotic expansion \eqref{eq:expans}. Recalling that the self-energy of a single particle is given by
\[\frac{2\pi}{\e^2}\frac{K_1\left(\frac{1}{\e^2}\right)}{K_0\left(\frac{1}{\e^2}\right)}=\frac{2\pi}{\e^2}\left(1+\frac12\e^2+o\left(\e^2\right)\right)=\frac{2\pi}{\e^2}+\pi+o(1),\]
we will replace the $O(1)$-term in \eqref{eq:expans} by $\pi$ and let 
\begin{multline}
\label{eq:expans1}
	\bar\kappa_\e=\frac{2\pi}{\e^2}\left(g_1^2+g_2^2\right)+\frac{\sqrt{\pi/2}}{\e}\left[\Li\left(e^{-4b}\right)+\Theta_4\left(e^{-2b}\right)+e^{-4b}\right](g_1^2+g_2^2)\\-\frac{4\sqrt{\pi}}{\e}\left[e^{-2b}+\frac{1}{\sqrt{2}}\Theta_3\left(e^{-2b}\right)\right]g_1g_2+\pi\,.
\end{multline}
Fig.~\ref{fig:res1} shows the dependence of $\kappa_\e$ and $\bar\kappa_\e$ on $b$ for $\e=0.15$ when $g_1\equiv1$ on $\partial B_1$ and $g_2\equiv0$ on $\partial B_2.$
\begin{figure}
    \centering
    \includegraphics[width=3.5in]{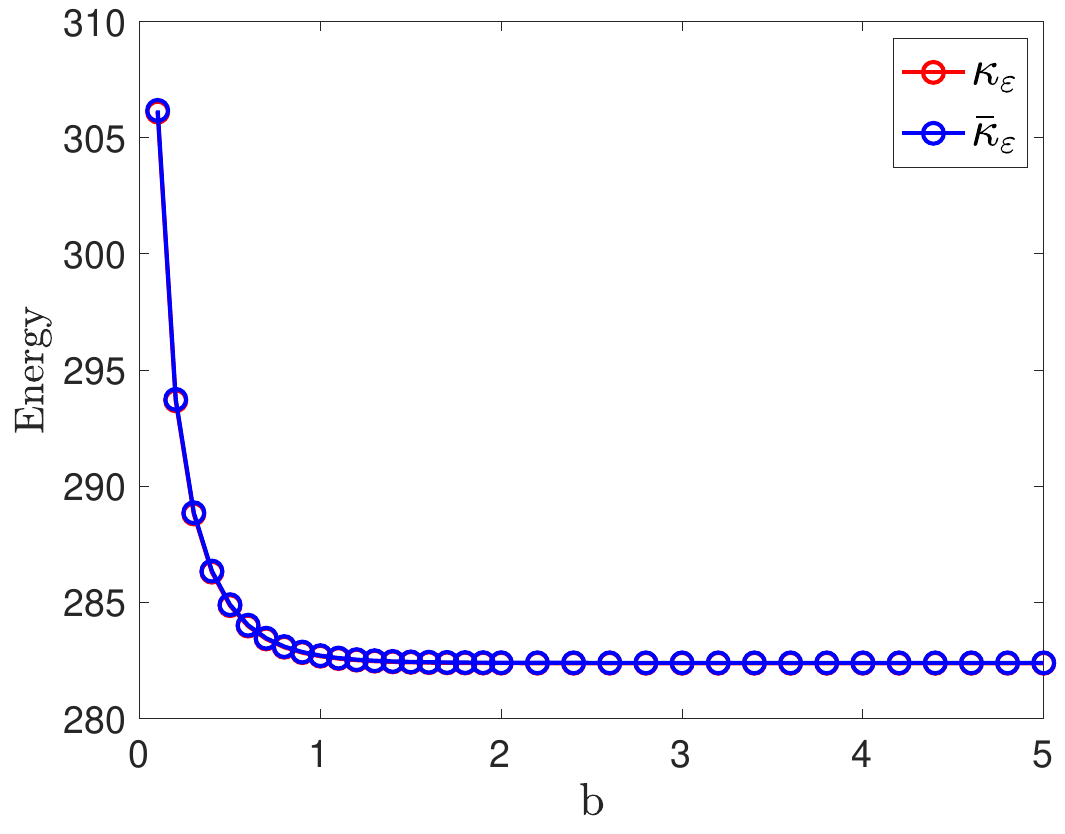}
    \caption{Comparison between $\kappa_\e$ and $\bar\kappa_\e$ when $g_1\equiv1$ and $g_2\equiv0$.}
    \label{fig:res1}
\end{figure}

Fig.~\ref{fig:res2} shows the dependence of $\kappa_\e$ and $\bar\kappa_\e$ on $b$ for $\e=0.15$ when $g_1\equiv1$ on $\partial B_1$ and $g_2\equiv1$ on $\partial B_2.$
\begin{figure}
    \centering
    \includegraphics[width=3.5in]{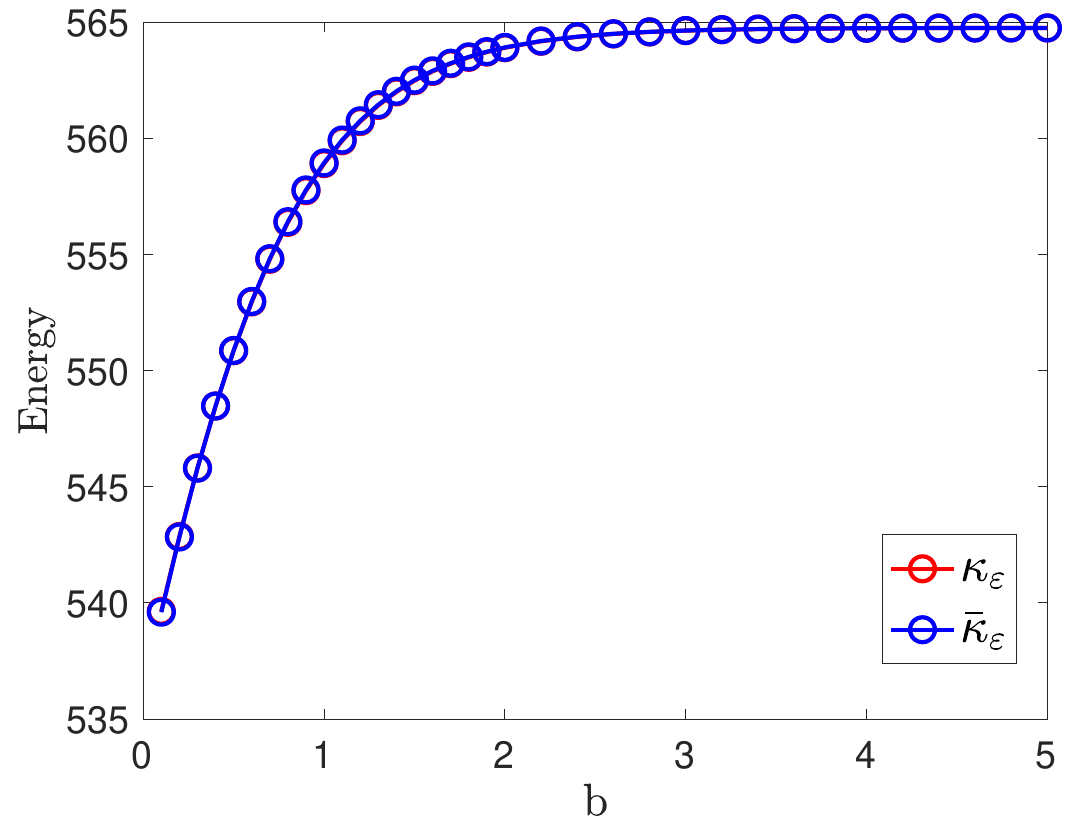}
    \caption{Comparison between $\kappa_\e$ and $\bar\kappa_\e$ when $g_1\equiv1$ and $g_2\equiv1$.}
    \label{fig:res2}
\end{figure}

Fig.~\ref{fig:res3} shows the dependence of $\kappa_\e$ and $\bar\kappa_\e$ on $b$ for $\e=0.15$ when $g_1\equiv1$ on $\partial B_1$ and $g_2\equiv-1$ on $\partial B_2.$
\begin{figure}
    \centering
    \includegraphics[width=3.5in]{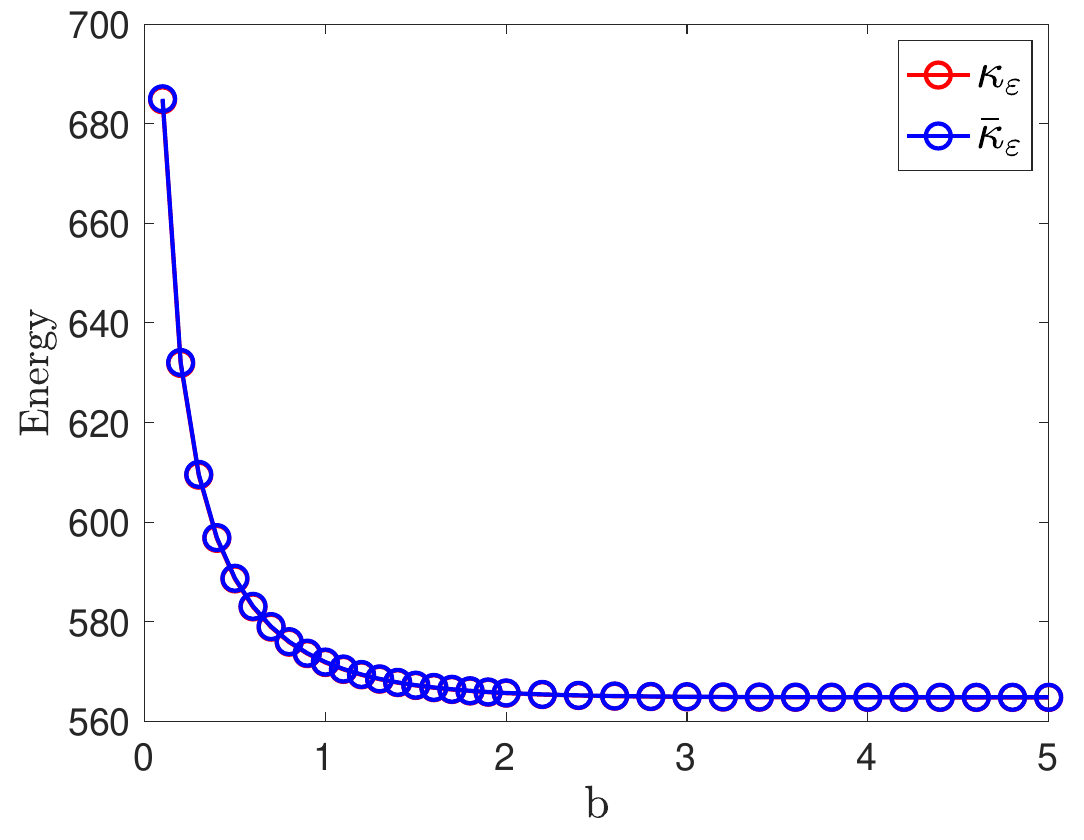}
    \caption{Comparison between $\kappa_\e$ and $\bar\kappa_\e$ when $g_1\equiv1$ and $g_2\equiv-1$.}
    \label{fig:res3}
\end{figure}
From  Figs.~\ref{fig:res1}-\ref{fig:res3} we conclude that our asymptotics are, in fact, accurate up to $o(1).$ We also observe that for certain combinations of $g_1$ and $g_2$ the form of the $(b,\bar\kappa_\e)$-dependence can be of the Lennard-Jones-type as shown in Fig.~\ref{fig:res12}.
\begin{figure}
    \centering
    \includegraphics[width=3.5in]{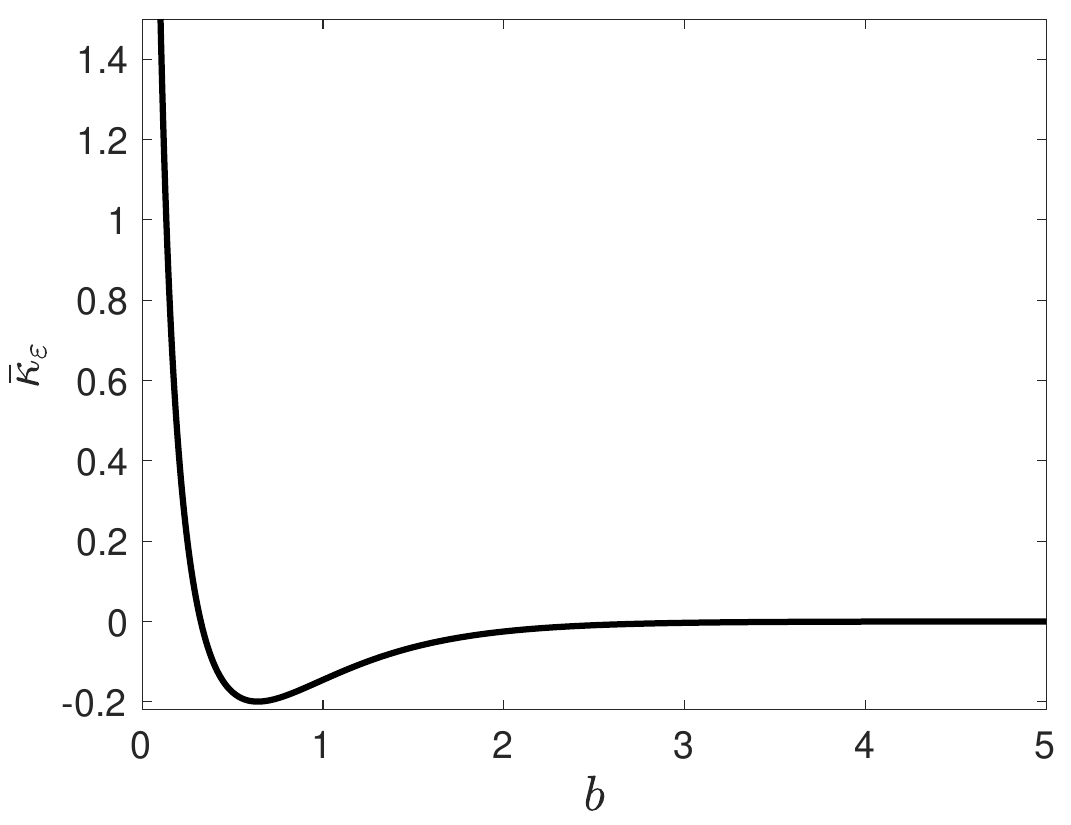}
    \caption{$\bar\kappa_\e(b)$ when $g_1\equiv1$ and $g_2\equiv0.2$.}
    \label{fig:res12}
\end{figure}

The comparison between the energies of minimizers of the full nonlinear and linear problems are shown in Fig.~\ref{fig:res11} for $\e=0.15$ and two different choices of boundary data when $k(T)=2.$ 
\begin{figure}
    \centering
    \includegraphics[width=2.9in]{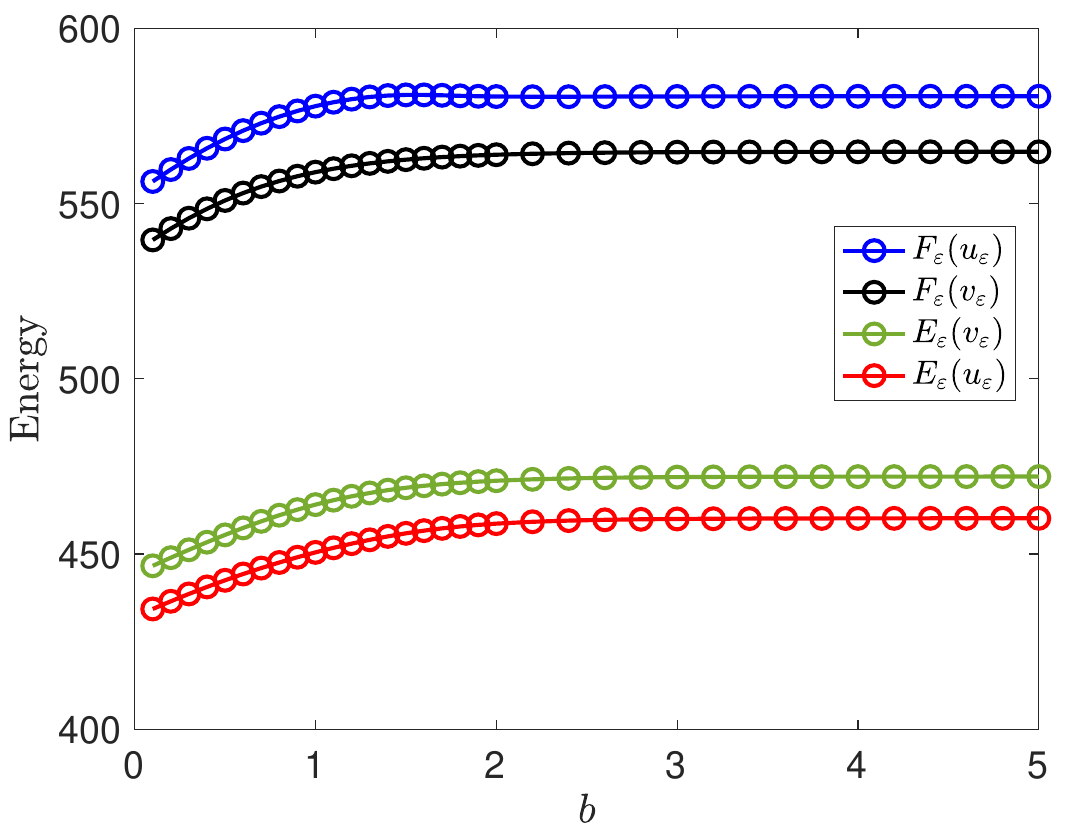}\quad \includegraphics[width=2.9in]{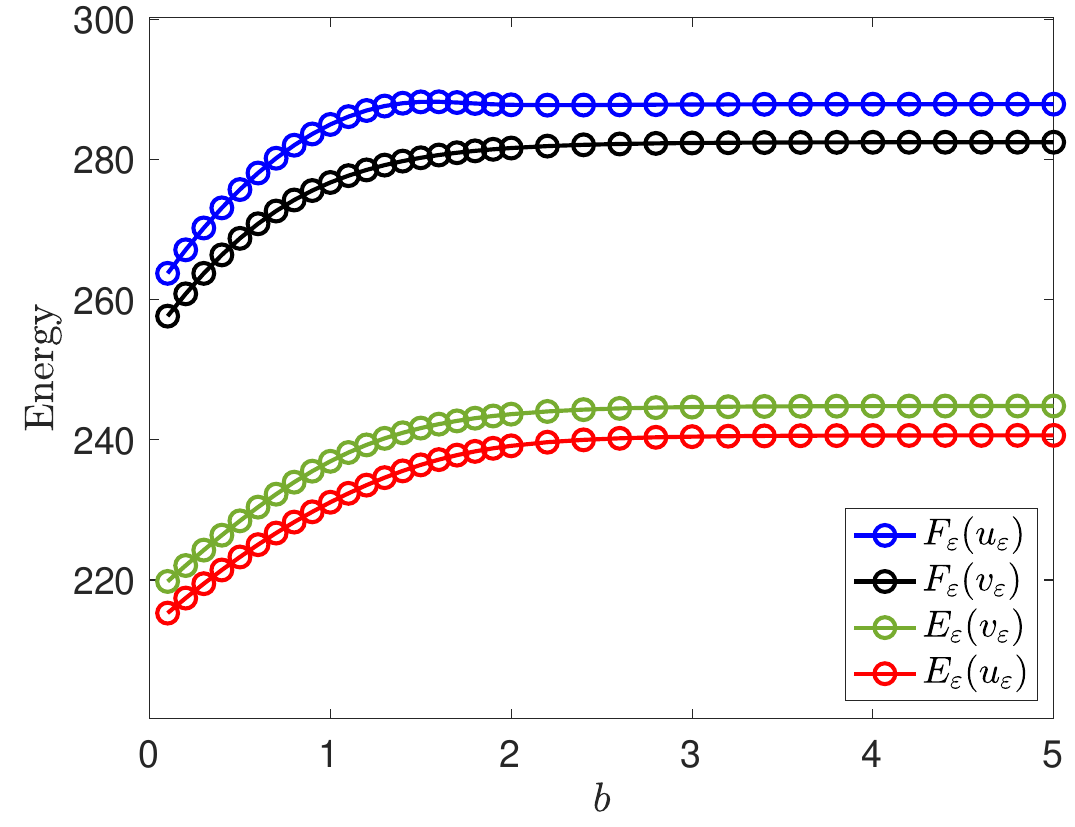}
    \caption{Comparison between the energies of minimizers for the nonlinear and linear problems when $g_1\equiv1$ and $g_2\equiv1$ (left) and $g_1\equiv\cos{\theta}$ and $g_2\equiv-\cos{\theta}$ (right). Here $u_\e=\argmin{E_\e(u)}$ and $v_\e=\argmin{F_\e(u)}.$}
    \label{fig:res11}
\end{figure}
Note that the difference between the minimum $F_\e(v_\e)$ of the quadratic energy and $E_\e(u_\e)$ of its nonlinear counterpart is roughly a constant, hence the interaction forces between the two particles in the nonlinear and linear regimes are approximately the same.
\begin{comment}
The difference between the $L^2$-norms of these minimizers and their gradients are depicted in Fig.~\ref{fig:res11}. The data shows that ${\|u_\e-v_\e\|}_{L^2}=\sout{\mathcal{O}}\red{O}(\e)$ and ${\|\nabla u_\e-\nabla v_\e\|}_{L^2}=\sout{\mathcal{O}}\red{O}\left(\e^{-1}\right).$ Here $u_\e=\argmin{E_\e(u)}$ and $v_\e=\argmin{F_\e(u)}.$
\begin{figure}
    \centering
    \includegraphics[width=2.9in]{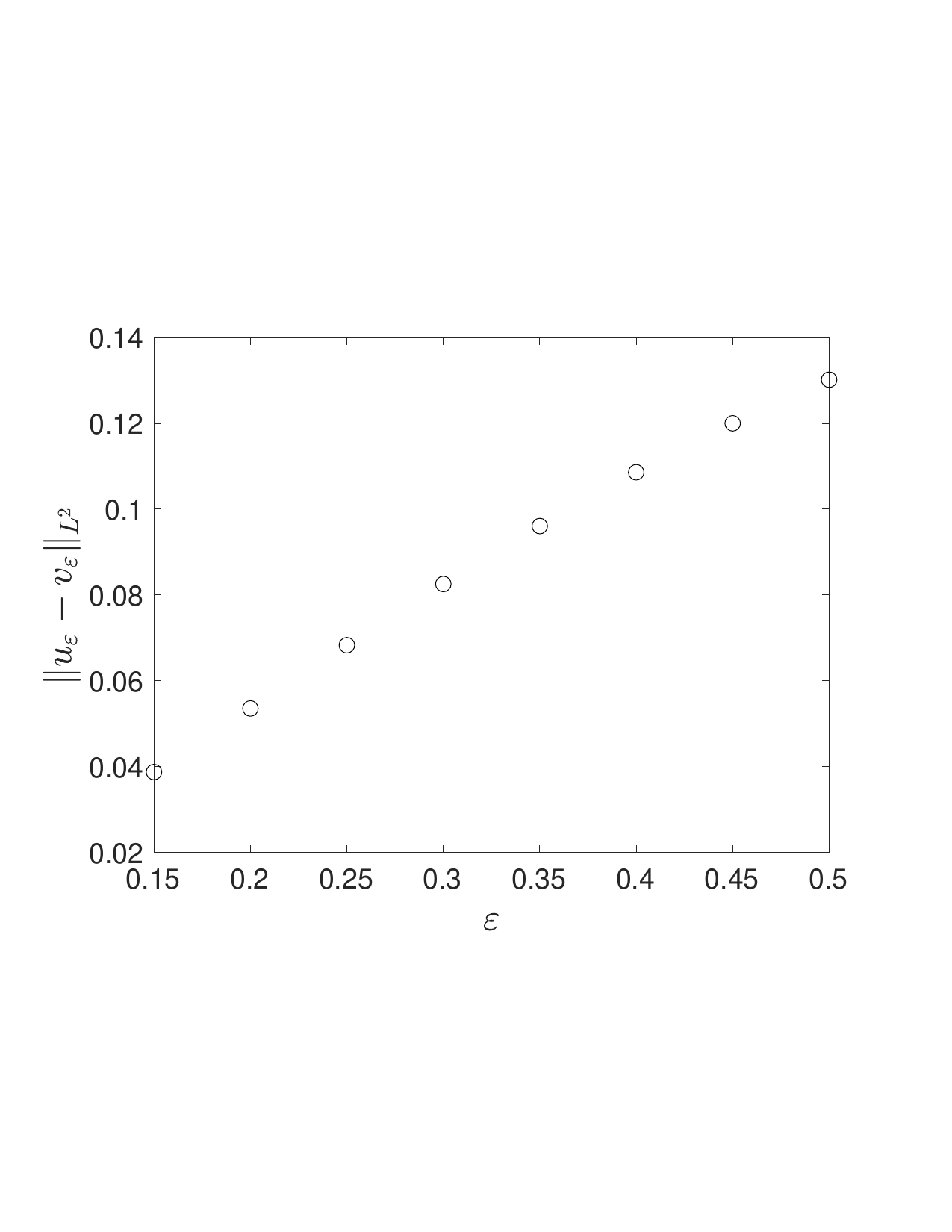}\quad \includegraphics[width=2.8in]{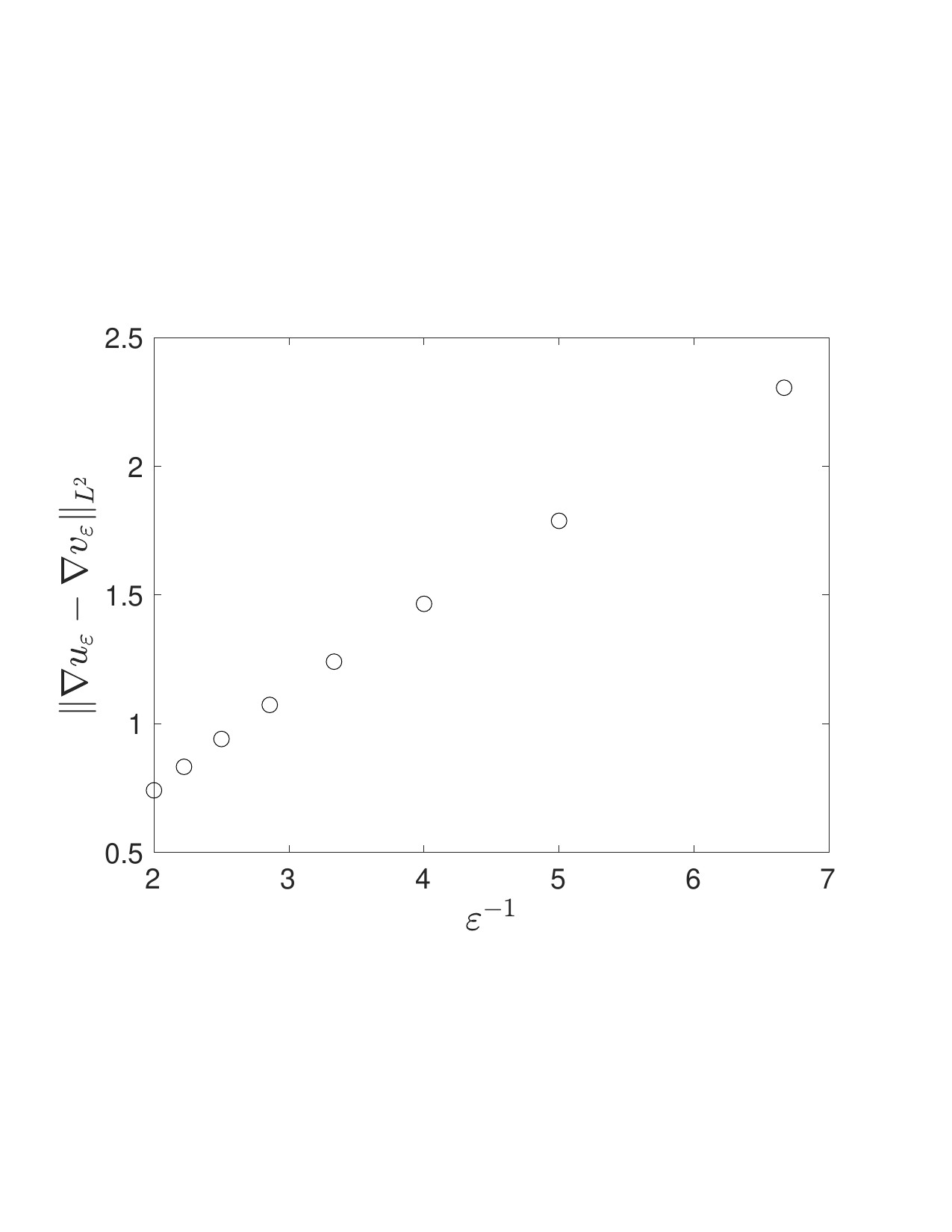}
    \caption{Comparison between the minimizers for the nonlinear and linear problems when $g_1\equiv\cos{\theta}$ and $g_2\equiv-\cos{\theta}$ and $b=1$. Here $u_\e=\argmin{E_\e(u)}$ and $v_\e=\argmin{F_\e(u)}.$}
    \label{fig:res4}
\end{figure}
\end{comment}

Next, we consider a system of three particles with $g_1=g_2=g_3\equiv1$ on $\partial B_1,$ $\partial B_2$ and $\partial B_3,$ respectively. To this end, we denote by 
\[\kappa_\e^0:=\frac{3\pi}{2}\left(\frac{4}{\e^2}+1\right)\]
the self-energy of three particles and 
\[\kappa_\e^1(b):=\frac{\sqrt{2\pi}}{\e}\left[\Li\left(e^{-4b}\right)+\Theta_4\left(e^{-2b}\right)+e^{-4}\right]\\-\frac{4\sqrt{\pi}}{\e}\left[e^{-2b}+\frac{1}{\sqrt{2}}\Theta_3\left(e^{-2b}\right)\right]\]
the interaction energy of a single neck between the two particles on the distance $2\e^2b$ from each other (cf. \eqref{eq:expans1}).

In the first numerical experiment, we assume that three particles are positioned at the vertices of a equilateral triangle, where the distance between the centers of any pair of particles is $2+2b\e^2.$ Assuming that the interactions are restricted to the necks, the minimum energy of this configuration should be
\[\kappa_\e\sim\bar\kappa_\e:=\kappa_\e^0+3\kappa_\e^1.\]
Fig.~\ref{fig:res5} demonstrates that this is indeed the case as the graphs of $\kappa_\e$ and $\bar\kappa_\e$ as functions of $b$ are essentially indistinguishable.
\begin{figure}
    \centering
    \includegraphics[width=2.9in]{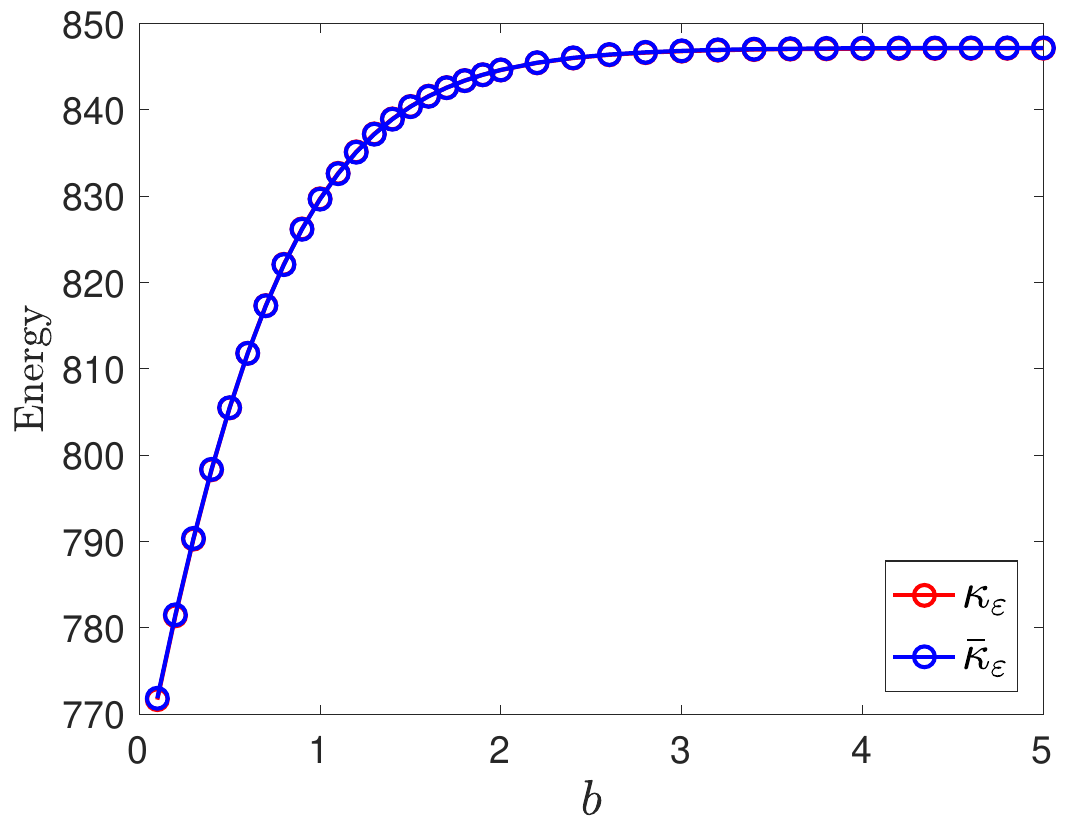}\quad \includegraphics[width=2.9in]{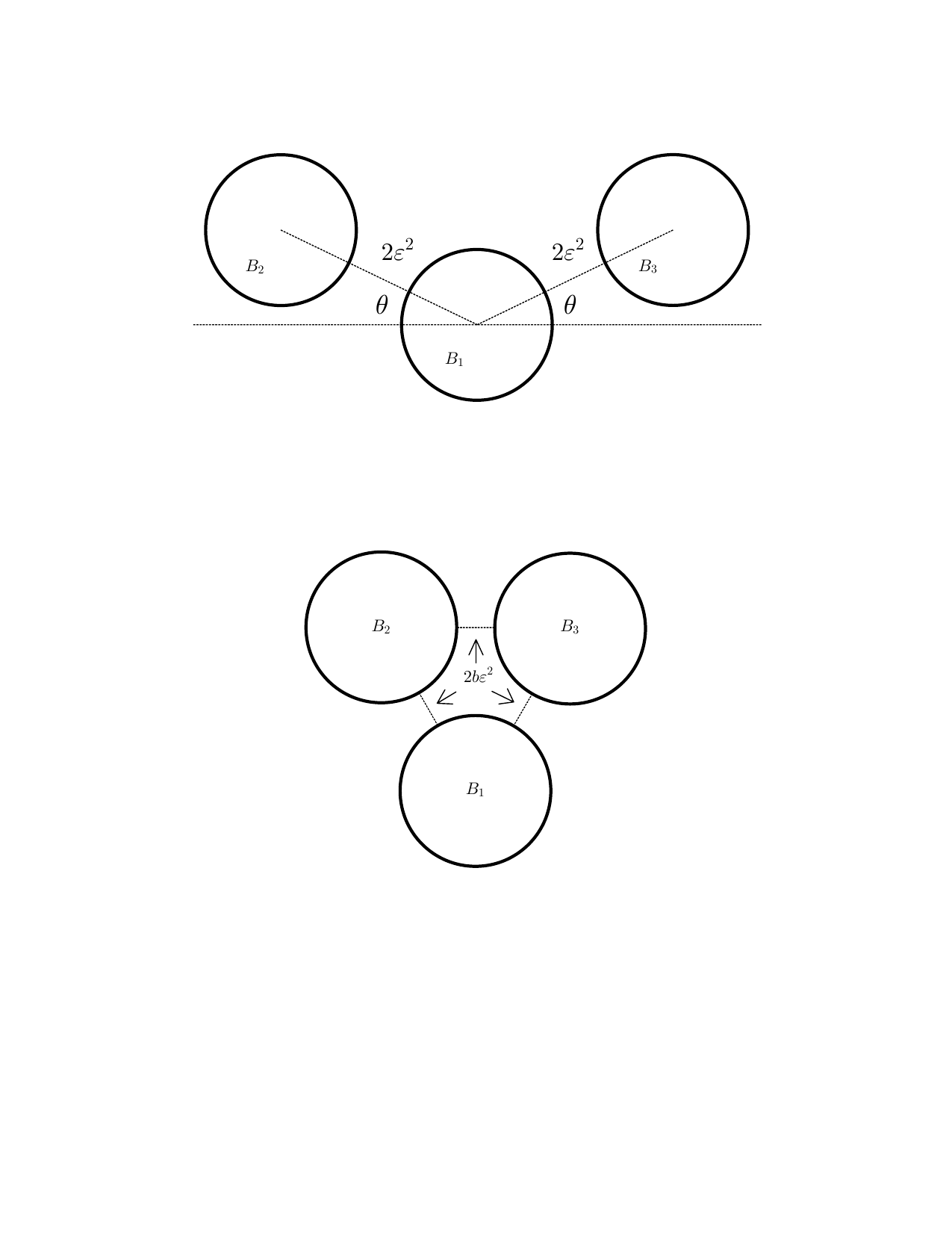}
    \caption{Comparison between $\kappa_\e$ and $\bar\kappa_\e$ (left) for a three-particle triangular configuration depicted on the right.}
    \label{fig:res5}
\end{figure}
In Fig.~\ref{fig:res6},
\begin{figure}
    \centering
    \includegraphics[width=3in]{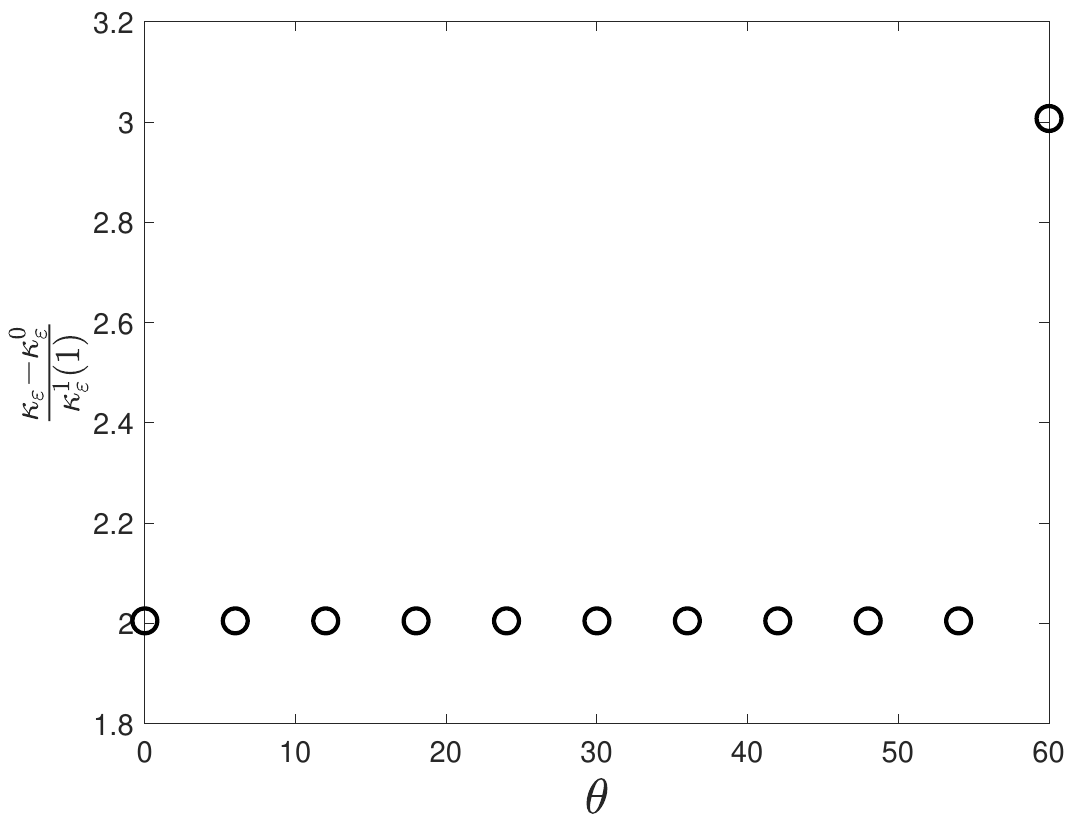}\\ \includegraphics[width=4in]{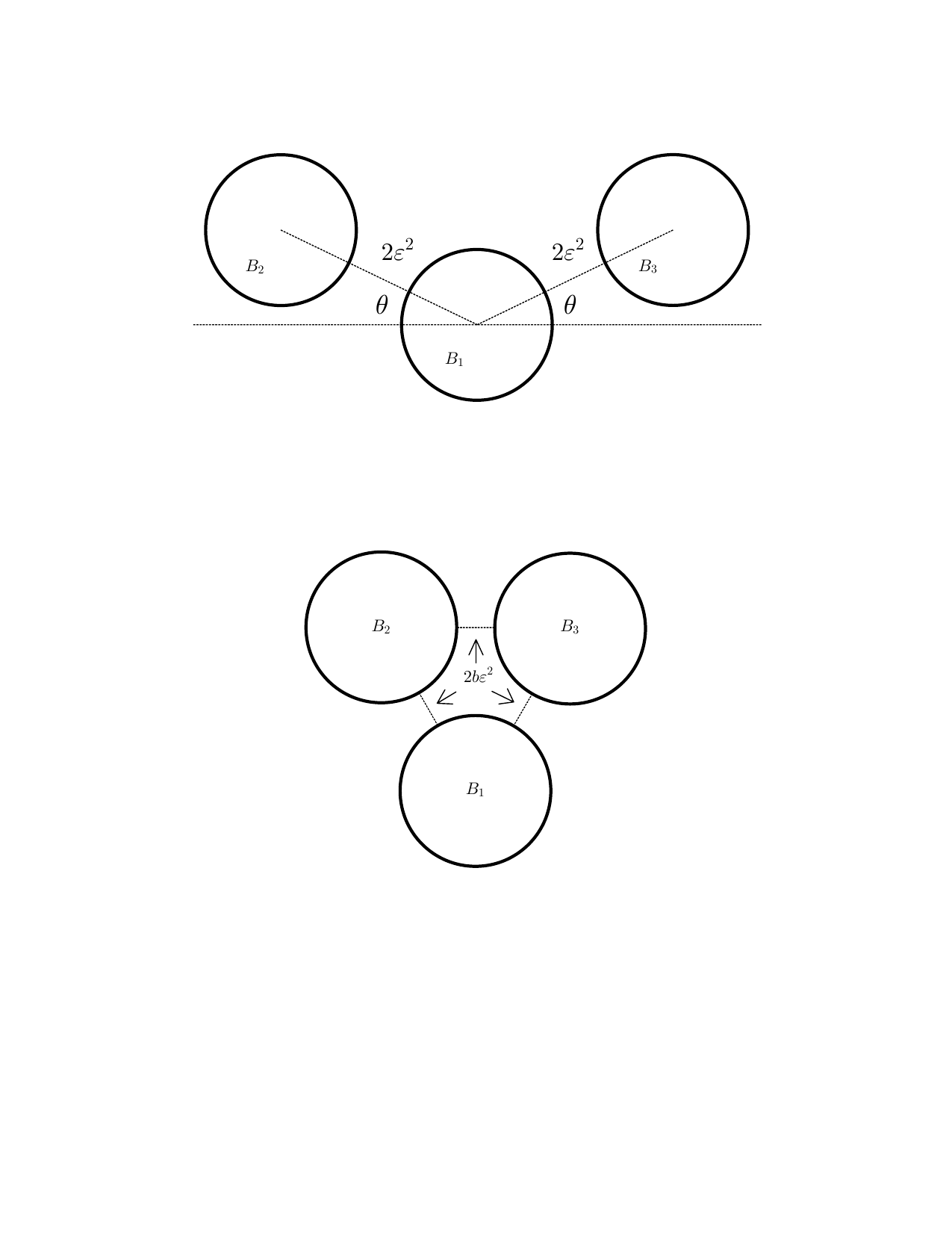}
    \caption{The interaction energy between the particles forming the configuration shown on the bottom as a function of $\theta$ (top).}
    \label{fig:res6}
\end{figure}
we consider a configuration of three particles where the distances between $B_1$ and $B_2$ and $B_1$ and $B_3$ are fixed and equal to $2\e^2,$ while the angle $\theta$ ranges from $0$ to $60$ degrees. We plot the ratio between the interaction energy between the three particles and the energy of a single neck. As it can be seen from Fig.~\ref{fig:res6}, when the angle is less than $60$ degrees, then there are exactly two necks and, indeed, the interaction energy is equal to exactly two neck energies. When the angle is equal to $60$ degrees, then the third neck forms and the interaction energy is equal to the three energies of a single neck. We conjecture that for all boundary conditions on the surfaces of particles, the energy of interaction between the particles is concentrated in the necks.

\section{Monte Carlo simulation of the pairwise energy}
\label{sec:MonteCarlo}

In this section we will consider a many-body system of identical particles satisfying given canonical degree $d$ boundary data $g_k(\theta) = e^{i(d\theta - (d-1)\omega_k)}$ and which we assume are free to move and rotate, so that each particle has degrees of freedom corresponding to its centre of mass and the angle $\omega_k$. Following our discussion in the preceding section, we assume that there are only pairwise interactions in this system. Assuming that the distances between particles are large enough and plugging in special choices of $g_1(\theta) = e^{i(d\theta - (d-1)\omega_1)}$ and $g_2(\theta) = e^{i(d\theta - (d-1)\omega_2)}$ in the statement of Theorem~\ref{t.mt}, we obtain that the interaction energy between a pair of particles of radius $1$ at distance $2b\e^2$  is given by 
\begin{equation}
\label{eq:j1}
    V_{12}= -(-1)^d\frac{2e^{-2b}\sqrt{\pi}}{\e}\cos((d-1)(\omega_1 - \omega_2))\,. 
\end{equation}
We note a qualitative difference in behaviour depending on the parity of $d$. When $d$ is even, the energy is minimised at parallel configurations, with particles at relative angle of $0$. Thus, we expect it to be favourable for particles to be closely packed with similar orientations. If $d$ is odd, however, then the energy is minimised at anti-parallel configurations, where particles are at a relative angle of $\frac{\pi}{d-1}$, modulo $\frac{2\pi}{d-1}$. As the interactions are short-range, we expect only interactions with nearest neighbours to be significant. Heuristically, it seems clear that configurations of square-like lattices with second-nearest-neighbours having the same orientation, whilst nearest neighbours are at a relative angle of $\frac{\pi}{d-1}$, should be relatively stable. 

We will consider the pairwise interaction energy for particles with orientations $\omega_1,\omega_2$ and centres of mass separated by $r$ as given by 
\begin{equation}
V(r,\omega_1,\omega_2)=\left\{\begin{array}{l l}
(-1)^{d+1}\exp\left(-\frac{|r|-2}{\epsilon^2}\right)\cos((d-1)(\omega_1-\omega_2)) & |\hat{r}|>2\\
+\infty & |\hat{r}|\leq 2\end{array}\right. 
\end{equation}
Multiplicative factors that do not affect minimisers of the energy are neglected for simplicity. The infinite energy for $|\hat{r}|\leq 2$ corresponds to the the particles being unable to interpenetrate. Of course, the pairwise interaction in \eqref{eq:j1} corresponds to an asymptotic limit, and thus we are required to introduce an appropriate length-scale for the simulation, corresponding to the choice of $\epsilon$, which we take to be $\sqrt{\frac{2}{5}}$, as we found in preliminary studies that even marginally smaller values of $\epsilon$ lead to interactions too weak to produce any noticeable structure. The total pairwise energy is then given by
\[
\frac{1}{2}\sum\limits_{i\neq j}V(x_i-x_j,\omega_i,\omega_j).
\]

We employ a simulated annealing algorithm, with the transition probabilities taken from the corresponding Gibbs' distribution of the system, that is, those of a Metropolis-Hastings algorithm, using 256 particles.  At each temperature, we perform one Monte Carlo iteration to each particle, randomly permuting the order of the particles at each temperature. We perturb the centre of mass of the particle with index $i$ according to a Gaussian distribution with mean $0$ and standard deviation given by $\min(\max(0.025,\delta_i),0.5)$, where $\delta_i=\min\limits_{j\neq i}(|x_i-x_j|-2)$ is the minimal contact distance to another particle. We perturb the angle according to a normal distribution with mean 0 and standard deviation $\frac{2\pi}{50}$. { We} linearly decrease the temperature from $\frac{1}{4}$ to $0$ over 25000 steps{, yielding a total of $25000\times 256$ individual Monte Carlo iterations. The particles are} initialised as a perturbation of a square lattice with nearest-neighbour separation of $2.2$, and orientations taken according to a uniform distribution. Finally, due to the short-range nature of the interactions, at higher temperatures it is easy for particles to drift large distances, at which point their behaviour becomes a random walk and ceases to effectively interact with the rest of the system, so we impose that particles cannot leave a box of size $46\times 46$. { This is effectively imposed as an infinite confining potential $U$, taken as a function of the centre of mass, so that for the domain $\Omega=(0,46)^2$, $U(x)=\infty$ if $x\not\in \Omega$ and $U(x)=0$ otherwise. As the particles have effective radius of 1, the effective area density of the system is approximately $0.38$}

Whilst simulated annealing is generally used to find global minimisers, as the particles are very weakly interacting, we expect a relatively flat energy landscape that permits large fluctuations away from the global minimiser.

In Figure \ref{fig.MHMC} we present the results of the simulations. The disks represent the individual particles, and the lines within them represent their orientation, and are illustrated such as to be consistent with the axes of symmetry of the particles. Furthermore, we colour the particles according to their angle modulo $|\frac{2\pi}{d-1}|$, on an RGB colour-scale, corresponding to the symmetry of their boundary condition. 

In Subfigure \ref{subfig.MHMC.3} we have odd-degree boundary conditions, and thus by the interaction energy \eqref{eq:j1}, we expect to have an anti-parallel configuration, where neighbours in close contact are rotated, but second-nearest neighbours have the same orientation, and this is observed. Even though a square lattice can be expected via a heuristic argument, we observe that the distribution of centres of mass is relatively amorphous, with some mild amount of short-range correlation. We observe several chain-like structures, with anti-parallel alignment with nearest neighbours. Due to the short-range nature of the interaction, these are expected to be locally stable, whilst denser configurations in a square-like lattice would have lower energy. This is similar to the situation in Subfigure \ref{subfig.MHMC.5}, which also exhibits clearly visible well-aligned domains with anti-parallel configurations within. Notably, we observe that the degree 5 configuration is more amorphous with more chain-like structures, which we aim to explain via a heuristic argument. In the degree 5 case, we need to consider relative angles modulo $\frac{\pi}{2}$, whilst in the degree 3 case we consider relative angles modulo $\pi$. This smaller range of angles would suggest a higher sensitivity to small perturbations in the orientation, making it more difficult for the particles to align into their optimum states and leading to amorphousness. 

In Subfigure \ref{subfig.MHMC.2}, our pairwise energy favours nearest neighbours having the same orientation, and many contacts with neighbours, which is the observed behaviour. Although particles are generally well-aligned with their neighbours, we observe a kind of polycrystalline structure with clearly identifiable domains. These are expected to be locally stable, as reorienting a single grain would require simultaneous reorientation of many particles. 

To demonstrate more clearly the local parallel and anti-parallel configurations, we include histograms in Figure \ref{fig.MHMC_angles} of the relative orientations of particles with their nearest neighbours and second-nearest neighbours below, taken modulo $\frac{2\pi}{|d-1|}$. Explicitly, we say that two particles are nearest neighbours if the separation of their centres of mass is less than $2.05$, and that two particles are second-nearest neighbours if they are distinct and share a nearest neighbour. We observe a clear tendency for nearest neighbours to be either parallel or anti-parallel according to the parity of the degree, and parallel alignment of next-nearest neighbours in all cases.

\begin{figure}[ht]\begin{center}
\begin{subfigure}[h]{0.3\textwidth}
\begin{center}
\includegraphics[width=0.99\textwidth]{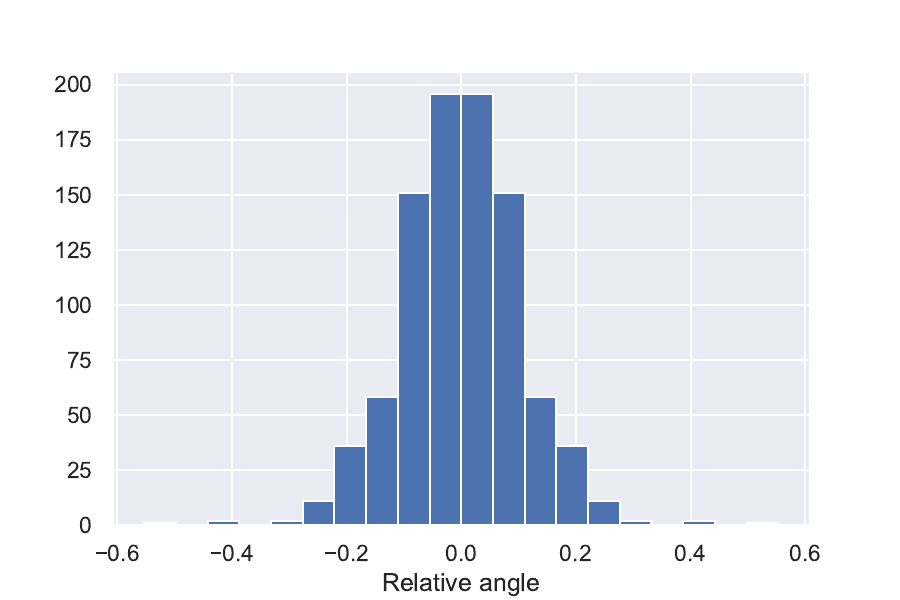}
\caption{Degree 2: Relative angle of nearest neighbours}
\end{center}
\end{subfigure}\hspace{0.1cm}
\begin{subfigure}[h]{0.3\textwidth}
\begin{center}
\includegraphics[width=0.99\textwidth]{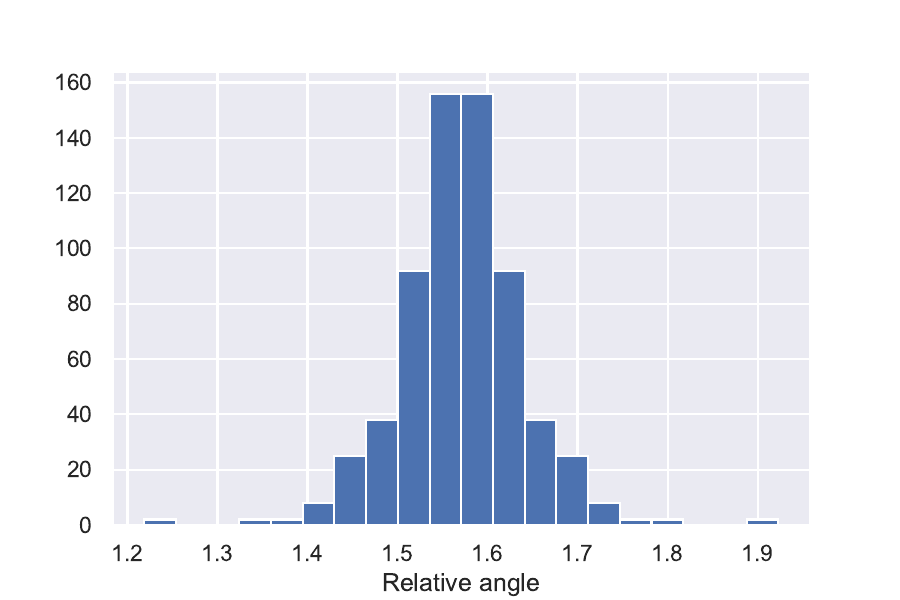}
\caption{Degree 3: Relative angle of nearest neighbours}
\end{center}
\end{subfigure}\hspace{0.1cm}
\begin{subfigure}[h]{0.3\textwidth}
\begin{center}
\includegraphics[width=0.99\textwidth]{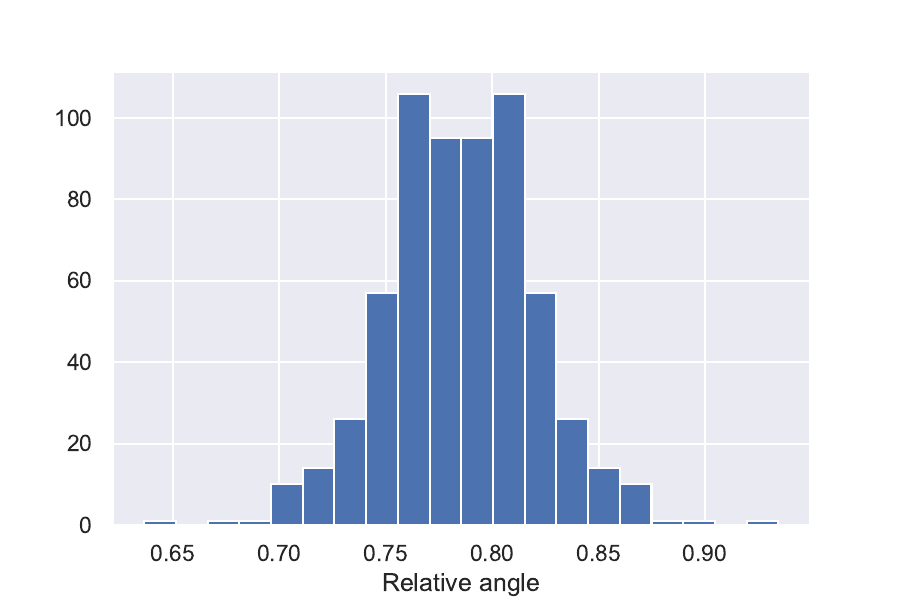}
\caption{Degree 5: Relative angle of nearest neighbours}
\end{center}
\end{subfigure}
\begin{subfigure}[h]{0.3\textwidth}
\begin{center}
\includegraphics[width=0.99\textwidth]{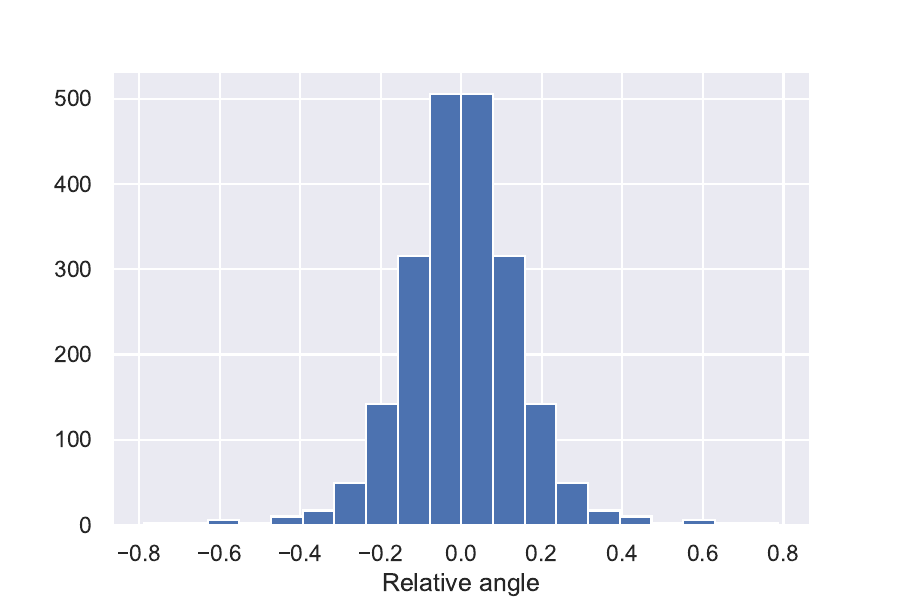}
\caption{Degree 2: Relative angle of next-nearest neighbours}
\end{center}
\end{subfigure}\hspace{0.1cm}
\begin{subfigure}[h]{0.3\textwidth}
\begin{center}
\includegraphics[width=0.99\textwidth]{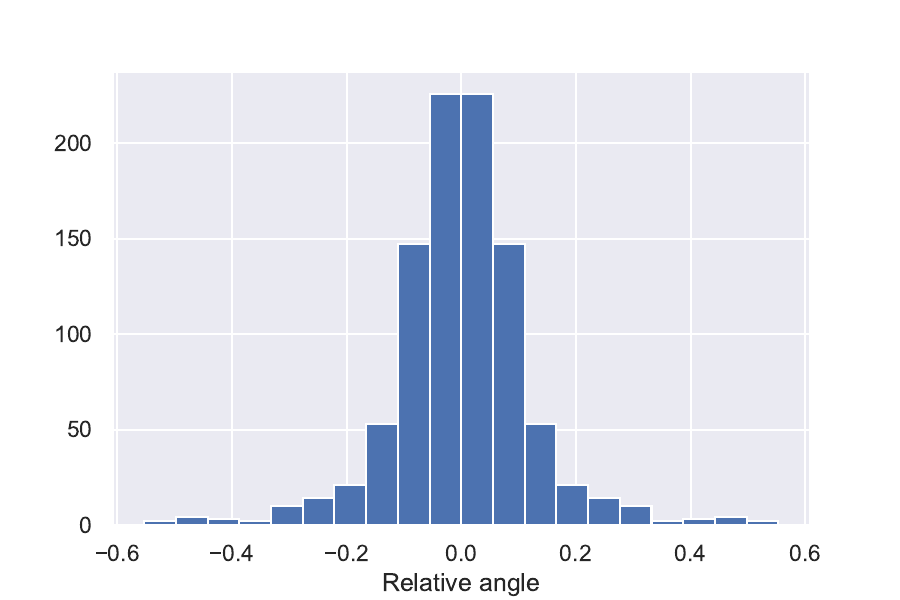}
\caption{Degree 3: Relative angle of next-nearest neighbours}
\end{center}
\end{subfigure}\hspace{0.1cm}
\begin{subfigure}[h]{0.3\textwidth}
\begin{center}
\includegraphics[width=0.99\textwidth]{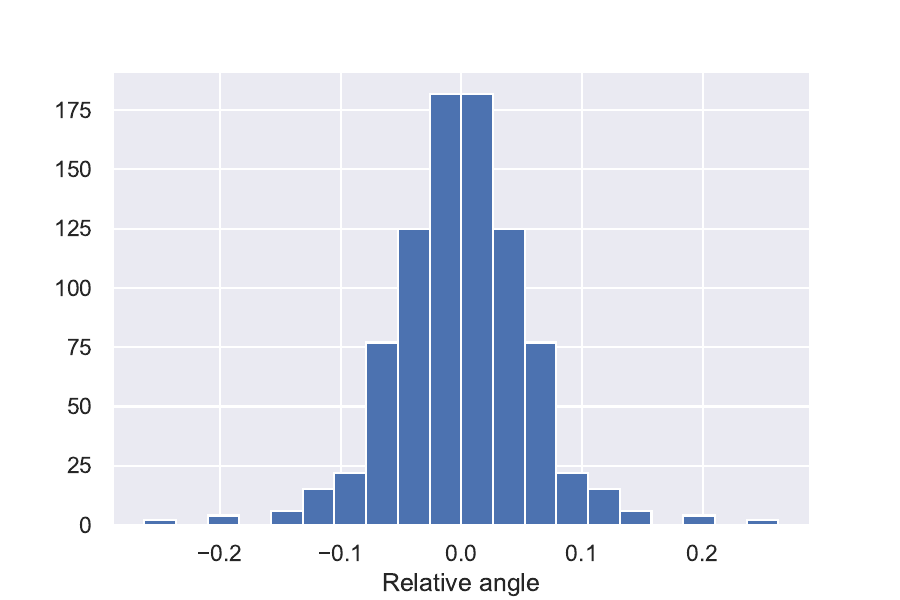}
\caption{Degree 5: Relative angle of next-nearest neighbours}
\end{center}
\end{subfigure}
\caption{Distributions of relative angles of nearest- and next-nearest neigbours}\label{fig.MHMC_angles}
\end{center}
\end{figure}

It is a similarly straightforward exercise to evaluate the angular component of the interaction energy between particles of distinct degrees. If we consider two particles, whose centres of mass are at relative angle $\alpha$, of degrees $d_1,d_2$ and with orientations described by angles $\omega_1,\omega_2$, then we have that 
\begin{equation}
\begin{split}
\Re(g_1(p)\bar{g_2}(q)) =& \Re(e^{i(d_1\alpha-(d_1-1)\omega_1)}e^{-i(d_2(\alpha-\pi)-(d_2-1)\omega_2)})\\
=&\cos((d_1\alpha-(d_1-1)\omega_1)-(d_2(\alpha-\pi)-(d_2-1)\omega_2))\\
=&(-1)^{d_2}\cos((d_1-d_2)\alpha+(d_2-1)\omega_2-(d_1-1)\omega_1),
\end{split}
\end{equation}
where $p$ and $q$ correspond to the closest points on the surface of each respective particle to the other. We remark that unlike the case where both degrees are equal, this depends on the relative position of the particles via $\alpha$, and not just the orientations $\omega_1,\omega_2$. 

We consider a mixed system of degree 1 and degree 3 particles. As seen before, we have that degree 3 particles prefer an anti-parallel alignment. Degree 1 particles are purely repulsive, and due to their rotational symmetry, there is no orientational dependence. For the interactions between degree 1 and degree 3 particles, taking particle 1 to be of degree 3 and particle 2 to be of degree 1, the angular component of the interaction energy is $-\cos(2(\alpha-\omega))$. In particular, their optimal configuration is to have the degree 1 particle at either of the two poles of the degree 3 particle where the director is perpendicular to the surface. We employ a simulated annealing algorithm with the same experimental setup as the previous experiments to obtain the results in Figure \ref{fig.MHMC_mix}. As before, we colour the degree 3 particles according to their angle, modulo $\pi$, with the illustrated diameter spanning the two points where the boundary data is perpendicular to the surface. The degree 1 particles are rotationally symmetric and thus coloured in white.

\begin{figure}[ht]\begin{center}
\begin{subfigure}[h]{0.3\textwidth}
\begin{center}
\includegraphics[width=0.99\textwidth]{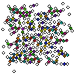}
\caption{Configuration at the end of the simulation}\label{fig.MCMH_sol}
\end{center}
\end{subfigure}\hspace{0.1cm}
\begin{subfigure}[h]{0.3\textwidth}
\begin{center}
\includegraphics[width=0.99\textwidth]{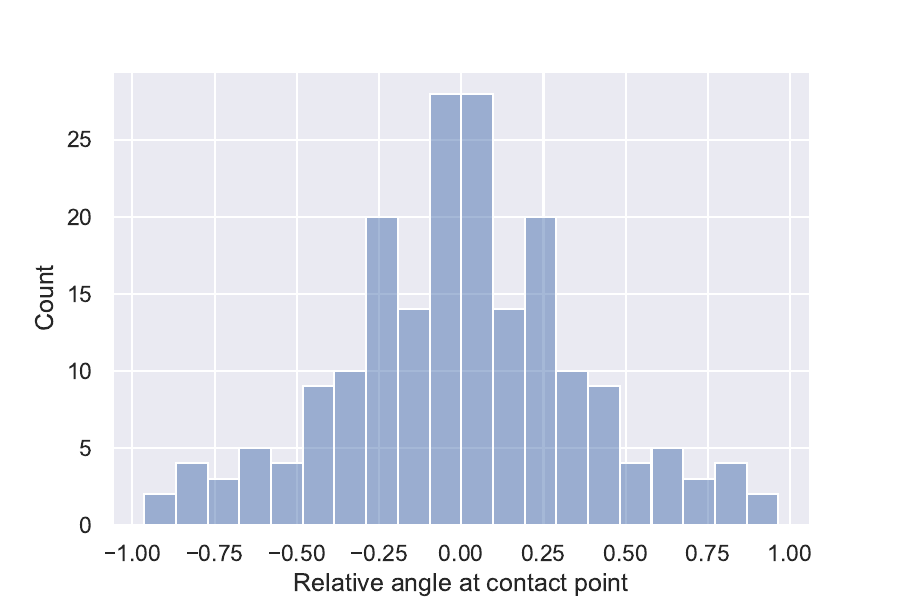}
\caption{Relative angle of the boundary data at the contact point for pairs of degree 1 and degree 3 particles.}\label{fig.MHMC_13}
\end{center}
\end{subfigure}\hspace{0.1cm}
\begin{subfigure}[h]{0.3\textwidth}
\begin{center}
\includegraphics[width=0.99\textwidth]{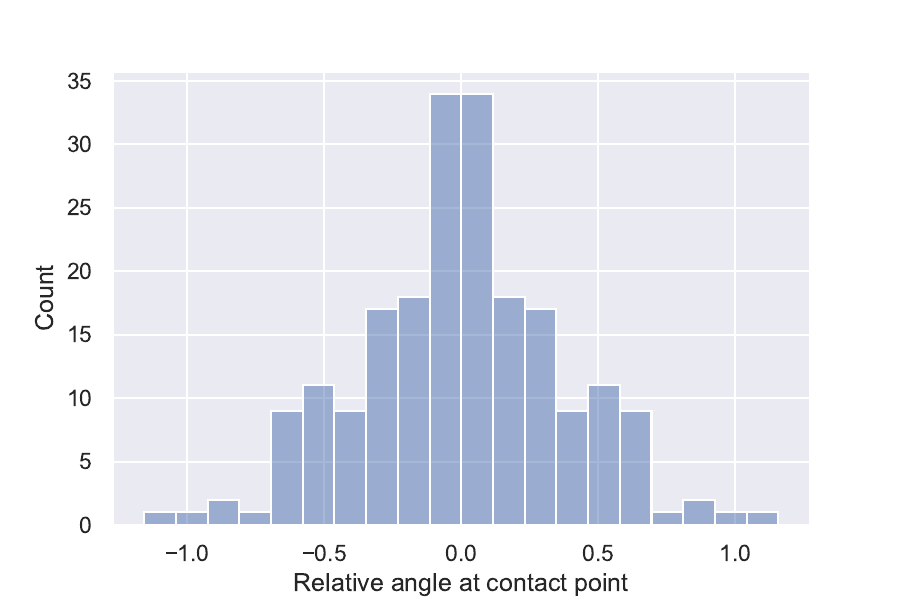}
\caption{Relative angle of the boundary data at the contact point for pairs of degree 3 particles.}
\label{fig.MHMC_33}\end{center}
\end{subfigure}
\caption{Results for a mixed system of degree 1 and degree 3 particles.}\label{fig.MHMC_mix}
\end{center}
\end{figure}

In Subfigure \ref{fig.MCMH_sol}, we see the configuration at the end of the simulation. By eye, we observe qualitatively the expected behaviour of neighbours, where degree 1 particles are separated due to repulsive interactions, neighbouring degree 3 particles tend to be at near-right-angles to each other, and degree 3 and degree 1 particles are roughly aligned along the illustrated diameter, whose end-points correspond to the regions of the surface with perpendicular director. Nonetheless, we observe that the particles are not so well-aligned as in the pure-state case. In particular, we see many triangles consisting of two degree 3 particles and one degree 1 particle, and geometrically such a triangle cannot be pairwise-minimising for the energy. We demonstrate the local orientational ordering graphically by considering the relative angles of the boundary director at the contact point of nearest neighbours in Subfigure \ref{fig.MHMC_13} for pairs of degree 1 and degree 3 particles, and in Subfigure \ref{fig.MHMC_33} for pairs of degree 3 particles, taken modulo $\pi$ in each case. We observe a central tendency at zero, but greater variation than in the case of pure systems. 

\section{Acknowledgements}
 The authors acknowledge the hospitality of HIM: Hausdorff Center for Mathematics where some of the research on this project was conducted during  the Trimester Program Mathematics for Complex Materials funded by the Deutsche Forschungsgemeinschaft
(DFG, German Research Foundation) under Germany Excellence Strategy – EXC-2047/1
– 390685813. DG was supported in part by the NSF grant DMS-2106551. R.V. was partially supported by the Simons Foundation (Award \# 733694) and an AMS-Simons travel award. He also acknowledges the hospitality provided by the Department of Mathematics at the University of Akron when R.V. visited D.G. to complete parts of this project. A.Z. has been partially supported by the Basque
Government through the BERC 2022-2025 program and by the Spanish State Research
Agency through BCAM Severo Ochoa excellence accreditation SEV-2017-0718 and through
project PID2020-114189RB-I00 funded by Agencia Estatal de Investigacion (PID2020-
114189RB-I00 / AEI / 10.13039/501100011033). A.Z. was also partially supported by a
grant of the Ministry of Research, Innovation and Digitization, CNCS - UEFISCDI, project
number PN-III-P4-PCE-2021-0921, within PNCDI III. 

{The authors would thank the anonymous referees for their helpful suggestions on our manuscript.}
	
\bigskip
\par\noindent{\bf\Large Declarations}

\smallskip
\par\noindent {\bf Conflict of interests} The authors have no competing interests to declare that are relevant to the content of this
article.

\par\noindent{\bf Data Availability Statement} The datasets generated during and/or analysed during the current study are available from the corresponding author on reasonable request.

 \appendix

 \vskip 5ex
\par\noindent{\bf\huge Appendix}	

	 	\section{Sobolev spaces and trace theory}
	\label{sec:Sobolev} 
	 There are various definitions for the norms of trace spaces of functions in $H^1(\Omega)$, which are equivalent for sufficiently regular $\Omega$ \cite{gagliardo1957caratterizzazioni}.  In this work we work with the following definitions.
	 
	 \begin{definition}
	 	Let $\Omega$ be a Lipschitz, possibly unbounded, domain with boundary $\partial\Omega$. We define $H^{\sfrac{1}{2}}(\partial\Omega)$ to be the range of the trace operator on $H^1(\Omega)$. For $u_0\in H^{\sfrac{1}{2}}(\partial\Omega)$, we define its norm as 
	 	\begin{align}\label{eq.def.h12}
	 		\|u_0\|_{H^{\sfrac{1}{2}}(\partial\Omega)}=\inf\left\{\|u\|_{H^1(\Omega)}:u|_{\partial\Omega}=u_0 \right\}.
	 	\end{align}
	 \end{definition}
	 It is then immediate that if $u\in H^1(\Omega)$, is the weak solution to  $\Delta u = u$ on $\Omega$, then $\|u\|_{H^1(\Omega)}=\left|\left|u|_{\partial\Omega}\right|\right|_{{H^{\sfrac{1}{2}}} (\partial\Omega)}$. We note that this definition is distinct from the typical one employing the Gagliardo (semi-)norm,
	 \begin{align*}
	 	\|u_0\|_{H^{\sfrac{1}{2}}_G(\partial\Omega)}^2=&\|u_0\|_{L^2(\partial\Omega)}^2+\int_{\partial\Omega}\int_{\partial\Omega} \frac{|u_0(x)-u_0(y)|^2}{|x-y|^2}\,dx\,dy,
	 \end{align*}
	 and instead corresponds to the interpretation of the trace space of $H^1(\Omega)$ as the quotient space $H^1(\Omega)/H^1_0(\Omega)$, where \eqref{eq.def.h12} corresponds to the induced norm on the quotient space. In the case of bounded and Lipschitz domains, these norms are known to be equivalent \cite{gagliardo1957caratterizzazioni}, however in the case of unbounded domains, relevant in this work, this appears to be a folklore theorem, so we include a proof for completeness. 
	 
\begin{proposition}
Let $\Omega\subset\mathbb{R}^n$ be an exterior domain, i.e., $\Omega^c$ is a bounded, Lipschitz domain.  Then $||\cdot||_{H^{\sfrac{1}{2}}_G(\Gamma)}\sim||\cdot||_{H^{\sfrac{1}{2}}(\partial\Omega)}$. 
\end{proposition}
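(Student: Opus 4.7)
The plan is to reduce the claim to the classical bounded-domain equivalence of \cite{gagliardo1957caratterizzazioni} by localizing near $\partial\Omega$. Since $\Omega^c$ is bounded Lipschitz, I would pick an open ball $V$ with $\overline{\Omega^c}$ compactly contained in $V$ and introduce the collar $W:=V\setminus \overline{\Omega^c}$. Then $W$ is a bounded Lipschitz domain whose boundary decomposes as $\partial W=\partial\Omega\sqcup\Gamma'$ with $\Gamma':=\partial V$, and crucially $\dist(\partial\Omega,\Gamma')\geqslant\delta>0$. On $W$ the quotient norm and the Gagliardo norm on $H^{1/2}(\partial W)$ are equivalent by the classical bounded-domain theorem, and I would use this as a black box throughout.

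For $\|u_0\|_{H^{1/2}_G(\partial\Omega)}\lesssim\|u_0\|_{H^{1/2}(\partial\Omega)}$, I would start from an arbitrary competitor $u\in H^1(\Omega)$ with $u|_{\partial\Omega}=u_0$, restrict it to $W$, and apply the classical trace inequality on $W$ to bound the Gagliardo seminorm of the full trace $u|_{\partial W}$ by $\|u|_W\|_{H^1(W)}\leqslant\|u\|_{H^1(\Omega)}$. Since the Gagliardo double integral is monotone under restriction of the integration domain to $\partial\Omega\times\partial\Omega\subset\partial W\times\partial W$, and the analogous monotonicity holds for the $L^2$ part, one obtains $\|u_0\|_{H^{1/2}_G(\partial\Omega)}\leqslant C\|u\|_{H^1(\Omega)}$; infimizing over $u$ yields the desired bound.

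For the converse, given $u_0\in H^{1/2}_G(\partial\Omega)$, I would extend it to $\tilde u_0$ on $\partial W$ by setting $\tilde u_0\equiv 0$ on $\Gamma'$. The cross Gagliardo contributions between $\partial\Omega$ and $\Gamma'$ involve the singular kernel evaluated at points whose mutual distance is at least $\delta$, and are therefore dominated by $C(\delta)\|u_0\|_{L^2(\partial\Omega)}^2$; hence $\tilde u_0\in H^{1/2}_G(\partial W)$ with $\|\tilde u_0\|_{H^{1/2}_G(\partial W)}\lesssim\|u_0\|_{H^{1/2}_G(\partial\Omega)}$. The classical equivalence on $W$ then delivers $v\in H^1(W)$ realizing $\tilde u_0$ as a trace on $\partial W$, with $\|v\|_{H^1(W)}\lesssim\|u_0\|_{H^{1/2}_G(\partial\Omega)}$. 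Since $v|_{\Gamma'}=0$ in the trace sense, extending $v$ by zero across $\Gamma'$ to $\mathbb{R}^n\setminus\overline V$ produces an admissible competitor $u\in H^1(\Omega)$ in the definition of the quotient norm, with $u|_{\partial\Omega}=u_0$ and $\|u\|_{H^1(\Omega)}=\|v\|_{H^1(W)}$, closing this inequality.

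The only analytically delicate step is the zero-extension across $\Gamma'$ in the last paragraph: one must verify that pasting $v$ and the zero function along the Lipschitz interface $\Gamma'$ produces a genuine $H^1$ function on $\Omega$. This is the standard fact that a function in $H^1(W)$ vanishing in trace on a Lipschitz portion of $\partial W$ extends by zero across that portion to an $H^1$ function on the enlarged domain; everything else—bounding cross-kernel contributions on separated sets, restricting Gagliardo integrals, and reading off the quotient norm inequality—is routine bookkeeping.
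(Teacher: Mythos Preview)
Your proposal is correct and follows essentially the same strategy as the paper: localize to the bounded collar $W=V\setminus\overline{\Omega^c}$ between $\partial\Omega$ and a larger sphere $\Gamma'$, extend the boundary datum by zero to $\Gamma'$, invoke the classical bounded-domain equivalence on $W$, and pass back to $\Omega$ by zero-extension across $\Gamma'$. The paper organizes this as a three-link chain of equivalences, while you prove the two inequalities directly; for the direction $\|u_0\|_{H^{1/2}_G}\lesssim\|u_0\|_{H^{1/2}}$ you simply restrict a competitor $u$ to $W$ and use monotonicity of the Gagliardo integral under restriction, whereas the paper multiplies by a cutoff supported in $V$ to force the trace on $\Gamma'$ to vanish---your route is slightly cleaner here, but both arguments are equivalent in spirit.
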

\begin{proof}

Take $B$ to be a disk such that $\Omega^c\subset\subset B$. We define $\hat{\Omega}=\Omega\cap B$, which is then a bounded, Lipschitz domain. Given $u_0\in H^\frac{1}{2}(\partial\Omega)$, define $Eu_0$ to be its extension by $0$ to $\partial\hat{\Omega}=\hat{\Gamma}$, so that $Eu_0|_{\partial\Omega}=u_0$ and $Eu_0|_{\partial B}=0$. Our proof strategy is to show the chain of equivalences, 
\begin{equation}
||u_0||_{H^{\sfrac{1}{2}}_G(\Gamma)} \sim ||Eu_0||_{H^{\sfrac{1}{2}}_G(\hat{\Gamma})}\sim ||Eu_0||_{H^{\sfrac{1}{2}}(\partial\hat{\Omega})}\sim ||u_0||_{H^{\sfrac{1}{2}}(\partial\Omega)},
\end{equation}
where $A\sim B$ implies the existence of some $C>1$ with $\frac{1}{C}A\leq B \leq CA$. 

First, we turn to $||u_0||_{H^{\sfrac{1}{2}}_G(\Gamma)} \sim ||Eu_0||_{H^{\sfrac{1}{2}}_G(\hat{\Gamma})}$. It is immediate, following the definition of the norm, that $||Eu_0||_{H^{\sfrac{1}{2}}_G(\hat{\Gamma})}\geq ||u_0||_{H^{\sfrac{1}{2}}_G(\Gamma)}$. To obtain the converse estimate, we note that 
\begin{equation}
\begin{split}
||Eu_0||_{H^{\sfrac{1}{2}}_G(\hat{\Gamma})}^2=&||u_0||_{L^2(\Gamma)}^2+\int_\Gamma\int_\Gamma \frac{|u(x)-u(y)|^2}{|x-y|^2}\,dx\,dy+2\int_\Gamma\int_{\partial B} \frac{|u(x)|^2}{|x-y|^2}\,dx\,dy\\
\leq &||u_0||_{L^2(\Gamma)}^2+\int_\Gamma\int_\Gamma \frac{|u(x)-u(y)|^2}{|x-y|^2}\,dx\,dy+\frac{2|\partial B|}{d(\partial B,\Gamma)^2}\int_{\Gamma}|u(x)|^2\,dx\\
\leq & \left(1+\frac{2|\partial B|}{d(\partial B,\Gamma)^2}\right)||u_0||_{H^{\sfrac{1}{2}}_G(\Gamma)}. 
\end{split}
\end{equation}

The relationship $||Eu_0||_{H^{\sfrac{1}{2}}_G(\hat{\Gamma})}\sim ||Eu_0||_{H^{\sfrac{1}{2}}(\partial\hat{\Omega})}$ is given in \cite{gagliardo1957caratterizzazioni}, as $\hat{\Omega}$ is a bounded, Lipschitz domain.

Finally, we demonstrate $||Eu_0||_{H^{\sfrac{1}{2}}(\partial\hat{\Omega})}\sim ||u_0||_{H^{\sfrac{1}{2}}(\partial\Omega)}$. For $u_0$ in the trace space of $H^1(\hat{\Omega})$ with $u_0|_{\partial B}=0$, there exists a minimiser for the infima that defines $||u_0||_{H^{\sfrac{1}{2}}(\partial\hat{\Omega})}$, which has trace equal to zero on $\partial B$. In particular, it may be extended by zero to give a $W^{1,2}(\Omega)$ function of equal $H^1$ norm, and may be used as a trial function for $||u_0||_{H^{\sfrac{1}{2}}(\partial{\Omega})}$. This implies that $||u_0||_{H^{\sfrac{1}{2}}(\partial\Omega)}\leq||Eu_0||_{H^{\sfrac{1}{2}}(\partial\hat{\Omega})}$.

For the converse estimate, let $\varphi\in C^\infty(\Omega)$ satisfy $\varphi=1$ in a vicinity of $\Gamma$, and $\text{supp }(\varphi)\subset\subset B$. Now for any $u\in H^1(\Omega)$ with trace $u_0$ on $\Gamma$, $\varphi u$ is an acceptable trial function for the minimisation problem defining $||Eu_0||_{H^{\sfrac{1}{2}}(\partial\hat{\Omega})}$. As $\varphi$ is smooth with compact support, however, this means that $||\varphi u||_{H^1(\hat{\Omega})}=||\varphi u||_{H^1(\Omega)}\leq C||u||_{H^1(\Omega)}$, where $C$ depends only on the $C^1$ norm of $\varphi$ and $B$. Thus $||Eu_0||_{H^{\sfrac{1}{2}}(\partial\hat{\Omega})}\leq C||u_0||_{H^{\sfrac{1}{2}}(\partial{\Omega})}$.

\end{proof}

	 \begin{definition}\label{def.normal.comp}
	 	The space $H^{-\sfrac{1}{2}}(\partial\Omega)$ is defined to be the dual space of $H^{\sfrac{1}{2}}(\partial\Omega)$. Furthermore, for any vector field $v\in L^2(\Omega)$ with $\text{div}(v)\in L^2(\Omega)$, we define the normal component of $v$ on $\partial\Omega$, $\nu\cdot v\in H^{-\sfrac{1}{2}}(\partial\Omega)$ via its action on elements $u_0\in {H^{\sfrac{1}{2}}}(\partial\Omega)$ as, with mild abuse of notation,
	 	\begin{align*}
	 		\int_{\partial\Omega}u_0 (v\cdot\nu)\,d\mathcal{H}^1= \int_{\Omega}\nabla u\cdot v+\text{div}(v)u\,dx,
	 	\end{align*}
	 	where $u\in H^1(\Omega)$ is any arbitrary extension of $u_0$.
	 \end{definition}
	 
	 \begin{proposition}\label{prop.normal.deriv.equal.h1}
	 	Let $Z\in H^1(\Omega)$ satisfy $\Delta Z = Z$ weakly. Then we define $\frac{\partial Z}{\partial\nu}\in H^{-\sfrac{1}{2}}(\partial\Omega) $ as $\nabla Z\cdot\nu$ according to Definition \ref{def.normal.comp}, which satisfies 
	 	\begin{align*}
	 		\Bigl\|\frac{\partial Z}{\partial \nu}\Bigr\|_{H^{-\sfrac12}(\partial \Omega)}=\|Z\|_{H^{1}(\Omega)}\,.
	 	\end{align*}
	 \end{proposition}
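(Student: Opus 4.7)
The plan is to use the identity
\[
\Bigl\langle \tfrac{\partial Z}{\partial \nu}, u_0 \Bigr\rangle = \int_\Omega \nabla u \cdot \nabla Z + u\, Z \,dx = \langle u, Z\rangle_{H^1(\Omega)}
\]
for any extension $u \in H^1(\Omega)$ of $u_0$, which follows by combining Definition~\ref{def.normal.comp} with $\Delta Z = Z$. First I would verify that this pairing is well-defined on the trace: if $u_1, u_2$ are two extensions of $u_0$, then $u_1 - u_2 \in H^1_0(\Omega)$, and the weak equation $\Delta Z = Z$ says exactly that $\langle Z, \varphi\rangle_{H^1(\Omega)} = 0$ for every $\varphi \in H^1_0(\Omega)$, so $\langle u_1, Z\rangle_{H^1} = \langle u_2, Z\rangle_{H^1}$.

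Next, for the upper bound, I would compute
\[
\Bigl\|\tfrac{\partial Z}{\partial \nu}\Bigr\|_{H^{-1/2}(\partial\Omega)} = \sup_{\|u_0\|_{H^{1/2}}\leq 1} |\langle u, Z\rangle_{H^1(\Omega)}| \leq \sup_{\|u_0\|_{H^{1/2}}\leq 1} \|u\|_{H^1(\Omega)} \|Z\|_{H^1(\Omega)},
\]
where $u$ may be taken to be any extension. Taking the infimum over extensions on the right and invoking the definition \eqref{eq.def.h12} of $\|u_0\|_{H^{1/2}(\partial\Omega)}$ gives the desired upper bound $\|Z\|_{H^1(\Omega)}$.

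For the matching lower bound, the key observation is that $Z$ itself is the norm-minimising extension of its own trace. Indeed, any competitor $v$ with $v|_{\partial\Omega} = Z|_{\partial\Omega}$ satisfies $v - Z \in H^1_0(\Omega)$, and by the orthogonality $\langle Z, v-Z\rangle_{H^1} = 0$ we obtain $\|v\|_{H^1}^2 = \|Z\|_{H^1}^2 + \|v-Z\|_{H^1}^2 \geq \|Z\|_{H^1}^2$. Hence $\|Z|_{\partial\Omega}\|_{H^{1/2}(\partial\Omega)} = \|Z\|_{H^1(\Omega)}$. Testing the supremum against this particular $u_0 = Z|_{\partial\Omega}$ (using $u = Z$ as the extension) yields
\[
\Bigl\|\tfrac{\partial Z}{\partial \nu}\Bigr\|_{H^{-1/2}(\partial\Omega)} \geq \frac{|\langle Z, Z\rangle_{H^1(\Omega)}|}{\|Z|_{\partial\Omega}\|_{H^{1/2}(\partial\Omega)}} = \frac{\|Z\|_{H^1(\Omega)}^2}{\|Z\|_{H^1(\Omega)}} = \|Z\|_{H^1(\Omega)},
\]
which completes the proof. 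There is no real obstacle here; the only subtle point is the observation that the weak PDE $\Delta Z = Z$ is equivalent to $H^1$-orthogonality of $Z$ against $H^1_0(\Omega)$, which simultaneously makes the dual pairing well-defined and identifies $Z$ as the minimal-norm extension of its trace.
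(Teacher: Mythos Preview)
Your proof is correct and follows essentially the same approach as the paper: reduce the dual pairing to the $H^1$ inner product via $\Delta Z = Z$, use Cauchy--Schwarz for the upper bound, and test against $u_0 = Z|_{\partial\Omega}$ (equivalently $u = Z/\|Z\|_{H^1}$) for equality. The paper is slightly more compact---it passes directly to extensions $u$ satisfying $\Delta u = u$ with $\|u\|_{H^1}\le 1$ rather than separating the two inequalities---but your explicit verification that $Z$ is $H^1$-orthogonal to $H^1_0(\Omega)$ and hence the minimal-norm extension of its own trace is exactly the fact underlying that step.
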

	 \begin{proof}
	 	We turn directly to the definition of the dual norm and the normal derivative and see that 
	 	\begin{align*}
	 		\Bigl\|\frac{\partial Z}{\partial \nu}\Bigr\|_{H^{-\sfrac12}(\partial \Omega)} =& \sup\limits_{\substack{u_0\in {H^{\sfrac{1}{2}}}(\partial\Omega)\\ \|u_0\|_{{H^{\sfrac{1}{2}}}(\partial\Omega)}\leq 1 }}\int_{\partial\Omega} u_0\frac{\partial Z}{\partial\nu}\,d\mathcal{H}^1\\
	 		=& \sup\limits_{\substack{u\in H^1(\Omega), \Delta u = u \\ \|u\|_{H^1(\Omega)}\leq 1}}\int_{\Omega} \nabla u\cdot \nabla Z+\text{div}(\nabla Z)u\,dx\\
	 		=& \sup\limits_{\substack{u\in H^1(\Omega), \Delta u = u \\ \|u\|_{H^1(\Omega)}\leq 1}}\int_{\Omega} \nabla u\cdot \nabla Z+Zu\,dx\\
	 		=&\sup\limits_{\substack{u\in H^1(\Omega), \Delta u = u \\ \|u\|_{H^1(\Omega)}\leq 1}}\langle u,Z\rangle_{H^1(\Omega)}=\|Z\|_{H^1(\Omega)},
	 	\end{align*}
	 	since, by Cauchy-Schwarz, we see that $u =\frac{Z}{\|Z\|_{H^1(\Omega)}}$ is admissible, and attains the supremum.
	  \end{proof}

   \section{The case of a single particle} \label{ss.single}
	 In order to obtain an expression for the energy in this setting, we first fix $m \in \mathbb{Z}.$ We first compute the contribution to the self-energy associated with the $m$th mode. To be precise, 
	 \begin{lemma} \label{l.singleKm}
	 	Define $\Phi_m$ to be the solution to 
	 	\begin{equation} \label{e.phimeq}
	 		\begin{aligned}
	 			&\Delta \Phi_m = \frac{1}{\e^4} \Phi_m, \quad \quad x\in\RR^2 \setminus B(0,r_1),\\
	 			& \Phi_m(x) = e^{im\theta}, \quad \quad |x| = r_1.
	 		\end{aligned}
	 	\end{equation}
	 	Then, 
	 	\begin{align}
	 		\int_{\RR^2 \setminus B(0,r_1)} \left( |\nabla \Phi_m|^2 + \frac{1}{\e^4}|\Phi_m|^2 \right)\,dx = -\frac{2\pi r_1}{\e^2} \frac{K_{m}^\prime \big( \frac{r_1}{\e^2} \big) }{K_m(\frac{r_1}{\e^2})},
	 	\end{align}
   where $K_m$ is the modified Bessel function of the second kind and order $m$ (see Appendix~\ref{s.app1}).
	 \end{lemma}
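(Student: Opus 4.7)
The plan is a direct separation of variables in polar coordinates followed by an integration-by-parts computation, so the structure is essentially the standard self-energy calculation done for $m=0$ in the body of the paper (compare with the derivation surrounding \eqref{eq:psi} and the first term of \eqref{e.diag}), but now carried out for arbitrary $m \in \mathbb{Z}$.

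First, I would solve the PDE explicitly. Writing $\Phi_m(r,\theta) = f(r) e^{im\theta}$, the equation $\Delta \Phi_m = \e^{-4}\Phi_m$ becomes, after using the expression of $\Delta$ in polar coordinates and dividing out by $e^{im\theta}$, the radial equation
\begin{equation*}
f''(r) + \frac{1}{r}f'(r) - \left(\frac{m^2}{r^2} + \frac{1}{\e^4}\right) f(r) = 0\,, \qquad r > r_1.
\end{equation*}
Setting $s = r/\e^2$ turns this into the standard modified Bessel equation of order $m$, whose two linearly independent solutions are $I_m(s)$ and $K_m(s)$. Since $I_m$ grows exponentially at infinity, only solutions proportional to $K_m(r/\e^2)$ lie in $H^1$ of the exterior domain; imposing the boundary condition $f(r_1) = 1$ therefore forces
\begin{equation*}
\Phi_m(r,\theta) = \frac{K_m(r/\e^2)}{K_m(r_1/\e^2)}\,e^{im\theta}.
\end{equation*}

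Next, I would compute the energy by integration by parts. Using that $\Phi_m$ decays exponentially at infinity (together with its gradient), Green's first identity applied on $\R^2 \setminus B(0,r_1)$ with outward unit normal $\nu = -\hat r$ gives
\begin{equation*}
\int_{\R^2 \setminus B(0,r_1)} |\nabla \Phi_m|^2\,dx = -\int_{\R^2 \setminus B(0,r_1)} \overline{\Phi_m}\,\Delta \Phi_m\,dx + \int_{\partial B(0,r_1)} \overline{\Phi_m}\,\frac{\partial \Phi_m}{\partial \nu}\,d\mathcal{H}^1.
\end{equation*}
Plugging in the PDE $\Delta \Phi_m = \e^{-4}\Phi_m$ cancels the bulk term against the $L^2$ contribution, leaving
\begin{equation*}
\int_{\R^2\setminus B(0,r_1)} \Bigl(|\nabla \Phi_m|^2 + \tfrac{1}{\e^4}|\Phi_m|^2\Bigr)\,dx = \int_{\partial B(0,r_1)} \overline{\Phi_m}\,\frac{\partial \Phi_m}{\partial \nu}\,d\mathcal{H}^1.
\end{equation*}

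Finally, I would evaluate the boundary integral explicitly. On $\{|x|=r_1\}$ one has $\overline{\Phi_m} = e^{-im\theta}$, and since $\partial/\partial \nu = -\partial/\partial r$ (the normal points inward toward the origin from the perspective of the exterior domain),
\begin{equation*}
\frac{\partial \Phi_m}{\partial \nu}\bigg|_{r=r_1} = -\frac{1}{\e^2}\,\frac{K_m'(r_1/\e^2)}{K_m(r_1/\e^2)}\,e^{im\theta}.
\end{equation*}
The angular factors multiply to $1$, and using $d\mathcal{H}^1 = r_1\,d\theta$ on $\partial B(0,r_1)$ yields
\begin{equation*}
\int_{\partial B(0,r_1)} \overline{\Phi_m}\,\frac{\partial \Phi_m}{\partial \nu}\,d\mathcal{H}^1 = -\frac{2\pi r_1}{\e^2}\,\frac{K_m'(r_1/\e^2)}{K_m(r_1/\e^2)},
\end{equation*}
which is the claimed identity.

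There is no real obstacle here: the only point requiring a word of care is the selection of the decaying Bessel solution $K_m$ over $I_m$, which is forced by $H^1$-integrability on the unbounded exterior (this is why the energy integral is finite in the first place and justifies dropping boundary terms at infinity in the integration by parts). The rest is bookkeeping with orientation of the normal and a trivial $\theta$-integration.
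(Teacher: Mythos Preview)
Your proposal is correct and follows essentially the same approach as the paper: separate variables to obtain $\Phi_m(r,\theta) = \dfrac{K_m(r/\e^2)}{K_m(r_1/\e^2)}e^{im\theta}$ (discarding the exponentially growing $I_m$), then integrate by parts to reduce the energy to a boundary integral and evaluate it explicitly. Your treatment of the sign convention for the outward normal and the use of $\overline{\Phi_m}$ in the complex pairing are in fact cleaner than the paper's own presentation.
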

	 \begin{proof}
	 	The proof is by construction of a radial profile. Specifically, we seek $\Phi_m (x) := f_m(r) e^{im\theta},$ with $f_m(r) = 1$ when $r = r_1.$ Then $f_m$ solves the ODE 
	 	\begin{equation}
	 		f_m^{\prime \prime} + \frac{1}{r} f_m^\prime - \frac{m^2}{r^2} f_m = \frac{1}{\e^4} f_m, \quad \quad f_m(r_1) = 1. 
	 	\end{equation}
	 	Then, arguing as before and rescaling, it is easy to see that the solution that decays at infinity is given by 
	 	\begin{align*}
	 		f_m(r) = \frac{K_m \big( \frac{r}{\e^2}\big)}{K_m \big( \frac{r_1}{\e^2} \big)}. 
	 	\end{align*}
	 	In particular, by the strict convexity of the energy, and the associated uniqueness for \eqref{e.phimeq}, we conclude that 
	 	\begin{align*}
	 		\Phi_m(x) = \frac{K_m \big( \frac{|x|}{\e^2}\big)}{K_m \big( \frac{r_1}{\e^2} \big)} e^{im\theta}.
	 	\end{align*}
	 	The energy of this function is then easily computed: using the divergence theorem, we write 
	 	\begin{equation}
	 		\begin{aligned}
	 			&\int_{\RR^2 \setminus B(0,r_1)} \left( |\nabla \Phi_m|^2 + \frac{1}{\e^4}|\Phi_m|^2 \right)\,dx  = -\int_{\partial B(0,r_1)} \Phi_m \cdot {\frac{\partial \Phi_m}{\partial \nu}} \,d \sh^1\\
	 			&\quad = \frac{2\pi r_1}{\e^2} \frac{K_m^\prime \left( \frac{r_1}{\e^2} \right)}{K_m \left( \frac{r_1}{\e^2}\right)}\,.
	 		\end{aligned}
	 	\end{equation}
	 The proof of the lemma is complete. 
	 \end{proof}

	\section{Estimates of modified Bessel functions of the second kind} \label{s.app1}
	For each $m \in \N,$ the homogeneous ordinary differential equation 
	\begin{equation*}
		t^2 u^{\prime \prime} + t u^\prime - (t^2 + m^2)u = 0\,, \quad t > 0\,,
	\end{equation*}
has two linearly independent solutions: $I_m$ and $K_m.$ The former, $I_m$ the modified Bessel function of the first kind, is exponentially growing, and is not used in the sequel, while the latter, $K_m$, the modified Bessel function of the second kind, is exponentially decaying. In this appendix, we summarize certain estimates on these functions in the form that we will need them. 

\begin{lemma}
	\label{l.besselfct1}
	Let $m \in \Z$ be fixed and $R > 1$. Then, there exists a constant $C> 0$ independent of $m$ and $R$, such that  the Bessel function $K_m$ satisfies the following pointwise estimate that for all $t \in [R,2R]:$ 
	\begin{equation*}
		\Bigl|\frac{K_m(t)}{K_m(R)} - \sqrt{\frac{R}{t}} e^{-(t-R)} \Bigr| \leqslant  \frac{C(1+|m|)}{R}\,.
	\end{equation*}
\end{lemma}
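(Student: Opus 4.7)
The plan is to start from the classical Poisson--type integral representation
\[
K_m(t) = \sqrt{\tfrac{\pi}{2t}}\,e^{-t}\,F_m(t), \qquad F_m(t) := \frac{1}{\Gamma(m+\tfrac12)} \int_0^\infty e^{-v}\, v^{m-1/2}\, \Bigl(1+\tfrac{v}{2t}\Bigr)^{m-1/2}\, dv,
\]
valid for $m > -\tfrac12$; by the symmetry $K_{-m}=K_m$ one may restrict to $m \geqslant 0$. With this representation the difference we need to bound factors cleanly as
\[
\frac{K_m(t)}{K_m(R)} - \sqrt{\tfrac{R}{t}}\,e^{-(t-R)} = \sqrt{\tfrac{R}{t}}\,e^{-(t-R)} \Bigl[\, \frac{F_m(t)}{F_m(R)} - 1 \,\Bigr],
\]
and since $\sqrt{R/t}\,e^{-(t-R)} \leqslant 1$ on $[R,2R]$, the task reduces to bounding $|F_m(t)/F_m(R) - 1|$ by $C(1+|m|)/R$.

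To accomplish this I would apply the mean value theorem to the map $\sigma \mapsto (1+v\sigma)^{m-1/2}$ between $\sigma = 1/(2R)$ and $\sigma = 1/(2t)$, producing an intermediate value $\xi$ and the identity
\[
(1+v/(2t))^{m-1/2} - (1+v/(2R))^{m-1/2} = (m-\tfrac12)\,v\,(1+v\xi)^{m-3/2}\Bigl(\tfrac{1}{2t} - \tfrac{1}{2R}\Bigr).
\]
For $m \geqslant 3/2$, $\xi \leqslant 1/(2R)$ gives $(1+v\xi)^{m-3/2} \leqslant (1+v/(2R))^{m-3/2}$, so after integrating against $e^{-v}v^{m-1/2}$ and dividing by $F_m(R)$ I expect to arrive at
\[
\frac{|F_m(t)-F_m(R)|}{F_m(R)} \leqslant \frac{(m-\tfrac12)\,|t-R|}{2tR}\cdot \mathbb{E}\!\left[\frac{v}{1+v/(2R)}\right],
\]
where the expectation is taken with respect to the probability measure proportional to $e^{-v}v^{m-1/2}(1+v/(2R))^{m-1/2}\,dv$ on $(0,\infty)$. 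The key observation is that $v/(1+v/(2R)) \leqslant 2R$ uniformly in $v \geqslant 0$, so this expectation is bounded by $2R$; combining with $\sqrt{R/t}\,e^{-(t-R)}\,|t-R| \leqslant C$ on $[R,2R]$ should then yield the desired bound $Cm/R$. For the small range $0 \leqslant m < 3/2$ I would instead use the crude estimate $(1+v\xi)^{m-3/2}\leqslant 1$ together with a positive lower bound on $F_m(R)$ (immediate for $m\geqslant 1/2$ since $F_m(R)\geqslant 1$, and obtained for $m=0$ by splitting the integral near $v=0$), giving $|F_m(t)/F_m(R)-1|\leqslant C/R$.

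The main obstacle I anticipate is the sharpness of the $m$-dependence. A naive use of the formal asymptotic series $F_m(t) = 1 + (m^2-\tfrac14)/(2t) + \cdots$ would only produce a bound of order $m^2/R^2$, which fails in the intermediate regime $\sqrt{R}\lesssim m\lesssim R$ where this is no longer controlled by $m/R$. The uniform truncation $v/(1+v/(2R))\leqslant 2R$ is precisely what I am counting on to trade one apparent power of $m^2/R$ against the $R$-growth of the truncated quantity, thereby recovering the linear-in-$m$ behaviour claimed. Finally, in the complementary regime $|m|>R$ the estimate is trivial: both $K_m(t)/K_m(R)$ and $\sqrt{R/t}\,e^{-(t-R)}$ lie in $[0,1]$ on $[R,2R]$, so the left-hand side is bounded by $2$, which is dominated by $C(1+|m|)/R$ in this range. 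Assembling the three regimes $m<3/2$, $3/2\leqslant m\leqslant R$, and $m>R$ will complete the proof.
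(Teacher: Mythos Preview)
Your argument is correct. The paper itself does not give a proof at all: its ``proof'' of Lemma~\ref{l.besselfct1} is a one-line reference to the asymptotic expansions in Abramowitz--Stegun, Section~10.40(i). So your self-contained treatment via the Poisson integral representation
\[
K_m(t)=\sqrt{\tfrac{\pi}{2t}}\,e^{-t}F_m(t)
\]
supplies considerably more than the paper does. The factorisation isolating $F_m(t)/F_m(R)-1$, the mean-value step, and especially the observation $v/(1+v/(2R))\leqslant 2R$ (which is what trades the naive $m^2/R$ bound down to $m/R$) are all sound; combined with $e^{-(t-R)}(t-R)\leqslant 1/e$ and $t\geqslant R$ they deliver exactly $C m/R$ in the main regime. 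The endpoint cases $0\leqslant m<3/2$ and $|m|>R$ are handled correctly.

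As a curiosity: the \LaTeX\ source contains, after \verb|\end{document}|, an abandoned proof that takes a completely different route. There one sets $v(t)=\sqrt{\pi/(2t)}\,e^{-t}$, notes that $w:=K_m-v$ solves $t^2w''+tw'-(t^2+m^2)w=(m^2-\tfrac14)v$, and runs a weighted $L^2$ energy estimate on dyadic intervals with a cutoff, followed by Morrey to pass to pointwise control. That method works at the level of $K_m$ rather than the ratio $K_m(t)/K_m(R)$, is more robust to perturbations of the equation, but is longer and needs an extra step to remove the weights. Your integral-representation approach is more direct for the precise statement at hand.
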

\begin{proof}
    We refer the reader to~\cite{AS}. 
\end{proof}
{We also repeatedly used the following relations satisfied by the modified Bessel functions:
\begin{equation} \label{e.besselderivatives}
    K_m' = -\frac12(K_{m-1} + K_{m+1})\,, \quad m \in \NN; \quad K_0' = - K_1\, , 
\end{equation}
 {
and the large-argument asymptotics that, for each $m\in\mathbb{Z}$ there exists $C>0$ with 
\begin{equation}\label{eq.besseldecay}
    K_m(x)\leq \frac{C}{\sqrt{x}}\exp(-x)
\end{equation}
for sufficiently large $x$. }
The proofs of these results are standard and can be found in any book on special functions (e.g.~\cite{AS}). 
}

\section{ A brief introduction to the Landau-de Gennes model}
\label{sec:LDGmodel}

The main characteristic feature of the nematic liquid crystals is the local preferred orientation of the rod-like molecules. A comprehensive way of modeling this is through a a probability measure  $\mu(x,\cdot):\mathcal{L}(\mathbb{S}^2)\to\mathbb
[0,1]$ for 	 each material point $x$ in the region $\Omega$ occupied
by the liquid crystal. Thus $\mu(x,A)$ assigns  a number between $0$ and $1$ denoting the probability that the molecules with centre of mass in a very small neighborhood of  the point $x\in\Omega$ are pointing in a direction contained in $A\subset
\mathbb{S}^2$.

The significant numerical and analytical challenges associated generated by dealing with parametrised probability measures
have lead Pierre Gilles de Gennes in the 70s to propose replacing the probability measure by one of its moments. Due to the physical head-to-tail symmetry of the molecules the first order moment vanishes (see for details \cite{ball2011orientability,mottram2014introduction}).  Thus the  first nontrivial information on $\mu$ comes from the
tensor of second moments:
$$M_{ij}\stackrel{\rm{def}}{=}\int_{\mathbb{S}^2}p_ip_j\,d\mu(p),\,i,j=1,2,3.$$
We have $M=M^T$ and
$\textrm{tr}\,M=\int_{\mathbb{S}^2}d\mu(p)=1$. \par If the orientation of the molecules is equally distributed in
all directions we say that the distribution is {\it isotropic} and
then $\mu=\mu_0$ where $d\mu_0(p)=\frac{1}{4\pi}dA$. The
corresponding second moment tensor is
$$M_0\stackrel{\rm{def}}{=}\frac{1}{4\pi}\int_{\mathbb{S}^2} p\otimes
p\,dA=\frac{1}{3}Id$$ (since $\int_{\mathbb{S}^2}
p_1p_2\,d\mu(p)=0,\,\int_{\mathbb{S}^2}p_1^2\,d\mu(p)=\int_{\mathbb{S}^2}p_2^2\,d\mu(p)=\int_{\mathbb{S}^2}p_3^2\,d\mu(p)$
and $\textrm{tr}\,M_0=1$).
\par The de Gennes order-parameter tensor $Q$ is defined as

\begin{equation}
Q\stackrel{\rm{def}}{=}M-M_0=\int_{\mathbb{S}^2}\left(p\otimes
p-\frac{1}{3}Id\right)\,d\mu(p)\label{defq}
\end{equation} and measures the deviation of the second moment
tensor from its isotropic value.

By extension we call a $Q$-tensor any symmetric, traceless, three-by-three real-valued matrix and denote the space of such $Q$-tensors by $\mathcal{S}_0$. The 
configuration of the nematic material is then described by maps $Q:\Omega\to \mathcal{S}_0$. The simplest theory that produces physically meaningful predictions is a variational one. In it equilibrium configurations of liquid crystals are obtained, for instance, as
energy minimizers, subject to suitable boundary conditions. The simplest commonly used energy functional is
\begin{equation}
\label{energy}
 \mathcal{F}_{LG}[Q]=\int_{\Omega}\left[ \frac{L}{2}\sum_{i,j,k=1}^3 Q_{ij,k}Q_{ij,k}+\frac{a}{2}\textrm{tr}\,Q^2 +
\frac{ b}{3}\textrm{tr}\,Q^3 +\frac{ c}{4}\left(\textrm{tr}\,Q^2\right)^2\right]\,dx
\end{equation} where $a,b,c$ are temperature and material dependent constants and $L>0$ is the elastic constant. The ``elastic part" $\frac{L}{2}\sum_{i,j,k=1}^3 Q_{ij,k}Q_{ij,k}$ models the spatial variations of the material while the ``bulk term" $f_B(Q)=\frac{a}{2}\textrm{tr}\,Q^2 +
\frac{ b}{3}\textrm{tr}\,Q^3 +\frac{ c}{4}\left(\textrm{tr}\,Q^2\right)^2$ models the phase transition from the isotropic state (no local preferred orientation of the molecules) to the nematic state of material. 

The bulk term is required to respect physical invariances of the material and thus can only be a function of $\textrm{tr}(Q^2)$ and $\textrm{tr}(Q^3)$. Following Landau's intuition it a polynomial chosen to be of the lowest possible order such that the mathematical predictions match the physical ones. Out of the three coefficients only $a$ depends on the temperature and varying $a$ provides different types of minimisers (see \cite{mottram2014introduction}, Section $II.A$ for details) with a  negative enough $a$ giving a nematic-type minimiser, that is an element in the set $\{s_+(a,b,c)\left(n\otimes n-\frac{1}{3}Id\right); n\in\mathbb{S}^2\}$ with $s_+(a,b,c)$ an explicitly computable scalar and $Id$ the three-by-three identity matrix. We will be interested in the paranematic situation when the parameter 
$a$ positive and large enough provides  a zero Q-tensor as minimiser for $f_B(Q)$. It should be noted that in this setting  the bulk term behaves qualitatively  as a perturbation of the quadratic term so it is expected, as in \cite{galatola2003interaction} for instance, that replacing $f_B(Q)$ by  the quadratic $ g(Q)=\textrm{tr}(Q^2)$. In this case the different components of the $Q$-tensor are not coupled hence problem can be reduced to independent scalar problems as will be the focus of most of the paper.

	\bibliographystyle{acm}
	\bibliography{ref}
	
\end{document}